\definecolor{dark-red}{rgb}{0.5,0.15,0.15}
\definecolor{dark-blue}{rgb}{0.15,0.15,0.6}
\definecolor{dark-green}{rgb}{0.15,0.6,0.15}
\numberwithin{equation}{section}
\newtheorem{Thm}[equation]{Theorem}
\newtheorem*{Thm*}{Theorem}
\newtheorem*{MainThm*}{Main Theorem}
\newtheorem{Prop}[equation]{Proposition}
\newtheorem{Lem}[equation]{Lemma}
\newtheorem{Cor}[equation]{Corollary}
\newtheorem*{Que*}{Question}
\theoremstyle{remark}
\newtheorem{Def}[equation]{Definition}
\newtheorem{Ter}[equation]{Terminology}
\newtheorem{Not}[equation]{Notation}
\newtheorem{Exa}[equation]{Example}
\newtheorem{Cons}[equation]{Construction}
\newtheorem{Rem}[equation]{Remark}
\tikzset{
    labelrotatebelow/.style={anchor=north, rotate=90, inner sep=1.0mm}
}
\tikzset{
    labelrotateabove/.style={anchor=south, rotate=90, inner sep=1.0mm}
}
\newcommand{\nc}{\newcommand}
\nc{\dmo}{\DeclareMathOperator}
\nc{\Niko}[1]{{\color{Orange}#1}}
\nc{\Maxime}[1]{{\color{Green}#1}}
\nc{\Luca}[1]{{\color{Blue}#1}}
\nc{\overbar}[1]{\mkern 1.5mu\overline{\mkern-1.5mu#1\mkern-1.5mu}\mkern 1.5mu}
\nc{\kappaaux}{g}
\nc{\kappaCh}{{\kappaaux(\cat C_h)}}
\nc{\kappam}{{\kappaaux({\mathfrak m})}}
\nc{\kappaP}{\Gamma_{\cat P}\unit}
\nc{\kappaQ}{{\kappaaux(\cat Q)}}
\nc{\kappaCP}{{\kappaaux_{\cat C}(\cat P)}}
\nc{\kappaDP}{{\kappaaux_{\cat D}(\cat P)}}
\nc{\kappaCQ}{{\kappaaux_{\cat C}(\cat Q)}}
\nc{\kappaDQ}{{\kappaaux_{\cat D}(\cat Q)}}
\nc{\kappaphiB}{{\kappaaux(\phi(\cat B))}}
\nc{\kappaphiQ}{{\kappaaux(\varphi(\cat Q))}}
\dmo{\Sub}{Sub}
\dmo{\Proj}{Proj}
\dmo{\LMod}{LMod}
\dmo{\cell}{cell}
\nc{\SpEn}{\cat S_{E(n)}}
\nc{\SpEnf}{\cat S_n}
\nc{\Lcomp}{L^{\mathrm{com}}} 
\nc{\Ucomp}{U^{\mathrm{com}}}
\nc{\Loco}[1]{\Loc_{\otimes}\hspace{-0.3ex}\langle #1 \rangle}
\nc{\bbullet}{{\scriptscriptstyle\hspace{-1pt}\bullet}}
\nc{\bullett}{{\scriptscriptstyle\bullet}\hspace{-1pt}}
\nc{\LF}{L\hspace{-0.2ex}F}
\nc{\SpG}{\Sp^G}
\nc{\EG}{\bbE_G}
\nc{\DEG}{\Der(\EG)}
\nc{\DE}{\Der(\bbE)}
\nc{\Prst}{{\cat P}\mathrm{r^{st}}}
\nc{\Mack}[2]{\mathrm{Mack}_{#1}(#2)}
\nc{\SC}{S\cat C}
\dmo{\fin}{{fin}}
\dmo{\DM}{DM}
\dmo{\fp}{fp}
\nc{\DMQ}{\DM_Q}
\dmo{\DerKal}{DMack}
\dmo{\Der}{D}
\dmo{\DMot}{DMot}
\dmo{\rmH}{H}
\dmo{\piu}{\underline{\pi}}
\dmo{\Sphere}{\mathbb{S}}
\dmo{\Alg}{Alg}
\dmo{\CAlg}{CAlg}
\nc{\HA}{{\rmH \hspace{-0.2em}\bbA}}
\nc{\HZ}{{\rmH \hspace{-0.2em}\bbZ}}
\nc{\HZbar}{{\rmH \hspace{-0.2em}\underline{\bbZ}}}
\nc{\Fp}{{\bbF_{\hspace{-0.1em}p}}}
\nc{\HFp}{{\rmH \hspace{-0.15em}\bbF_{\hspace{-0.1em}p}}}
\nc{\DHZG}{\Der(\HZ_G)}
\nc{\DHZH}{\Der(\HZ_H)}
\nc{\DHZK}{\Der(\HZ_K)}
\nc{\DHZGN}{\Der(\HZ_{G/N})}
\nc{\DHZGG}{\Der(\HZ_{G/G})}
\nc{\DHZCp}{\Der(\HZ_{C_p})}
\nc{\DHZGprime}{\Der(\HZ_{G'})}
\nc{\DHZ}{\Der(\HZ)}
\nc{\mathfrakp}{\mathfrak{p}}
\nc{\mathfrakq}{\mathfrak{q}}
\nc{\mathfrakS}{\mathfrak{S}}
\nc{\mathfrakT}{\mathfrak{T}}
\nc{\Z}{\mathbb{Z}}
\nc{\SSG}{\text{sSet}_*^G}
\nc{\sSet}{\text{sSet}}
\dmo{\csupp}{csupp}
\dmo{\Con}{Conj}
\dmo{\Id}{Id}
\dmo{\Loc}{Loc}
\dmo{\rmK}{\textrm{\rm K}}
\dmo{\Spc}{Spc}
\dmo{\thick}{thick}
\nc{\thickt}[1]{\thick_\otimes\langle #1 \rangle}
\nc{\loct}[1]{\loc_\otimes\langle #1 \rangle}
\dmo{\cone}{cone}
\dmo{\End}{End}
\dmo{\Derperf}{D_{perf}}
\dmo{\Mor}{Mor}
\dmo{\Hom}{Hom}
\dmo{\id}{id}
\dmo{\incl}{incl}
\dmo{\Img}{Im}
\dmo{\im}{im}
\dmo{\Ker}{Ker}
\dmo{\ind}{ind}
\dmo{\Ind}{Ind}
\dmo{\CoInd}{coind}
\dmo{\res}{res}
\dmo{\infl}{infl}
\dmo{\Derqc}{D_{qc}}
\dmo{\triv}{triv}
\dmo{\sep}{sep}
\dmo{\Tel}{Tel} 
\dmo{\grMod}{grMod}%
\dmo{\Mod}{Mod}%
\dmo{\opname}{op}
\dmo{\SH}{SH}
\dmo{\smallb}{b}
\dmo{\Spec}{Spec}
\dmo{\supp}{supp}
\dmo{\Supp}{Supp}
\dmo{\cosupp}{cosupp}
\dmo{\Cosupp}{Cosupp}
\nc{\SHc}{{\SH^c}}
\nc{\SHp}{{\SH_{(p)}}}
\nc{\SHcp}{{\SH^c_{(p)}}}
\nc{\SHG}{\SH(G)}
\nc{\SHGp}{\SH(G)_{(p)}}
\nc{\SHGc}{\SHG^c}
\nc{\SHGcp}{\SHG^c_{(p)}}
\nc{\quadtext}[1]{\quad\textrm{#1}\quad}
\nc{\qquadtext}[1]{\qquad\textrm{#1}\qquad}
\nc{\adj}{\dashv}
\nc{\adjto}{\rightleftarrows}
\nc{\bbL}{\mathbb{L}}
\nc{\bbA}{\mathbb{A}}
\nc{\bbE}{\mathbb{E}}
\nc{\bbN}{\mathbb{N}}
\nc{\bbQ}{\mathbb{Q}}
\nc{\bbZ}{\mathbb{Z}}
\nc{\bbF}{\mathbb{F}}
\nc{\cat}[1]{\mathscr{#1}}
\nc{\ie}{{\sl i.e.}, }
\nc{\into}{\mathop{\rightarrowtail}}
\nc{\inv}{^{-1}}
\nc{\isoto}{\mathop{\overset{\sim}\to}}
\nc{\isotoo}{\mathop{\overset{\sim}\too}}
\nc{\onto}{\mathop{\twoheadrightarrow}}
\nc{\too}{\mathop{\longrightarrow}\limits}
\nc{\mapstoo}{\longmapsto}
\nc{\adh}[1]{\overline{#1}}
\nc{\adhpt}[1]{\adh{\{#1\}}}
\nc{\aka}{{a.\,k.\,a.}\ }
\nc{\calF}{\cat F}
\nc{\eg}{{\sl e.\,g.}}
\nc{\Homcat}[1]{\Hom_{\cat #1}}
\nc{\hook}{\hookrightarrow}
\nc{\ideal}[1]{\langle #1\rangle}
\nc{\ihom}{{\underline{\hom}}}
\nc{\iHom}{\underline{\mathrm{Hom}}}
\nc{\Mid}{\,\big|\,}
\nc{\MMod}{\,\text{-}\Mod}%
\nc{\GrMMod}{\,\text{-}\grMod}%
\nc{\op}{^{\opname}}
\nc{\oto}[1]{\overset{#1}\to}
\nc{\otoo}[1]{\overset{#1}{\,\too\,}}
\nc{\sminus}{\!\smallsetminus\!}
\nc{\poplus}[1]{^{\oplus #1}}%
\nc{\potimes}[1]{^{\otimes #1}}
\nc{\sbull}{{\scriptscriptstyle\bullet}}
\nc{\SET}[2]{\big\{\,#1\Mid#2\,\big\}}
\nc{\SpcK}{\Spc(\cat K)}
\nc{\then}{\Rightarrow}
\nc{\unit}{\mathbb{1}}
\nc{\xra}{\xrightarrow}
\nc{\phigeom}[1]{\widetilde{\Phi}^{#1}}
\dmo{\Oname}{O}
\dmo{\proper}{proper}
\dmo{\lenormal}{\unlhd}
\dmo{\lnormal}{\lhd}
\nc{\normal}{\trianglelefteq}
\nc{\Op}{\Oname^p}
\nc{\Oq}{\Oname^q}
\dmo{\Sp}{Sp}
\dmo{\Ho}{Ho}
\dmo{\Fin}{Fin}
\dmo{\add}{add}
\dmo{\Fun}{Fun}
\dmo{\Ext}{Ext}
\dmo{\CMon}{CMon}
\dmo{\BB}{\cat B}
\dmo{\CC}{\cat C}
\dmo{\DD}{\cat D}
\dmo{\MM}{\cat M}
\dmo{\NN}{\cat N}
\dmo{\OO}{\mathcal{O}}
\dmo{\Map}{Map}
\dmo{\Span}{Span}
\dmo{\N}{N}
\dmo{\Cat}{Cat}
\dmo{\colim}{colim}
\dmo{\hocolim}{hocolim}
\dmo{\Ch}{Ch}
\dmo{\A}{\mathbb{A}^{eff}}
\nc{\AGeff}{\mathbb{A}_G^{\mathrm{eff}}}
\nc{\BGeff}{\mathcal{B}_G^{\mathrm{eff}}}
\nc{\BG}{{\mathcal{B}_G}}
\nc{\NBGeff}{{\N}{\BGeff}}
\dmo{\Ab}{Ab}
\dmo{\Set}{Set}
\dmo{\ev}{ev}
\dmo{\Spcl}{Spcl}
\nc{\Funadd}{\Fun_{\add}}
\dmo{\proj}{proj}
\dmo{\cof}{cof}
\nc{\StModfin}{\mathrm{StMod}^{\mathrm{fin}}}
\nc{\PrL}{\mathrm{Pr}^{L}_{\mathrm{st}}}
\dmo{\Coideal}{Coideal}
\dmo{\gen}{gen}
\dmo{\Coloc}{Coloc}
\nc{\Coloco}[1]{\Coloc^{\iHom}\hspace{-0.3ex}\langle #1 \rangle}
\dmo{\dual}{dual}
\nc{\LOCO}{\mathcal{L}\mathrm{oc}_{\otimes}}
\nc{\COLOCO}{\mathcal{C}\mathrm{oloc}^{\iHom}}
\nc{\Perff}[1]{\mathrm{Perf}_{#1}}
\nc{\Modd}[1]{\mathrm{Mod}_{#1}}
\dmo{\Perf}{Perf}
\dmo{\tel}{tel}
\nc{\Lococat}[2]{\Loc^{#1}_{\otimes}\hspace{-0.3ex}\langle #2 \rangle}
\dmo{\rk}{rk}
\dmo{\StMod}{StMod}
\dmo{\stmod}{stmod}
\nc{\Ccpl}[1]{\CC^{#1\text{-}\mathrm{cpl}}}
\nc{\Ctors}[1]{\CC^{#1\text{-}\mathrm{tors}}}
\nc{\Cloc}[1]{\CC^{#1\text{-}\mathrm{loc}}}
\nc{\fib}[1]{\mathrm{fib}}
\nc{\tors}{\mathrm{tors}}
\nc{\cpl}{\mathrm{cpl}}
\nc{\loc}{\mathrm{loc}}
\newcommand{\K}{\mathcal K}
\nc{\QW}{W^{\text{Q}}}
\nc{\mT}{\kern-0.5em\mod\kern-0.1em\text{-}\cat{T}^c}
\nc{\mTc}{\kern-0.5em\mod\kern-0.1em\text{-}\cat{T}^c}
\nc{\MTc}{\Mod\kern-0.1em\text{-}\cat{T}^c}
\nc{\MT}{\Mod\kern-0.1em\text{-}\cat{T}}
\newcounter{enum-resume-hack}
\Crefname{Thm}{Theorem}{Theorems}
\Crefname{Prop}{Proposition}{Propositions}
\Crefname{Lem}{Lemma}{Lemmas}
\Crefname{thmx}{Theorem}{Theorems}
\begin{document}
\setlength{\parindent}{0cm}
\setlength{\parskip}{0.8ex}
\title[A symmetric monoidal fracture square]{A symmetric monoidal fracture square}

\author[Naumann]{Niko Naumann}
\author[Pol]{Luca Pol}
\author[Ramzi]{Maxime Ramzi}

\address{Niko Naumann, Fakult{\"a}t f{\"u}r Mathematik, Universit{\"a}t Regensburg, Universit{\"a}tsstraße 31, 93040 Regensburg, Germany}
\email{Niko.Naumann@mathematik.uni-regensburg.de}
\urladdr{https://homepages.uni-regensburg.de/$\sim$nan25776/}

\address{Luca Pol, Fakult{\"a}t f{\"u}r Mathematik, Universit{\"a}t Regensburg, Universit{\"a}tsstraße 31, 93040 Regensburg, Germany}
\email{luca.pol@mathematik.uni-regensburg.de}
\urladdr{https://sites.google.com/view/lucapol/}

\address{Maxime Ramzi,
FachBereich Mathematik und Informatik, Universit{\"a}t M{\"u}nster, Einsteinstraße 62 Germany}
\email{mramzi@uni-muenster.de}
\urladdr{https://sites.google.com/view/maxime-ramzi-en/home}

\begin{abstract}
Given a symmetric monoidal stable $\infty$-category $\CC$ which is rigidly-compactly generated and a set of compact objects $\K$ of $\CC$, one can form the subcategories of $\K$-complete and $\K$-local objects. The goal of this paper is to explain how to recover $\CC$ from its $\K$-local and $\K$-complete subcategories while retaining the symmetric monoidal
structure. Specializing to the case where $\CC$ is the $\infty$-category of $G$-spectra for a finite group $G$, our result can be viewed as a symmetric monoidal variant of the isotropy separation decomposition, a version of which appeared previously in \cite{Krause20}.
\end{abstract}

\subjclass[2020]{55P60, 55P91, 55U35}
\maketitle
\tableofcontents

\section{Introduction}

In both algebra and geometry, the concept of analyzing a category through an ``open-closed'' decomposition is foundational. This approach is formalized categorically by the notion of recollement, first introduced by Beilinson, Bernstein, and Deligne \cite{Deligne-Beilinson-Bernstein} in the context of derived categories of perverse sheaves, and later developed by Lurie in the language of $\infty$-categories \cite[A.8]{HA}. 

Recall that given a stable $\infty$-category $\cat X$ and stable subcategories $\cat U, \cat Z\subseteq \cat X$, we say that $(\cat U, \cat Z)$ is a recollement of $\cat X$ if the inclusion functors $i_* \colon \cat Z \hookrightarrow \cat X$ and $j_* \colon \cat U \hookrightarrow \cat X$ admit left adjoints $i^*$ and $j^*$ respectively such that $j^*i_*\simeq 0$, and the functors $j^*$ and $i^*$ are jointly conservative, see \cite[Definition A.8.1]{HA}.
We can represent the above situation by a diagram 
\[
\begin{tikzcd}
    \cat Z \arrow[r,"i_*"'] & \arrow[l, hook, bend right, "i^*"']\cat X \arrow[r, "j^*"] & \cat U \arrow[l,bend left, hook, "j_*"]
\end{tikzcd}
\]
where we refer to $\cat U$ as the \emph{open} part of $\cat X$, $\cat Z$ as the \emph{closed} part\footnote{For sheaves over of a topological space this corresponds to the usual open and closed decomposition but for quasi-coherent sheaves the direction is reversed.}, and to $\phi:=i^*j_*$ as the \emph{gluing} functor. One important feature of recollements is that they allow a description of the larger category $\cat X$ in terms of smaller, simpler categories $\cat Z$ and $\cat U$ with possibly complicated glueing functor $\phi$. More formally, one can show that there is a pullback square of stable $\infty$-categories 
\begin{equation}\label{pullback-intro}
\begin{tikzcd}
    \cat X \arrow[d] \arrow[r] & \Fun(\Delta^1, \cat Z)\arrow[d,"\mathrm{ev}_1"] \\
    \cat U \arrow[r,"\phi"] & \cat Z
\end{tikzcd}
\end{equation}
see \cite[Corollary 2.12]{shah2022recollementsstratification}.
Of particular interest are \emph{symmetric monoidal} recollements in which the localization functors $j_*j^*$ and $i_*i^*$ are compatible with the symmetric monoidal
structure, see for instance \cite[Definition 2.2]{shah2022recollementsstratification} or \cite{stratNC} for a more general form of this glueing procedure. In this case, the glueing functor is typically only \emph{lax} symmetric monoidal so the square (\ref{pullback-intro}) is not a pullback in symmetric monoidal stable $\infty$-categories.
Although this is sufficient for many purposes, such as studying algebraic structures within the larger category, it can pose challenges for other objectives, such as the analysis of coalgebraic or properadic structures. See for example \cite{recolldbl} for the study of dualizable objects in a symmetric monoidal recollement and \cite{Separablepaper} for the study of separable commutative algebras in the category of $G$-spectra.

In this paper we study symmetric monoidal recollements arising from a local duality context in the sense of \cite{BHV2018} and explain how one can express the category of interest as a pullback of symmetric monoidal stable $\infty$-categories. Before stating our main theorem we recall some notation. Let $\cat C\in \CAlg(\PrL)$ be a presentably symmetric monoidal stable $\infty$-category which is rigidly-compactly generated. Associated to a fixed set of compact objects $\K \subseteq \CC^\omega$, there are symmetric monoidal $\infty$-categories $\CC^{\K\text{-}\loc}$ and $\CC^{\K\text{-}\cpl}$ of $\K$-local and $\K$-complete objects in $\CC$ respectively, see \Cref{def-torsion-etc}, which fit into a symmetric monoidal recollement 
\[
\begin{tikzcd}
    \CC^{\K\text{-}\loc} \arrow[r,"i_{\loc}"'] & \arrow[l, bend right, "L"']\cat C \arrow[r, "\Lambda"] & \CC^{\K\text{-}\cpl}\arrow[l,bend left, "i_{\cpl}"].
\end{tikzcd}
\]
We observe that the glueing functor $\phi:=L i_{\cpl}$ is typically lax symmetric monoidal as the inclusion functor $i_{\cpl}$ is usually only lax. The key idea behind our approach is to replace the category of complete objects with 
\[
\widehat{\CC}_{\K}:= \Ind ((\CC^{\K\text{-}\cpl})^{\dual}),
\]
the Ind-completion of the subcategory of dualizable complete objects. 
It is not hard to see that the completion functor $\Lambda$ induces a symmetric monoidal left adjoint $F \colon \CC \to \widehat{\CC}_{\K} $. We can further localize the category $\widehat{\CC}_{\K}$ with respect to the set of compact objects $F(\K)$ and obtain a symmetric monoidal localization functor $\hat{L}$ which fits into a commutative diagram of the form:
\begin{equation}\label{mega-square-intro}
\begin{tikzcd}
    \CC \arrow[d,"F"'] \arrow[r,"L"] & \CC^{\K\text{-}\loc} \arrow[d,"F^{\loc}"] \\
    \widehat{\CC}_{\K}\arrow[r,"\hat{L}"] &  (\widehat{\CC}_\K)^{F(\K)\text{-}\loc}
\end{tikzcd}
\end{equation}
where $F^\loc$ is the functor induced by the universal property of $L$ by $F$. 

\begin{Thm*}[\ref{thm-pullback}]
    Let $\CC\in\CAlg(\PrL)$ be rigidly-compactly  generated and $\K\subset \CC^\omega$. Then the square (\ref{mega-square-intro}) is a pullback in $\CAlg(\PrL)$.
\end{Thm*}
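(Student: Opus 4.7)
The plan is to reduce the pullback claim to a statement about underlying $\infty$-categories, and then verify it via a pointwise fracture-square argument.

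\textbf{Reduction to $\Cat_\infty$.} The forgetful functors $\CAlg(\PrL)\to \PrL\to \Cat_\infty$ both preserve small limits, so it suffices to show that the canonical comparison functor
\[
\Phi\colon \CC \longrightarrow \mathcal P := \CC^{\K\text{-}\loc}\times_{(\widehat{\CC}_\K)^{F(\K)\text{-}\loc}}\widehat{\CC}_\K, \qquad X\mapsto(LX, FX, \mathrm{can})
\]
is an equivalence of $\infty$-categories.

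\textbf{Adjoint and pointwise reformulation.} Being assembled from the left adjoints $L$ and $F$ via a commuting square, $\Phi$ is itself a left adjoint. Computing mapping spaces through the adjunctions $L\dashv i_\loc$, $F\dashv G$, $\hat L\dashv \hat i_\loc$, together with the identity $G\hat i_\loc \simeq i_\loc G^\loc$ obtained by passing to right adjoints in the commutative square, one finds that the right adjoint $\Psi\colon \mathcal P\to \CC$ is given on objects by
\[
\Psi(U, W, \alpha) \;=\; i_\loc U\times_{G\hat i_\loc \hat L W} GW,
\]
with the structure maps coming from the units of the adjunctions together with $\alpha$. Hence $\Phi$ is an equivalence iff both the unit and the counit are equivalences, which unwinds to the requirement that, for every $X\in\CC$, the pointwise square
\[
\begin{tikzcd}
X \arrow[r] \arrow[d] & GFX \arrow[d] \\
i_\loc LX \arrow[r] & G\hat i_\loc \hat L FX
\end{tikzcd}
\]
is a pullback in $\CC$, together with a matching essential surjectivity statement for $\Psi$ on $\mathcal P$.

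\textbf{Main obstacle.} The principal technical point is to establish the pointwise fracture square above. The fibre of the left vertical arrow is the $\K$-torsion part of $X$, while the fibre of the right vertical arrow is $G$ applied to the $F(\K)$-torsion part of $FX$ in $\widehat{\CC}_\K$. The required pullback property is therefore equivalent to the assertion that $G$ induces an equivalence from the $F(\K)$-torsion subcategory of $\widehat{\CC}_\K$ to the $\K$-torsion subcategory of $\CC$, compatibly with the unit $X\to GFX$. This compatibility rests on the specific construction $\widehat{\CC}_\K = \Ind((\CC^{\K\text{-}\cpl})^\dual)$: the symmetric monoidal left adjoint $F$ sends $\K$-torsion to $F(\K)$-torsion by monoidality, and the Ind-completion together with the adjunction $F\dashv G$ ensures the reverse direction, identifying the torsion subcategories. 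With this identification in hand, the pointwise fracture square reduces to the classical recollement fracture square for local duality, and the theorem follows.
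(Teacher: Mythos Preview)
Your overall strategy matches the paper's: reduce to $\Cat_\infty$, use the explicit pullback description of the right adjoint to the comparison functor, and check that the unit and counit are equivalences. You also correctly identify that the heart of the matter is that $F$ and its right adjoint $F^R$ (your $G$) restrict to inverse equivalences between the $\K$-torsion objects of $\CC$ and the $F(\K)$-torsion objects of $\widehat{\CC}_\K$.

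The gap is that you do not prove this torsion equivalence; you merely assert that ``the Ind-completion together with the adjunction $F\dashv G$ ensures the reverse direction.'' This is exactly the nontrivial content. In the paper it is \Cref{lem-torsunambig} and \Cref{lem:control-fiber}, and the real work happens in \Cref{megalemma}(e): for $x\in\CC^\omega\cap\CC^{\K\text{-}\tors}$ and $y\in(\CC^{\K\text{-}\cpl})^{\dual}$, the \emph{uncompleted} tensor $x\otimes y$ lies again in $\CC^\omega\cap\CC^{\K\text{-}\tors}$. Without this, one cannot show that the counit $FF^R(t)\to t$ is an equivalence on $F(\K)$-torsion objects, because a general generator of $(\widehat{\CC}_\K)^{F(\K)\text{-}\tors}$ has the form $F(x)\otimes Y(y)$ with $y$ only dualizable-complete, and one must know that $x\otimes y$ is already compact in $\CC$ (not merely in $\CC^{\K\text{-}\cpl}$) to conclude.

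A second, smaller gap: your counit argument is not carried out. Even granting the torsion equivalence, verifying the $\CC^{\K\text{-}\loc}$-component of the counit requires the Beck--Chevalley equivalence $LF^R\simeq (F^{\loc})^R\hat L$ (this is \Cref{cor-propertiesofF}(c)), and the $\widehat{\CC}_\K$-component requires an analogous compatibility (handled in the paper via \Cref{rem-BC}). Your sketch ``reduces to the classical recollement fracture square'' does not address these points; the paper's counit argument is noticeably more delicate than the unit argument precisely because of them.
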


We anticipate that the pullback decomposition described above will prove valuable across multiple areas of mathematics. As a first step in this direction, we highlight its utility in the context of equivariant homotopy theory.

For a finite group $G$, we let $\Sp_G$ denote the category of genuine $G$-spectra and consider $R \in \CAlg(\Sp_G)$ a commutative algebra object therein. Given a family $\cat F$ of subgroups of $G$, we would like to give an inductive description of the category 
\[
\Mod_{\Sp_G}(R)/\cat F
\]
of $R$-modules objects in genuine $G$-spectra with isotropy outside $\cat F$. To this end let $K\subseteq G$ be such that $K\notin\cat F$, but for all proper subgroups $K'\subsetneq K$ we have $K'\in\cat F$, and let $\cat F\cup K$ be the smallest family of subgroups of $G$ containing $\cat F$ and $K$. There is a canonical localization functor 
\[
L_{\cat F \cup K} \colon \Mod_{\Sp_G}(R)/\cat F \to \Mod_{\Sp_G}(R)/\cat F\cup K
\]
which nullifies the orbit corresponding to $K$. In order to retain the equivariant infomation at the subgroup $K$, we consider the $\mathbb E_\infty$-ring $\Phi^K(R)$ of $K$-geometric fixed points and recall that this is acted on by the Weyl group $W_G(K)$, allowing us to form the category
\[
\Perf(\Phi^K(R))^{hW_G(K)}.
\]
We will explain in \Cref{sec-G-spectra} how the geometric $K$-fixed points functor lifts to a functor:
\[
(\widetilde{\Phi}_R^K)^\omega \colon (\Mod_{\Sp_G}(R))^\omega \to \Perf(\Phi^K(R))^{hW_G(K)}.
\]
With this functor in hand we obtain the following result which can be interpreted as a symmetric monoidal variant of the isotropy separation decomposition. 

\begin{Thm*}[\ref{cor:isotropy_separation-compact}]
    In the above situation there is a pullback of symmetric monoidal stable $\infty$-categories:
\[
\begin{tikzcd}
    (\Mod_{\Sp_G}(R)/\cat F)^\omega \arrow[d,"(\widetilde{\Phi}_R^K)^\omega"'] \ar[r,"L^\omega_{\cat F\cup K}"] 
   &   (\Mod_{\Sp_G}(R)/\cat F\cup K )^\omega\arrow[d]\\
    \Perf(\Phi^K(R))^{hW_G(K)}\arrow[r]& \frac{\Perf(\Phi^K(R))^{hW_G(K)}}{\thickt{W_G(K)\circledast \Phi^K(R)}}
\end{tikzcd}
\]
where on the bottom right corner we used the induction functor from \Cref{not-induced}.
\end{Thm*}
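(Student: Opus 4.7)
The plan is to apply the main pullback theorem \Cref{thm-pullback} to the rigidly-compactly generated presentably symmetric monoidal stable $\infty$-category $\CC := \Mod_{\Sp_G}(R)/\cat F$ together with a suitably chosen set $\K$ of compact objects, and then to pass to compact objects on all four corners. Since $\cat F$ is generated by the compact orbits $R\otimes\Sigma^\infty_+ G/H_+$ for $H\in\cat F$, the quotient $\CC$ is itself rigidly-compactly generated and lies in $\CAlg(\PrL)$. For $\K$ I take the image in $\CC$ of $R\otimes\Sigma^\infty_+ G/K_+$; by the minimality hypothesis on $K$, the resulting $\K$-local subcategory $\CC^{\K\text{-}\loc}$ is precisely $\Mod_{\Sp_G}(R)/\cat F\cup K$, and the localization $L$ coincides with $L_{\cat F\cup K}$, matching the top row of the displayed square.

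The heart of the argument is the identification of the bottom-left corner $\widehat{\CC}_\K$ with the Ind-completion of $\Perf(\Phi^K(R))^{hW_G(K)}$ as a symmetric monoidal stable $\infty$-category. Concretely, the lifted geometric fixed-points functor $\widetilde{\Phi}^K_R$ constructed in \Cref{sec-G-spectra} should restrict to a symmetric monoidal equivalence between $(\CC^{\K\text{-}\cpl})^{\dual}$ and $\Perf(\Phi^K(R))^{hW_G(K)}$: intuitively, $\K$-completion isolates the part of $\CC$ supported at the orbit type $K$, and its homotopy-theoretic invariants are captured by $\Phi^K(R)$-modules together with the residual Weyl action. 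Ind-completion then yields $\widehat{\CC}_\K$, and under this equivalence the generator $F(\K)$ of the further localization corresponds to $W_G(K)\circledast\Phi^K(R)$, so that the bottom-right corner becomes precisely the Verdier quotient displayed in the statement.

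It remains to pass from the pullback in $\CAlg(\PrL)$ produced by \Cref{thm-pullback} to the asserted pullback of categories of compact objects. This is permissible because each of $F$, $L$, $F^\loc$ and $\hat L$ is a left adjoint whose right adjoint preserves filtered colimits, hence preserves compact objects, and formation of $(-)^\omega$ commutes with such pullbacks in $\CAlg(\PrL)$. The main obstacle in this outline is the symmetric monoidal identification of $\widehat{\CC}_\K$ with Weyl-equivariant perfect $\Phi^K(R)$-modules: establishing this symmetric monoidally, and correctly handling the $W_G(K)$-action on $\Phi^K(R)$, is exactly the preparatory work leading to the definition of $\widetilde{\Phi}^K_R$ in \Cref{sec-G-spectra}.
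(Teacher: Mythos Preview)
Your overall strategy matches the paper's: apply \Cref{thm-pullback} to $\CC=\Mod_{\Sp_G}(R)/\cat F$ with $\K=\{R\otimes G/K_+\otimes\widetilde E\cat F\}$, identify the top row via \Cref{lem-local-object} and the completion via \Cref{prop-completion}, then pass to compact objects. The identification $\widehat{\CC}_\K\simeq\Ind(\Perf(\Phi^K(R))^{hW_G(K)})$ you sketch is exactly how the paper proceeds.

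The gap is in the final step. Your claim that ``formation of $(-)^\omega$ commutes with such pullbacks in $\CAlg(\PrL)$'' is too quick, and in fact does \emph{not} give the stated square directly. When you pass to compact objects on the bottom-right corner $(\widehat{\CC}_\K)^{F(\K)\text{-}\loc}$, what you obtain via Neeman--Thomason is the \emph{idempotent completion}
\[
\left(\frac{\Perf(\Phi^K(R))^{hW_G(K)}}{\thickt{W_G(K)\circledast\Phi^K(R)}}\right)^{\natural},
\]
not the bare Verdier quotient that appears in the statement. This is precisely the content of \Cref{pullback-small}, and the paper first records that version as a separate corollary.

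To reach the stated result you then need the additional observation that both maps \emph{into} the bottom-right corner factor through the non-idempotent-completed quotient: the bottom map does so by definition of the Verdier quotient, and the right vertical map does because $(\widetilde{\Phi}^K_R)^\omega$ carries the kernel $\thickt{R\otimes G/K_+\otimes\widetilde E\cat F}$ of $L^\omega_{\cat F\cup K}$ into $\thickt{W_G(K)\circledast\Phi^K(R)}$ (using the computation in \Cref{prop-completion}). Since the inclusion of the quotient into its idempotent completion is fully faithful, the pullback of the smaller square agrees with that of the larger one. Without this extra step your argument only yields the idempotent-completed version.
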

The above pullback square provides a new way of performing isotropy separation, i.e. of doing induction on the family $\cat F$. For example, Krause \cite{Krause20} is able to study invertible objects in genuine $G$-spectra using this method, and in a companion paper \cite{Separablepaper} we exploit this pullback description to classify all separable  commutative algebras for the categories of $G$-spectra and derived $G$-Mackey functor for a finite $p$-group $G$. We observe that a version of \Cref{cor:isotropy_separation-compact} has already appeared in the literature in \cite[Theorem 3.11]{Krause20} under the restrictive assumption that the commutative algebra $R$ is inflated from a connective ring spectrum.

\subsection*{Outline}
In \Cref{section:DK} we collect some known results on Dwyer-Kan and Bousfield localizations. In \Cref{section:local}, we review in detail and generalize the setting of local duality from \cite{BHV2018}, and set the stage for the main result. There we use the contents of \Cref{section:atomic} which allow us to replace compact generation to a more general, relative notion of atomic generation. In \Cref{section:fracture} we prove the main result, \Cref{thm-pullback}, by an explicit analysis, using the preliminary sections.
In \Cref{section:EQ1}, we review enough equivariant stable homotopy theory to be able to apply our main result to this setting in \Cref{sec-G-spectra}. 
\subsection*{Acknowledgement}
The authors thank Marc Hoyois for useful conversations.
The first and second authors are supported by the SFB 1085 Higher Invariants in Regensburg. The third author was supported by the Danish National Research Foundation through the Copenhagen Centre for Geometry and Topology (DNRF151), and by the Deutsche Forschungsgemeinschaft (DFG, German Research Foundation) - Project-ID 427320536 - SFB 1442, as well as under Germany’s Excellence Strategy EXC 2044 390685587, Mathematics Münster: Dynamics-Geometry-Structure. 
\subsection*{Notations}
\begin{enumerate}
    \item Given a presentable $\infty$-category $\CC$ we write $\CC^\omega$ for its subcategory of compact objects. If $\CC$ admits a compatible symmetric monoidal structure, then we write $\CC^{\dual}$ for its subcategory of dualizable objects. 
    \item For a finite group $G$, we let $\Sp_G$ denote the $\infty$-category of genuine $G$-spectra. 
    \item We denote by $\CAlg$ the $\infty$-category of $\mathbb{E}_\infty$-algebras in spectra. 
    \item We denote by $\CAlg(\Cat_\infty^{\mathrm{perf}})$ the $\infty$-category of $2$-rings and symmetric monoidal exact functors. Recall that a $2$-ring is an essentially small, idempotent complete, symmetric monoidal and stable $\infty$-category whose tensor product is exact in each variable. 
    \item We denote by $\CAlg(\PrL)$ the $\infty$-category of presentably symmetric monoidal stable $\infty$-categories and symmetric monoidal left adjoints. 
    \item Given $\CC \in \CAlg(\PrL)$, we write $\unit$ for the unit object, $\hom_{\CC}$ for the internal hom object and $\mathbb{D}$ for the functional dual of $\CC$. 
\end{enumerate}

\section{Dwyer-Kan Localizations}\label{section:DK}
In this section we record some results about Dwyer-Kan localizations which we will use in the next sections. 

\begin{Def}
    Consider a functor of $\infty$-categories $f\colon \cat C\to \cat D$. 
    \begin{itemize}
    \item[(a)] We call $f$ a {\em Dwyer-Kan localization} if there exists a collection of arrows $W\subseteq \cat C$ such that $f$ witnesses $\cat D$ as $\cat C[W^{-1}]$, the $\infty$-categorical localization of $\cat C$ at $W$ in the sense of \cite[Definition 1.3.4.1]{HA}. In particular for any $\infty$-category $\cat E$, precomposition with $f$ induces a fully faithful functor 
    \[
   f^*\colon \Fun(\cat D, \cat E) \xhookrightarrow{} \Fun(\cat C, \cat E)
    \]
    whose essential image is given by those functors $\cat C \to \cat E$ which carry each morphism in $W$ to an equivalence in $\cat E$. 
    \item[(b)]  We call $f$ a {\em Bousfield localization} if it has a fully faithful right adjoint. 
    \item[(c)] We call $f$ a {\em Bousfield colocalization} if it has a fully faithful left adjoint.
    \end{itemize}
\end{Def}

\begin{Rem}\label{rem-localization-op}
Consider $f \colon \cat C \to \cat D$ and $f\op \colon \cat C \op \to \cat D \op$. 
    \begin{itemize}
        \item[(a)] If $f$ is a Dwyer-Kan localization, then so is $f\op$.
        \item[(b)] If $f$ is a Bousfield localization, then $f\op$ is a Bousfield colocalization. 
    \end{itemize}
\end{Rem}

The following is standard and also appears in \cite[Example 1.3.4.3]{HA}.

\begin{Lem}\label{lem:BousDK}
    Every Bousfield localization is a Dwyer-Kan localization. 
\end{Lem}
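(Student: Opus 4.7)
The plan is to let $W \subseteq \cat C$ denote the collection of morphisms $w$ such that $f(w)$ is an equivalence in $\cat D$, and verify that $f$ witnesses $\cat D$ as the Dwyer-Kan localization $\cat C[W^{-1}]$. Concretely, I would need to show that for every $\infty$-category $\cat E$, the precomposition functor $f^{*} \colon \Fun(\cat D, \cat E) \to \Fun(\cat C, \cat E)$ is fully faithful with essential image those functors that invert every arrow in $W$.

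The key input is that the right adjoint $g \colon \cat D \to \cat C$ being fully faithful is equivalent to the counit $\epsilon \colon fg \to \id_{\cat D}$ being an equivalence. Via one of the triangle identities, this immediately gives that $f(\eta_c) \colon f(c) \to fgf(c)$ is an equivalence for every $c \in \cat C$, so that the unit morphism $\eta_c$ of the adjunction belongs to $W$. This is the crucial observation that allows one to compare functors on $\cat C$ inverting $W$ with functors on $\cat D$.

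For the universal property, I would invoke that the adjunction $f \dashv g$ induces an adjunction of precomposition functors $g^{*} \dashv f^{*}$ between $\Fun(\cat C, \cat E)$ and $\Fun(\cat D, \cat E)$, whose counit at $H \colon \cat D \to \cat E$ is $H\epsilon \colon Hfg \to H$ and whose unit at $F \colon \cat C \to \cat E$ is $F\eta \colon F \to Fgf$. Since $\epsilon$ is an equivalence the counit is an equivalence, so $f^{*}$ is fully faithful. The essential image of a fully faithful right adjoint consists of those objects on which the unit is invertible, i.e.\ here the full subcategory of $F \colon \cat C \to \cat E$ such that $F\eta_c$ is an equivalence for every $c$. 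If $F$ inverts $W$, this condition is satisfied because $\eta_c \in W$; conversely, any $F$ in the essential image is of the form $Hf$, which manifestly inverts $W$. This identifies the essential image of $f^{*}$ with the desired subcategory.

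The main technical obstacle is producing the functorial adjunction $g^{*} \dashv f^{*}$ together with the described unit and counit inside the $\infty$-categorical framework; this is a standard consequence of the compatibility of adjunctions with functoriality of mapping spaces in functor $\infty$-categories, and once it is in place the remainder of the argument is a direct translation of the classical $1$-categorical proof.
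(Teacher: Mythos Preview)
Your argument is correct and is exactly the standard one. The paper does not supply its own proof of this lemma; it simply remarks that the statement is standard and cites \cite[Example 1.3.4.3]{HA}, so there is nothing further to compare.
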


For general interest, we include the following list of examples of Dwyer-Kan localizations that are not Bousfield localizations, indicating that they abound in practice.

\begin{Exa}
\leavevmode
\begin{enumerate}
    \item We claim that the base-change functor $\Perf(\Z) \to \Perf(\Z[1/p])$ is a Dwyer-Kan localization. To show this we apply \cite[Theorem I.3.3]{Nikolaus-Scholze} and instead verify that the base-change functor is the Verdier quotient of $\Perf(\Z)$ by the subcategory of $p$-torsion $\Z$-modules:
    \[
    \Perf(\Z)^{p\text{-}\tors}:=  \ker(\Z[1/p]\otimes_{\Z}-)=\thick\langle \Z/p \rangle.
    \]
    By construction we have a factorization
    \[
    \begin{tikzcd}
        \Perf(\Z) \arrow[r, two heads] \arrow[d,"q"'] & \Perf(\Z[1/p])\\
        \frac{\Perf(\Z)}{\Perf(\Z)^{p\text{-}\tors}} \arrow[ur, dotted, two heads] &
    \end{tikzcd}
    \]
    which is essentially surjective as the base-change functor is so. Given perfect $\Z$-modules $M$ and $N$ and using the formula in \cite[Theorem I.3.3(ii)]{Nikolaus-Scholze}, we see that 
    \begin{equation}\label{torsion}
    \Map(qM, qN) = \colim_{Q \in \Perf(\Z)^{p\text{-}\tors}_{/N}} \Map_{\Z}(M, \cof(Q \to N)). 
    \end{equation}
    We now observe that any map $Q \to N$ must factors uniquely through $Q \to \Sigma^{-1}\Z/p^\infty\otimes_{\Z}Q \to N$ by the universal property of the $p$-torsion functor, and since $Q$ is compact the map $Q \to \Sigma^{-1}\Z/p^\infty\otimes_{\Z} Q $ must factor through a finite stage $Q \to \Sigma^{-1}\Z/p^n \otimes_{\Z}Q$. Therefore we can simplify the right hand side of (\ref{torsion}) as
    \[
     \colim_n \Map_{\Z}(M, \cof(\Sigma^{-1}\Z/p^n \otimes N \to N)).
    \]
    Since $\Sigma^{-1}\Z/p^n=\mathrm{fib} (\Z \xrightarrow{p^n}\Z)$, the above colimit is
    \[
     \colim (\Map_{\Z}(M,N) \xrightarrow{p} \Map_{\Z}(M, N)\xrightarrow{p}\ldots )
    \]
    which finally identifies with
    \begin{align*}
    \Map_{\Z}(M, \Z[1/p] \otimes N)
    & = \Map_{\Z[1/p]}(\Z[1/p]\otimes M , \Z[1/p]\otimes N)
    \end{align*}
    showing that the functor is also fully faithful, and so proving that the dotted map in the above diagram is an equivalence. 
    Next we claim that this localization functor does not admit a right adjoint $G$, and hence it is not a Bousfield localization.
    Otherwise, $G(\mathbb Q)$ would be a perfect $\mathbb Z$-module such that 
\[ \pi_0\Map_{\Perf(\mathbb Z)}(\mathbb Z, G(\mathbb Q))\simeq \pi_0\Map_{\Perf(\mathbb Q)}(\mathbb Q, \mathbb Q)\simeq\mathbb Q,\]
but maps between perfect $\mathbb Z$-modules form a finitely generated abelian group.
\item The canonical functor from simplicial sets to the $\infty$-category of spaces is a Dwyer-Kan localization, inverting the weak equivalences, but not a Bousfield localization (for example because it does not preverve colimits, i.e. there are colimits in simplicial sets that are not {\em homotopy} colimits). This example works with essentially any model (1-)category in place of simplicial sets.
\end{enumerate}
\end{Exa}

In fact, we also have:
\begin{Lem}\label{lem:DKBous}
    Let $f\colon \cat C\to \cat D$ be a Dwyer-Kan localization. If $f$ admits a right adjoint $f^R$, then $f^R$ is fully faithful and so $f$ is a Bousfield localization. 
\end{Lem}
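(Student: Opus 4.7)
The plan is to establish that the right adjoint $f^R$ is fully faithful by comparing the Yoneda embeddings of $\cat C$ and $\cat D$, leveraging the Dwyer--Kan property at the level of space-valued presheaves.

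First, by \Cref{rem-localization-op} the opposite functor $f\op \colon \cat C\op \to \cat D\op$ is itself a Dwyer--Kan localization. Specializing the defining universal property to the $\infty$-category $\cat S$ of spaces, the precomposition functor
\[
(f\op)^* \colon \Fun(\cat D\op, \cat S) \to \Fun(\cat C\op, \cat S)
\]
is fully faithful.

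Next, I would consider the Yoneda embedding $Y_{\cat D} \colon \cat D \to \Fun(\cat D\op, \cat S)$ and, using the adjunction $f \dashv f^R$, identify the composite $(f\op)^* \circ Y_{\cat D}$ naturally in $X \in \cat D$ as
\[
(f\op)^*(Y_{\cat D}(X)) = \Map_{\cat D}(f(-), X) \simeq \Map_{\cat C}(-, f^R X) = Y_{\cat C}(f^R X),
\]
so that $(f\op)^* \circ Y_{\cat D} \simeq Y_{\cat C} \circ f^R$ as functors $\cat D \to \Fun(\cat C\op, \cat S)$.

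The left-hand side is a composition of two fully faithful functors and is therefore fully faithful. Since the Yoneda embedding $Y_{\cat C}$ is itself fully faithful, it reflects this property, and we conclude that $f^R$ is fully faithful; by definition, $f$ is then a Bousfield localization. The main point requiring care is the naturality in $X$ of the adjunction-induced equivalence: rather than producing it pointwise, one should assemble the unit and counit of $f \dashv f^R$ into an honest natural equivalence of functors $\cat D \to \Fun(\cat C\op, \cat S)$, for instance by invoking the standard packaging of the adjunction data via mapping-space functors in $\cat C$ and $\cat D$.
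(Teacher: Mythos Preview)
Your proof is correct and follows essentially the same strategy as the paper: pass to presheaves, use that $(f\op)^*$ is fully faithful because $f\op$ is a Dwyer--Kan localization, and then identify $(f\op)^*\circ Y_{\cat D}\simeq Y_{\cat C}\circ f^R$ to conclude. The only differences are cosmetic: the paper first enlarges the universe so that $\cat C$ and $\cat D$ are small (you should mention this, so that $\Fun(\cat C\op,\cat S)$ is well-behaved), and the paper obtains the key identification by recognizing $(f\op)^*$ as the left Kan extension $(g\op)_!$ along $g\op$ (via uniqueness of adjoints, since $(f\op)^*\dashv (g\op)^*$ and $(g\op)_!\dashv (g\op)^*$) and then citing \cite[Proposition~5.2.6.3]{HTT}, whereas you compute the composite directly from the adjunction $f\dashv f^R$. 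Your route is slightly more elementary and avoids invoking Kan extensions; the paper's route sidesteps the naturality bookkeeping you flag at the end by appealing to the cited reference.
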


\begin{proof}
Up to passing to a larger universe we can assume that both $\cat C$ and $\cat D$ are small. For typographical reasons let us write $g$ for the right adjoint $f^R$. We then need to show that $g$ is fully faithful. Recall from \Cref{rem-localization-op} that $f\op$ is a Dwyer-Kan localization so the map induced on presheaf categories
\[ 
(f\op)^*\colon \Fun(\cat D\op,\cat S)\to \Fun(\cat C\op,\cat S)
\]
is fully faithful.
Then $f\dashv g$ clearly implies $g\op\dashv f\op$ which in turn implies $(f\op)^*\dashv (g\op)^*$ \footnote{To see this note that the unit (resp. counit) map of $g\op\dashv f\op$ induce a counit (resp. unit) map for the adjunction $(f\op)^*\dashv (g\op)^*$. The passage $f\mapsto f^*$ is contravariant for 1-morphisms but covariant for 2-morphisms.}.
But the left adjoint of $(g\op)^*$ is $(g\op)_!$, the left Kan extension along $g\op$, hence $(g\op)_!\simeq (f\op)^*$ is fully faithful. This implies that $g\op$ (and $g$ too) is fully faithful as by \cite[Proposition 5.2.6.3]{HTT} there is a commutative square
\[
\begin{tikzcd}
    \Fun(\cat D\op, \cat S) \arrow[r, hook, "(g_!)\op"] & \Fun(\cat C\op, \cat S)\\
    \cat D \arrow[u, hook, "y"] \arrow[r,"g\op"] & \cat C \arrow[u, hook, "y"'].
\end{tikzcd}
\]
where the vertical arrows are given by the Yoneda embeddings.
\end{proof}

\begin{Cor}\label{cor:fully-faithful-adjunction}
    Let $i\colon \CC \rightleftarrows \DD : \Gamma$ be an adjunction and suppose that $\Gamma$ admits a right adjoint $R$. If $i$ is fully faithful, then so is $R$.
\end{Cor}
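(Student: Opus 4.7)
The plan is to chain together \Cref{lem:BousDK} and \Cref{lem:DKBous} after passing to opposite categories, using \Cref{rem-localization-op} to translate back and forth. The key observation is purely formal: the hypothesis that $i$ is a fully faithful \emph{left} adjoint becomes, after opposing, the statement that $\Gamma\op$ is a functor with a fully faithful \emph{right} adjoint $i\op$, which is precisely the paper's definition of a Bousfield localization.

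Concretely, I would proceed as follows. First, note that the adjunction $i\dashv \Gamma$ gives, upon opposing, an adjunction $\Gamma\op\dashv i\op$ in which $i\op$ is fully faithful (since $i$ is). Hence $\Gamma\op$ is a Bousfield localization, and \Cref{lem:BousDK} implies it is a Dwyer-Kan localization. By \Cref{rem-localization-op}(a), the functor $\Gamma$ itself is then a Dwyer-Kan localization. Since $\Gamma$ admits a right adjoint $R$ by hypothesis, \Cref{lem:DKBous} directly yields that $R$ is fully faithful, which is what we wanted.

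There is no real obstacle here; the only subtlety worth flagging is the terminological one, namely that in the paper's conventions the existence of a fully faithful left adjoint makes $\Gamma$ a Bousfield \emph{colocalization}, so one must dualize once before applying \Cref{lem:BousDK}, and then dualize back before applying \Cref{lem:DKBous}. Both dualization steps are handled uniformly by \Cref{rem-localization-op}.
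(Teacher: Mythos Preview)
Your proof is correct and essentially identical to the paper's own argument: both observe that $\Gamma\op$ is a Bousfield localization (equivalently, $\Gamma$ is a Bousfield colocalization), invoke \Cref{lem:BousDK} to get that $\Gamma\op$ is a Dwyer-Kan localization, use \Cref{rem-localization-op}(a) to transfer this to $\Gamma$, and conclude with \Cref{lem:DKBous}.
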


\begin{proof}
    Since $i\colon \CC\to \DD$ is fully faithful, $\Gamma$ is a Bousfield colocalization by definition. Hence, $\Gamma\colon \DD\op\to \CC\op$ is a Bousfield localization and thus a Dwyer-Kan localization by \Cref{lem:BousDK}. Since the notion of Dwyer-Kan localization is invariant under passage to the opposite categories, also $\Gamma\colon \DD\to\CC$ is a Dwyer-Kan localization. Then by \Cref{lem:DKBous}, $R$ is fully faithful.
    \end{proof}

\begin{Lem}\label{lem-equivalence-BC}
    Consider a commutative square 
  \[\begin{tikzcd}
	{\cat A_0} & {\cat  A_1} \\
	{\cat  B_0} & {\cat  B_1}
	\arrow["p", from=1-1, to=1-2]
	\arrow["{f_0}"', from=1-1, to=2-1]
	\arrow["{f_1}", from=1-2, to=2-2]
	\arrow["q"', from=2-1, to=2-2]
\end{tikzcd}\]
where $p,q$ are Dwyer-Kan localizations and $f_0,f_1$ admit right adjoints $f_0^R$ and $f_1^R$ respectively. If there exists an equivalence $p f_0^R\simeq f_1^Rq$, then the square is vertically right adjointable, that is, the Beck-Chevalley map $pf_0^R\to f_1^R q$ is an equivalence. 
\end{Lem}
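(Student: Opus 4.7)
My plan is to exploit the Dwyer-Kan universal property of both $p$ and $q$ to lift the question about the Beck-Chevalley map to a question about a natural transformation of right adjoints of $f_1$, which then becomes forced by uniqueness.

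First I would use $q$. Since by hypothesis $pf_0^R \simeq f_1^R q$ and $f_1^R q$ visibly inverts the class $W_q$ of morphisms that $q$ inverts, so does $pf_0^R$. By the universal property of $q$, there is an essentially unique $\Phi \colon \cat B_1 \to \cat A_1$ with $\Phi q \simeq pf_0^R$, and the hypothesis then gives an abstract equivalence $\Phi \simeq f_1^R$. Since $q^*$ is fully faithful, the Beck-Chevalley transformation $\alpha \colon pf_0^R \to f_1^R q$ is the image under $q^*$ of a unique natural transformation $\tilde\alpha \colon \Phi \to f_1^R$, and $\alpha$ is an equivalence if and only if $\tilde\alpha$ is.

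Next I would use $p$ to pin down $\tilde\alpha$. Whiskering $\alpha$ with $f_0$ and precomposing with $p\eta_0 \colon p \to pf_0^R f_0$, one can verify by a direct manipulation of the defining formula of the Beck-Chevalley map (using the triangle identity for $f_0\dashv f_0^R$) that
\[
(\alpha f_0)\circ (p\eta_0) \;\simeq\; f_1^R\gamma\circ (\eta_1 p) \colon p \longrightarrow f_1^R f_1 p \simeq f_1^R q f_0,
\]
where $\gamma \colon f_1 p \simeq qf_0$ is the commutativity iso. Both source and target here factor through $p$ (the target trivially, and the source via the hypothesis equivalence $pf_0^R f_0 \simeq f_1^R q f_0$). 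Hence, by the full faithfulness of $p^*$, this identity lifts to an identity in $\Fun(\cat A_1, \cat A_1)$ relating $\tilde\alpha f_1$ and $\eta_1$.

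Finally, I would conclude by using this relation together with the adjunction $f_1 \dashv f_1^R$ to recognize $\tilde\alpha$ as the canonical comparison map between two right adjoints of $f_1$: namely $\Phi$ (whose adjunction to $f_1$ emerges from the relation above playing the role of the unit, with the lift of $\epsilon_0$ providing the counit) and $f_1^R$. Uniqueness of right adjoints then forces $\tilde\alpha$ to be an equivalence. The main obstacle is this last step: verifying carefully that the relation produced in the lift does endow $\Phi$ with the structure of a right adjoint to $f_1$ such that the canonical equivalence to $f_1^R$ agrees with $\tilde\alpha$, as opposed to merely providing an abstract equivalence $\Phi \simeq f_1^R$ that is independent of $\tilde\alpha$.
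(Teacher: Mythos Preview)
Your approach is essentially the same as the paper's: descend through the Dwyer--Kan localizations, recognize the induced functor on $\cat B_1$ as a second right adjoint to $f_1$, and invoke uniqueness of adjoints. The paper's execution is, however, more direct and avoids precisely the obstacle you flag at the end. Rather than lifting the Beck--Chevalley map $\alpha$ through $q^*$ and then trying to identify the lift $\tilde\alpha$ as the canonical comparison, the paper first descends the \emph{entire adjunction} $f_0 \dashv f_0^R$: since $f_0^R$ sends $q$-equivalences to $p$-equivalences (this is what the hypothesis $pf_0^R\simeq f_1^Rq$ buys), it induces $g_1\colon \cat B_1\to\cat A_1$ (your $\Phi$), and the unit, counit, and triangle identities of $f_0\dashv f_0^R$ descend verbatim to exhibit $f_1\dashv g_1$. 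Only then does one compute: the canonical comparison $g_1\to f_1^Rf_1g_1\to f_1^R$ is automatically an equivalence by uniqueness of adjoints, and a short unwinding (using that the descended counit is determined by $q(\epsilon_0)$) shows that precomposing it with $q$ recovers the Beck--Chevalley map. This ordering---descend the adjunction first, identify the BC map second---sidesteps the delicate bookkeeping you were anticipating.
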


\begin{proof}
    The equivalence $pf_0^R\simeq f_1^Rq$ guarantees that $f_0^R$ sends $q$-equivalences to $p$-equivalences. Thus $f_0^R$ induces a functor $g_1\colon  \cat B_1\to \cat A_1$, and the unit/counit maps, and triangle identities of the $f_0\dashv f_0^R$ adjunction induce unit/counit maps and triangle identities witnessing $f_1\dashv g_1$. By uniqueness of adjoints, the following specific map is an equivalence : $$g_1\to f_1^R f_1g_1 \to f_1^R$$
    where the first map is induced by the unit of the adjunction $f_1\dashv f_1^R$, and the second map is induced by the counit $f_1g_1\to \id$. The latter is completely determined by the fact that the composite $qf_0f_0^R\simeq f_1pf_0^R \simeq f_1g_1q\to q$ is $q$ applied to the counit. 
    Thus, precomposing the map in question with $q$ gives that the following composite is an equivalence  $$pf_0^R\simeq g_1q\to f_1^Rf_1g_1q \to f_1^Rq$$
    By the analysis above, this composite is the Beck-Chevalley map, so the latter is an equivalence, as was to be shown. 
\end{proof}

\section{Atomic objects and internal left adjoints}\label{section:atomic}
Throughout this section, we fix some $\DD\in\CAlg(\Pr^L_{\mathrm{st}})$ and work in the $\infty$-category of $\DD$-modules in $\Pr^L_{\mathrm{st}}$. After some preliminaries on $\DD$-linear categories, we recall the definition of $\DD$-atomic objects and list some important examples. Some of this material has already appeared in a slightly different setting in \cite{Ben-Moshe-Schlank} and \cite{Ben-Moshe} as well as in \cite{MaximeDbl}.

\begin{Rem}
Let $\MM$ and $\NN$ be $\DD$-modules in $\Pr^L_{\mathrm{st}}$. Then $\MM$ and $\NN$ are naturally tensored over $\DD$ and so we can consider the $\infty$-category $\Fun_{\DD}(\MM, \NN)$ of $\DD$-linear functors from $\MM$ to $\NN$, see \cite[Definition 4.6.2.7]{HA}. There is a canonical forgetful functor $\theta \colon\Fun_{\DD}(\MM,\NN)\to \Fun(\MM,\NN)$ and we denote by $\Fun_{\DD}^{L}(\MM,\NN)$ the full subcategory of $\DD$-linear functors $f\colon \MM \to \NN$ such that $\theta(f) \in \Fun^L(\MM,\NN)$. By \cite[Theorem 4.8.4.1]{HA}, evaluation at $\mathbb 1\in\DD\simeq\Mod_{\mathbb 1}(\DD)$ is an equivalence 
\[
 \Fun_{\DD}^L(\DD, \MM)\xrightarrow{\sim} \MM.
\]
Its inverse sends $m\in\MM$ to the colimit-preserving $\DD$-linear functor $m \otimes -\colon\DD \to \MM$.
\end{Rem}

\begin{Rem}\label{rem-D-lin-functors}
Let $\MM$ be a $\DD$-module in $\Pr^L_{\mathrm{st}}$, and $m\in\MM$. Then the $\DD$-linear functor
\[ -\otimes m \, :\, \DD \to \MM \]
admits a right adjoint, which we denote by $\hom_{\MM}(m,-)$ and refer to as {\em the internal hom functor}.
Explicitly, we have
\[ \Map_{\DD}(d,\hom_{\MM}(m,m'))\simeq \Map_{\MM}(d\otimes m, m'),\]
naturally in $d\in\DD$ and $m,m'\in\MM$.
In particular, if $f:\MM\to\NN$ is $\DD$-linear, then the map
\[ \Map_{\MM}(d\otimes m,m')\to\Map_{\NN}(f(d\otimes m),f(m'))\simeq\Map_{\NN}(d\otimes f(m),f(m'))\]
corresponds to a natural map
\[ \hom_{\MM}(m,m')\to\hom_{\NN}(f(m),f(m')),\]
which is an equivalence if $f$ is fully faithful.\\
An easy manipulation of adjoints shows that
for every functor $f\colon \MM\to \NN$ in $\Mod_{\DD}(\Pr^L_{\mathrm{st}})$ with right adjoint $f^R$, we have
\begin{equation}\label{adjoints-internalhom}
\hom_{\NN}(f(m),n)\simeq \hom_{\MM}(m,f^R(n)),
\end{equation}
naturally in  $m\in\MM$ and $n\in\NN$.
\end{Rem}

\begin{Exa}\label{ex-internalhom}
     Consider $\MM=\DD\in\Mod_{\DD}(\Pr^L_{\mathrm{st}})$ with $\DD$-module structure given by its symmetric monoidal structure. In this case the internal hom functor $\hom_{\MM}$ defined above agrees with the usual internal hom functor coming from the closed symmetric monoidal structure of $\DD$. 
\end{Exa}

\begin{Def}\label{def-proj-formula}
     Let $\MM,\NN$ denote $\DD$-modules in $\Pr^L_{\mathrm{st}}$, and let $f \in \Fun_{\DD}^L(\MM, \NN)$ with right adjoint $f^R$.
    For any $n\in \NN$ and $d\in\DD$, the counit map $ff^R(n) \to n$ induces a map $f(d\otimes f^R(n))\simeq d \otimes ff^R(n) \to d\otimes n$ whose mate we denote by
    \begin{equation}\label{proj-formula}
        \mathrm{pr}_{d,n}\colon d \otimes f^R(n) \to f^R(d \otimes n).
    \end{equation}
    We say that $f^R$ satisfies the \emph{projection formula} if the map (\ref{proj-formula}) is an equivalence for all $n \in \NN$ and $d\in \DD$. 
\end{Def}

It will be useful to have a more conceptual description of the projection map. 

\begin{Rem}\label{rem-proj-formula}
    Suppose we are in the situation of \Cref{def-proj-formula}. Since $f$ is $\DD$-linear, there is a commutative diagram 
    \[
    \begin{tikzcd}
        \MM \arrow[r,"f"] \arrow[d, "d \otimes-"'] & \NN \arrow[d,"d \otimes-"] \\
        \MM \arrow[r,"f"] & \NN
    \end{tikzcd}
    \]
    and so a natural equivalence of functors $ d \otimes f(-)\simeq f(d\otimes -)$. By assumption, the functor $f$ admits a right adjoint $f^R$, and so there is a Beck-Chevalley transformation of the above square which is a natural transformation $d \otimes f^R (-) \to f^R(d\otimes -)$. Unravelling the definition, we see that this Beck-Chevalley transformation agrees with the projection map. 
\end{Rem}

\begin{Lem}\label{megalemma-proj-formula}
\leavevmode
    \begin{enumerate}
        \item For $n\in \NN$ and $d,d'\in\DD$, the composite
        \[
        d \otimes d' \otimes f^R(n) \xrightarrow{d \otimes\mathrm{pr}_{d',n}} d \otimes f^R(d'\otimes n) \xrightarrow{\mathrm{pr}_{d, d'\otimes n}} f^R(d \otimes d' \otimes n)
        \]
        is homotopic to $\mathrm{pr}_{d\otimes d', n}$.
        \item For $n \in \NN$ and $d\in \DD$ dualizable, the projection map $\mathrm{pr}_{d,n}$ is an equivalence.
    \end{enumerate}
\end{Lem}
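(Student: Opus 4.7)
The plan is to deduce both parts from the Beck--Chevalley interpretation of $\mathrm{pr}$ given in \Cref{rem-proj-formula}, together with standard mate calculus.

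For (1), I would recognise the claim as an instance of the compatibility of mates with vertical pasting of squares. Consider the diagram
\[
\begin{tikzcd}[row sep=small]
\MM \arrow[r,"f"] \arrow[d,"d'\otimes-"'] & \NN \arrow[d,"d'\otimes-"] \\
\MM \arrow[r,"f"] \arrow[d,"d\otimes-"'] & \NN \arrow[d,"d\otimes-"] \\
\MM \arrow[r,"f"] & \NN
\end{tikzcd}
\]
whose two horizontal sub-squares commute by $\DD$-linearity of $f$. By \Cref{rem-proj-formula} the Beck--Chevalley mate of the top (resp.\ bottom) square is $\mathrm{pr}_{d',-}$ (resp.\ $\mathrm{pr}_{d,-}$), and the mate of the outer pasted square, which identifies with the square for $(d\otimes d')\otimes-$, is $\mathrm{pr}_{d\otimes d',-}$. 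The standard pasting lemma for Beck--Chevalley transformations then identifies $\mathrm{pr}_{d\otimes d',-}$ with the vertical composite of the two individual mates, which is exactly the asserted equality.

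For (2), when $d$ is dualizable with dual $d^\vee$, the $\DD$-linear endofunctor $d\otimes-$ on $\MM$ acquires a \emph{left} adjoint $d^\vee\otimes-$ in addition to the right adjoint $\hom_{\MM}(d,-)\simeq d^\vee\otimes-$. My preferred route is to test $\mathrm{pr}_{d,n}$ on mapping spaces via Yoneda. Combining $(d^\vee\otimes-)\dashv(d\otimes-)$ on both $\MM$ and $\NN$, the adjunction $f\dashv f^R$, and $\DD$-linearity of $f$, one assembles a natural chain of equivalences
\[
\Map_\MM(m,d\otimes f^R(n))\simeq\Map_\NN(d^\vee\otimes f(m),n)\simeq\Map_\MM(m,f^R(d\otimes n))
\]
for $m\in\MM$; the only content is to recognise this composite as precomposition with $\mathrm{pr}_{d,n}$, which follows by tracing its defining counit through the adjunctions.

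The main obstacle is the bookkeeping: a correctly-coherent form of the mate pasting lemma for (1), and a careful Yoneda identification for (2). Both are well-documented in the $\infty$-categorical mate calculus, so once those tools are in hand the argument is essentially formal. A more elementary alternative would be to exhibit an explicit homotopy inverse to $\mathrm{pr}_{d,n}$ as the composite
\[
f^R(d\otimes n)\xrightarrow{\mathrm{coev}\otimes-}d\otimes d^\vee\otimes f^R(d\otimes n)\xrightarrow{d\otimes\mathrm{pr}_{d^\vee,d\otimes n}}d\otimes f^R(d^\vee\otimes d\otimes n)\xrightarrow{d\otimes f^R(\mathrm{ev}\otimes n)}d\otimes f^R(n),
\]
and then verify both composites equal the identity using part (1) to re-associate and the triangle identities for $(\mathrm{ev}_d,\mathrm{coev}_d)$; this avoids external mate lemmas at the cost of a heavier diagram chase.
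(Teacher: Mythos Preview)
Your argument for part (a) is essentially the paper's: the paper draws the same pasting diagram transposed (with $f$ vertical and the tensoring functors horizontal) and invokes the same compatibility of Beck--Chevalley transformations with composition of squares.

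For part (b) the paper takes precisely your ``alternative'' route: it writes down the explicit candidate inverse
\[
f^R(d\otimes n)\xrightarrow{\mathrm{coev}\otimes 1} d\otimes \mathbb{D}(d)\otimes f^R(d\otimes n)\xrightarrow{1\otimes\mathrm{pr}_{\mathbb{D}(d),d\otimes n}} d\otimes f^R(\mathbb{D}(d)\otimes d\otimes n)\xrightarrow{1\otimes f^R(\mathrm{ev}\otimes 1)} d\otimes f^R(n)
\]
and then checks both composites are identities via part (a) and the triangle identities, through two moderately large commutative diagrams. Your primary Yoneda route is shorter and avoids those diagrams, at the cost of the bookkeeping you flag: one must verify that the chain of adjunction equivalences is induced by \emph{post}-composition with $\mathrm{pr}_{d,n}$ (you wrote ``precomposition'', but the map points from $d\otimes f^R(n)$ to $f^R(d\otimes n)$). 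Note also that the adjunction $(d^\vee\otimes-)\dashv(d\otimes-)$ on a general $\DD$-module $\MM$ is not entirely free---it uses that the action map transports the duality data from $\DD$ to endofunctors of $\MM$---but this is routine and indeed is exactly what the paper verifies later in the proof of \Cref{lem:atomics}(d). Both approaches are standard; the paper's is more self-contained, while yours is cleaner once the adjunction identifications are in hand.
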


\begin{proof}
    For part (a), consider the following diagram
    \[
    \begin{tikzcd}
        \MM \arrow[r,"d' \otimes -"] \arrow[d, "f"'] & \MM \arrow[d,"f"] \arrow[r,"d\otimes -"]&  \MM \arrow[d,"f"]\\
        \NN \arrow[r,"d'\otimes -"] & \NN \arrow[r,"d\otimes -"]&\NN
    \end{tikzcd}
    \]
    which commutes by $\DD$-linearity of $f$.
    By \Cref{rem-proj-formula}, it suffices to verify that the composition of the two Beck-Chevalley transformations agrees with the Beck-Chevalley transformation associated to the outer square. This is a well-known property of Beck-Chevalley transformations.

    For part (b), consider $d \in \DD$ dualizable so that we have (co)evaluation maps $\ev\colon  \mathbb{D}(d) \otimes d \to \unit$ and $\mathrm{coev}\colon \unit \to d \otimes \mathbb{D}(d)$. For any $n \in \NN$, we define a map $\alpha_{d,n}\colon f^R(d \otimes n) \to d \otimes f^R(n)$ as the dotted arrow in the following square 
    \[
    \begin{tikzcd}[column sep=1.5cm, row sep=1.5cm]
    f^R(d \otimes n)\simeq \unit \otimes f^R(d \otimes n) \arrow[d,"{\mathrm{coev} \otimes 1}"']\arrow[r,dotted, "\alpha_{d,n}"]  & d \otimes f^R (n) \\
    d \otimes \mathbb{D}(d) \otimes f^R(d\otimes n) \arrow[r,"{1 \otimes \mathrm{pr}_{\mathbb{D}(d), d \otimes n}}"] & d \otimes f^R(\mathbb{D}(d)\otimes d \otimes n)  \arrow[u,"{1 \otimes f^R(\mathrm{ev} \otimes 1)}"'].
    \end{tikzcd}
    \]
    We claim that this map is an inverse for the projection map.  To see this, consider the following diagram:
    \[
    \begin{tikzcd}[column sep=2cm, row sep =2cm]
     d \otimes f^R(n) \arrow[r,"\mathrm{coev}\otimes 1 \otimes 1"] \arrow[d,"\mathrm{pr}_{d,n}"'] & d \otimes \mathbb{D}(d) \otimes d \otimes f^R(n) \arrow[dr, "1\otimes \mathrm{pr}_{\mathbb{D}(d) \otimes d,n}"]\arrow[d,"1\otimes 1 \otimes \mathrm{pr}_{d,n}"] \arrow[r,"1\otimes \mathrm{ev}\otimes 1"] & d \otimes f^R(n)\\
     f^R(d \otimes n) \arrow[r,"\mathrm{coev}\otimes 1"] & d \otimes \mathbb{D}(d) \otimes f^R(d\otimes n) \arrow[r, "1 \otimes \mathrm{pr}_{\mathbb{D}(d), d \otimes n}"] & d \otimes f^R(\mathbb{D}(d) \otimes d \otimes n) \arrow[u,"1 \otimes f^R(\mathrm{ev} \otimes 1)"']
    \end{tikzcd}
    \]
    where the top horizontal composite is the identity by definition of a duality datum, the left most square commutes by naturality of the projection map, the bottom triangle commutes by part (a), and the top triangle commutes as it is derived from the following commutative square 
    \[
    \begin{tikzcd}[column sep=1.5cm, row sep=1.5cm]
        \mathbb{D}(d) \otimes d \otimes f^R(n)\arrow[d,"\mathrm{pr}_{\mathbb{D}(d) \otimes d, n}"'] \arrow[r,"\mathrm{ev} \otimes 1"] &\unit \otimes f^R(n) \arrow[d,"{\mathrm{pr}_{\unit,n}}","="'] \\
        f^R(\mathbb{D}(d) \otimes d \otimes n) \arrow[r,"f^R(\mathrm{ev}\otimes 1)"] & f^R(\unit \otimes n).
    \end{tikzcd}
    \]
    The commutativity of the above big diagram shows that our map is a right inverse to the projection map. 
    To check that it is also a left inverse we consider the following diagram 
    \[
    \begin{tikzcd}[column sep=2cm, row sep=2cm]
        f^R(d \otimes n) \arrow[r,"f^R(\mathrm{coev}\otimes 1)"]\arrow[d,"\mathrm{coev}\otimes 1"'] & f^R(d \otimes \mathbb{D}(d) \otimes d \otimes  n)\arrow[r,"f^R(1 \otimes \mathrm{ev}\otimes 1)"] & f^R(d \otimes n)\\
        d \otimes \mathbb{D}(d) \otimes f^R(d\otimes n)\arrow[ur,"{\mathrm{pr}_{d \otimes \mathbb{D}(d), d \otimes n}}"'] \arrow[r,"{1\otimes \mathrm{pr}_{\mathbb{D(d)}, d \otimes n}}"] & d \otimes f^R(\mathbb{D}(d) \otimes d \otimes n) \arrow[u,"{\mathrm{pr}_{d, \mathbb{D}(d) \otimes d \otimes n}}"'] \arrow[r,"1 \otimes f^R(\mathrm{ev}\otimes 1)"] & d \otimes f^R(n)\arrow[u,"\mathrm{pr}_{d,n}"']
    \end{tikzcd}
    \]
    where the top horizontal composite is the identity by definition of a duality datum. The above diagram is commutative by the same reasoning as in the previous case, showing that our map is also a left inverse to the projection map.
\end{proof}

In the situation of \Cref{def-proj-formula}, the right adjoint $f^R$ is generally not a map in 
$\Mod_{\DD}(\Pr^L_{\mathrm{st}})$, indeed it may fail both to be a left-adjoint and to admit a $\DD$-linear structure. We now recall from \cite[Definition 5.1]{Ben-Moshe} the class of functors which behave well in this respect.

\begin{Def}\label{def:internal-left}
    Let $\MM,\NN$ denote $\DD$-modules in $\Pr^L_{\mathrm{st}}$, and let $f \in \Fun_{\DD}^L(\MM, \NN)$. We say that $f$ is an {\em internal left adjoint} if its right adjoint $f^R$ preserves colimits and satisfies the projection formula.
\end{Def}

\begin{Rem}
    Let $f$ be an internal left adjoint. By the discussion in \cite[Example 7.3.2.8 and Remark 7.3.2.9]{HA} the right adjoint $f^R$ acquires the structure of a $\DD$-linear functor and the unit and counit map of the adjunction $(f,f^R)$ can be promoted to $\DD$-linear natural transformation.
\end{Rem}

We observe that in favourable situations, the condition of being an internal left adjoint simplifies.

\begin{Lem}\label{lem:check-internal-adjoint}
    If $\DD$ is generated under colimits by dualizable objects, then $f$ is an internal left adjoint if and only if $f^R$ preserves colimits. 
\end{Lem}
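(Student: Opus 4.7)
The forward direction is immediate from the definition of internal left adjoint (\Cref{def:internal-left}), so the real content is the converse: assuming $f^R$ preserves colimits, I want to show $f^R$ automatically satisfies the projection formula, i.e. the map
\[
\mathrm{pr}_{d,n}\colon d \otimes f^R(n) \to f^R(d \otimes n)
\]
is an equivalence for every $d \in \DD$ and $n \in \NN$.

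The plan is to fix $n \in \NN$ and consider the full subcategory $\DD_0 \subseteq \DD$ of those $d$ for which $\mathrm{pr}_{d,n}$ is an equivalence. By \Cref{megalemma-proj-formula}(b), every dualizable object of $\DD$ lies in $\DD_0$. So, given the hypothesis that $\DD$ is generated under colimits by dualizable objects, it suffices to show that $\DD_0$ is closed under colimits in $\DD$.

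For this I would regard $\mathrm{pr}_{-,n}$ as a natural transformation between the two functors
\[
d \mapsto d \otimes f^R(n) \quad\text{and}\quad d \mapsto f^R(d \otimes n)
\]
from $\DD$ to $\MM$. Both are colimit-preserving in $d$: the first because the $\DD$-action on $\MM$ preserves colimits in each variable (as $\MM \in \Mod_{\DD}(\Pr^L_{\mathrm{st}})$), and the second because it factors as the composite of $d \mapsto d \otimes n$ (colimit-preserving for the same reason) followed by $f^R$, which preserves colimits by assumption. Since equivalences are detected object-wise and are stable under colimits in the arrow category, the collection $\DD_0$ of $d$ on which $\mathrm{pr}_{d,n}$ is an equivalence is closed under colimits in $\DD$. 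Combined with the fact that $\DD_0$ contains all dualizable objects, this shows $\DD_0 = \DD$, completing the proof.

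No step strikes me as a serious obstacle here; the key conceptual point is simply to recognize that the projection formula reduces, via \Cref{megalemma-proj-formula}(b), to a statement about a natural transformation between two colimit-preserving functors, so the hypothesis on $\DD$ is exactly what is needed to propagate the dualizable case to all of $\DD$.
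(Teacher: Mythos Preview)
Your proof is correct and follows essentially the same approach as the paper: the paper's one-line proof just cites \Cref{megalemma-proj-formula}(b), and your argument spells out the implicit reasoning behind that citation (namely, that the projection map is a natural transformation between colimit-preserving functors in $d$, so being an equivalence propagates from the generating dualizable objects to all of $\DD$).
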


\begin{proof}
This follows from \Cref{megalemma-proj-formula}(b).
\end{proof}

We now recall from \cite[Definition 5.4]{Ben-Moshe} a key finiteness property for objects in $\DD$-linear categories:

\begin{Def}\label{def:atomic}
    An object $m$ in a $\DD$-module $\MM\in\Mod_{\DD}(\Pr^L_{\mathrm{st}})$ is called $\DD$-\emph{atomic} if the associated $\DD$-linear functor $-\otimes m \colon \DD\to\MM$ is an internal left adjoint. When $\DD$ is clear from the context we simply say that $m$ is {\em atomic}. Explicitly, $m$ is $\DD$-atomic if and only if:
    \begin{enumerate}
        \item  The functor  
        \[ \hom_{\MM}(m,-):\MM\to\DD\]
    commutes with colimits, and 
    \item the projection map 
    \[ 
    \mathrm{pr}_{d, m'}\colon d\otimes\hom_{\MM}(m,m')\to \hom_{\MM}(m,d\otimes m')\]
    is an equivalence for all $d\in\DD$ and $m,m'\in\MM$.
    \end{enumerate}
    We denote by $\MM^{\DD\text{-}\mathrm{at}}$ the full subcategory of $\MM$ spanned by the $\DD$-atomic objects.
\end{Def}

We collect some examples of the above notion in the following lemma which has some overlap with \cite[Section 5]{Ben-Moshe}. 

\begin{Lem}\label{lem:atomics}
Consider $\DD \in \CAlg(\PrL)$ and $\MM \in \Mod_{\DD}(\PrL)$.
\begin{enumerate}
\item If $\DD=\Sp$, then $\MM^{\Sp\text{-}\mathrm{at}}=\MM^\omega.$
\item If $\MM=\DD$ as in \Cref{ex-internalhom}, then $\DD^{\DD\text{-}\mathrm{at}}=\DD^{\mathrm{dual}}.$
\item Internal left adjoints preserve atomic objects.
\item In general we have 
\[ \DD^{\dual}\otimes\MM^{\DD\text{-}\mathrm{at}}\subseteq \MM^{\DD\text{-}\mathrm{at}}. \]
\item Fully faithful functors in $\Mod_{\DD}(\Pr^L_{\mathrm{st}})$ detect $\DD$-atomic objects.
\item Fix a regular cardinal $\kappa$. We have $\MM^{\DD\text{-}\mathrm{at}}\subseteq \MM^\kappa$ for every $\DD$-module $\MM$ if and only if $\mathbb 1\in\DD^\kappa$. In particular, the category $\MM^{\DD\text{-}\mathrm{at}}$ is essentially small.
\end{enumerate}
\end{Lem}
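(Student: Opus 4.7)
My plan is to reduce each of \textbf{(a)}--\textbf{(f)} to the two defining conditions for $\DD$-atomicity from \Cref{def:atomic}: that $\hom_\MM(m,-)$ preserves colimits and that the projection formula holds. I would address the six parts in the order listed, since each one leans on the previous.

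For \textbf{(a)} and \textbf{(b)} the idea is as follows. In (a), the first condition on $m\in\MM$ is compactness, because $\Map_\MM(m,-)\colon\MM\to\Sp$ is automatically exact, so colimit-preservation reduces to preservation of filtered colimits. The projection formula is then free: both sides are colimit-preserving functors of $d\in\Sp$ that agree at the generator $d=\Sphere$. For (b), dualizable implies atomic because $\hom_\DD(m,-)\simeq\mathbb{D}(m)\otimes(-)$ preserves colimits and its projection formula is just associativity. For the converse, substituting $m'=\unit$ in the projection formula yields $\hom_\DD(m,-)\simeq\hom_\DD(m,\unit)\otimes(-)$, so $-\otimes m$ admits a right adjoint of the form $\hom_\DD(m,\unit)\otimes-$; unpacking the unit and counit at $\unit$ then supplies the evaluation and coevaluation maps satisfying the triangle identities, exhibiting $m$ as dualizable.

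For \textbf{(c)} and \textbf{(d)} the key observation is that internal left adjoints are closed under composition: compositions of colimit-preserving right adjoints preserve colimits, and projection formulas chain via \Cref{megalemma-proj-formula}(a), equivalently via the Beck--Chevalley description in \Cref{rem-proj-formula}. For (c), $\DD$-linearity of $f$ identifies $-\otimes f(m)\simeq f\circ(-\otimes m)$ as a composition of internal left adjoints. For (d), I would factor $-\otimes(d\otimes m)\simeq(-\otimes m)\circ(-\otimes d)$ and note that $-\otimes d$ is an internal left adjoint because $d\in\DD^{\dual}$ is $\DD$-atomic by (b).

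For \textbf{(e)}, I would exploit \Cref{rem-D-lin-functors}: full faithfulness of the $\DD$-linear $f$ provides a natural equivalence $\hom_\MM(m,-)\simeq\hom_\NN(f(m),f(-))$, whose right-hand side preserves colimits (by atomicity of $f(m)$ and $f$ being a left adjoint). Under this identification, together with the $\DD$-linearity $f(d\otimes m')\simeq d\otimes f(m')$, the projection-formula map for $m$ pulls back to the one for $f(m)$, hence is an equivalence. For \textbf{(f)}, the forward direction applies the hypothesis to $\MM=\DD$ and $m=\unit$, which is $\DD$-atomic by (b). Conversely, if $\unit\in\DD^\kappa$ and $m$ is atomic, then
\[ \Map_\MM(m,-)\simeq\Map_\DD(\unit,\hom_\MM(m,-)) \]
is a composition of a colimit-preserving functor with one preserving $\kappa$-filtered colimits, so $m\in\MM^\kappa$. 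For the essential smallness addendum, since $\DD$ is presentable we can choose $\kappa$ with $\unit\in\DD^\kappa$, and $\MM^\kappa$ is essentially small for presentable $\MM$.

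I expect the main subtleties to lie in (b), where the projection-formula hypothesis has to be promoted to explicit duality data, and in (e), where one must verify that the abstract projection-formula transformation is indeed identified with the corresponding one in $\NN$ under the internal-hom equivalence; both come down to careful naturality bookkeeping, the rest of the argument being formal manipulation of adjoints.
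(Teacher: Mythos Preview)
Your proposal is correct and follows essentially the same approach as the paper's proof. The only organizational difference is that you package parts (c) and (d) under the single observation that internal left adjoints are closed under composition, whereas the paper verifies (c) by the explicit adjunction formula $\hom_\NN(f(m),-)\simeq\hom_\MM(m,f^R(-))$ and proves (d) by first showing $d\otimes-\colon\MM\to\MM$ is an internal left adjoint and then invoking (c); both routes are equally valid and amount to the same underlying computation.
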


\begin{proof}
\leavevmode
\begin{enumerate}
\item First note that the given $\MM\in\Pr^L_{\mathrm{st}}$ admits a unique $\Sp$-module structure because $\Sp\in\Pr^L$ is an idempotent commutative algebra whose modules are exactly the stable categories. This easily implies that in this case, the internal hom is the usual delooping of mapping spaces, i.e. $\Omega^\infty\hom_{\MM}\simeq \Map_{\MM}$. So condition (a) in \Cref{def:atomic}, is exactly that $m\in\MM$ is compact (note that $\Omega^\infty$ preserves all colimits as it preserves filtered colimits and it is exact) while condition (b) is
vacuous because $\Sp$ is generated under colimits by the dualizable unit $\mathbb 1\in\Sp$. 
\item Recall from \Cref{ex-internalhom} that when considering $\DD$ as a $\DD$-module, the internal hom functor identifies with the usual enrichment of $\DD$ over itself. Condition (b) of \Cref{def:atomic} with $m'=\unit$ then agrees with the definition of dualizability of $m$, hence $\DD^{\DD\text{-}\mathrm{at}} \subseteq \DD^{\dual}$. For the converse, note that if $m\in \DD$ is dualizable, then $\hom_{\DD}(d,-)\simeq \mathbb{D}(d) \otimes -$ preserves all colimits giving condition (a). \Cref{megalemma-proj-formula}(a) shows that the collection of $m'$ for which the map in condition (b) of \Cref{def:atomic} is an equivalence is closed under tensoring with objects of $\DD$, and it contains $\unit$ by definition of dualizability of $m$. Therefore it must be an equivalence for all $m, d \in \DD$ showing the reverse containment. 
\item Let $f\colon \MM\to  \NN$ be an internal left adjoint with right adjoint $f^R$, and let $m\in\MM$ be $\DD$-atomic. To check
that $f(m)\in\NN$ is $\DD$-atomic, we use (\ref{adjoints-internalhom}) to see that
\[ 
\hom_{\NN}(f(m),-)\simeq\hom_{\MM}(m,f^R(-))
\]
commutes with colimits. Next for any $d\in\DD$ and $n\in\NN$, we see that the projection map $d \otimes \hom_{\NN}(f(m),n) \to \hom_{\NN}(f(m), d \otimes n)$ identifies under the above equivalence and the $\DD$-linearity of $f^R$ with the projection map 
$d \otimes \hom_{\MM}(m, f^R(n)) \to \hom_{\MM}(m, d \otimes f^R( n))$ which is an equivalence by atomicity of $m$.
\item Let $m\in\MM$ be $\DD$-atomic and $d\in\DD^{\dual}$ be given. We claim that the $\DD$-linear colimit-preserving functor $d \otimes - \colon \MM \to \MM$ is an internal left adjoint. Using this and (c), we deduce that $d \otimes m$ is $\DD$-atomic proving (d). Let us then prove the claim. Using the duality datum of $d$ one constructs unit and counit maps which verify that the right adjoint of $d \otimes - \colon \MM\to \MM$ is given by $\mathbb{D}(d)\otimes - \colon \MM \to \MM$, which preserves colimits. For $d'\in \DD$ and $m \in \MM$, the projection map takes the form $d' \otimes \mathbb{D}(d) \otimes m \to \mathbb{D}(d)\otimes d' \otimes m$ and so an equivalence.
\item We assume $f\colon \MM\to\NN$ to be fully faithful in $\Mod_{\DD}(\Pr^L_{\mathrm{st}})$, $m\in\MM$ and $f(m)\in\NN$ is $\DD$-atomic, and aim to prove that $m\in\MM$ is $\DD$-atomic.
Firstly, 
\[\hom_{\MM}(m,-)\simeq\hom_{\NN}(f(m),f(-))\]
commutes with colimits, see (\ref{adjoints-internalhom}).  Secondly, for $d\in\DD$ and $m'\in\MM$, we see that the projection map $d \otimes \hom_{\MM}(m,m') \to \hom_{\MM}(m, d \otimes m')$ identifies under the above equivalence and the $\DD$-linearity of $f$ with the projection map 
$d \otimes \hom_{\NN}(f(m), f(n)) \to \hom_{\NN}(f(m), d \otimes f( n))$ which is an equivalence by atomicity of $f(m)$.
\item For the forward implication we take $\MM=\DD$ and note that $\unit\in \DD^{\DD\text{-}\mathrm{at}}\subseteq \DD^{\kappa}$. For the backward implication, we observe that for $m\in\MM$,
\[ \Map_{\MM}(m,-)\simeq\Map_{\DD}(\mathbb 1,\hom_{\MM}(m,-)),\]
so if $m\in\MM^{\DD\text{-}\mathrm{at}}$ and $\mathbb 1\in\DD^\kappa$, then $m\in\MM^\kappa$. For the final claim, since $\DD$ is presentable there is some $\kappa_0$ such that $\mathbb 1\in\DD^{\kappa_0}$, and consequently $\MM^{\DD\text{-}\mathrm{at}}\subseteq\MM^{\kappa_0}$ is essentially small, since $\MM$ is presentable.
\end{enumerate}    
\end{proof}

\section{Local duality}\label{section:local}
As in the previous section we fix some $\DD\in\CAlg(\Pr^L_{\mathrm{st}})$ and work in the $\infty$-category of $\DD$-modules in $\Pr^L_{\mathrm{st}}$. The goal of this section is to extend the notion local duality context developed in \cite{BHV2018} to the world of $\DD$-modules. Given a $\DD$-module $\MM$ and a set of atomic objects $\K \subseteq \MM$, we define $\infty$-categories of $\K$-torsion, $\K$-local and $\K$-complete objects in $\MM$. Our definitions reduces to those of~\cite{BHV2018} when choosing $\MM=\DD$.
After listing basic properties that these categories satisfy, we focus our attention on the special case $\MM=\DD$ and collect some technical observations about torsion and complete objects which will be used in the next section. 

\begin{Def}\label{def-torsion-etc}
    Let $\MM$ be a $\DD$-module in $\Pr^L_{\mathrm{st}}$, and let $\K\subseteq \MM^{\DD\text{-}\mathrm{at}}$.
    \begin{enumerate}
        \item We let $\MM^{\K\text{-}\mathrm{tors}}$ denote the full subcategory of $\MM$ generated under colimits and tensors with $\DD$ by $\K$, and call its objects {\em $\K$-torsion}. 
        \item We let $\MM^{\K\text{-}\mathrm{loc}}$ denote the full subcategory of $\MM$ spanned by the objects right orthogonal to $\MM^{\K\text{-}\mathrm{tors}}$, and call its objects {\em $\K$-local}.
        \item We let $\MM^{\K\text{-}\mathrm{cpl}}$ denote the full subcategory of $\MM$ spanned by the objects right orthogonal to $\MM^{\K\text{-}\mathrm{loc}}$, and call its objects {\em $\K$-complete}. 
    \end{enumerate}
\end{Def}

There is the following succinct characterization of the local objects:

\begin{Lem}\label{lem-local-obj}
In the situation of \Cref{def-torsion-etc}, we have
\[ \MM^{\K\text{-}\mathrm{loc}}=\{ m\in\MM\,\mid\, \forall k\in \K: \hom_{\MM}(k,m)=0\}.\]
\end{Lem}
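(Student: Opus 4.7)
The plan is to exploit the tensor-internal-hom adjunction repeatedly, and prove both inclusions using the fact that $\MM^{\K\text{-tors}}$ is generated by $\K$ under colimits \emph{and} tensors with $\DD$. The key identity, valid for all $d\in\DD$ and $t,m\in\MM$, is
\[
\Map_{\MM}(d\otimes t, m)\simeq \Map_{\DD}(d,\hom_{\MM}(t,m)),
\]
coming from the adjunction in \Cref{rem-D-lin-functors}.

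For the forward inclusion, suppose $m$ is right orthogonal to $\MM^{\K\text{-tors}}$. For any $k\in\K$ and any $d\in\DD$, the object $d\otimes k$ lies in $\MM^{\K\text{-tors}}$ by definition, so $\Map_{\MM}(d\otimes k,m)\simeq 0$. By the displayed identity, $\Map_{\DD}(d,\hom_{\MM}(k,m))\simeq 0$ for every $d\in\DD$; taking $d=\hom_{\MM}(k,m)$ kills the identity map, forcing $\hom_{\MM}(k,m)\simeq 0$.

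For the reverse inclusion, assume $\hom_{\MM}(k,m)\simeq 0$ for all $k\in\K$, and consider the full subcategory
\[
\cat C := \{ t\in\MM \noloc \hom_{\MM}(t,m)\simeq 0\}.
\]
I would show that $\cat C$ contains $\MM^{\K\text{-tors}}$ by verifying it contains $\K$ (immediate from the hypothesis) and is closed under the two generating operations. Closure under colimits is automatic because $\hom_{\MM}(-,m)\colon\MM\op\to\DD$ sends colimits to limits, and the zero object is a limit. Closure under tensoring with $\DD$ is the small computation: for $t\in\cat C$ and $d\in\DD$, the chain of equivalences
\[
\Map_{\DD}(x,\hom_{\MM}(d\otimes t,m))\simeq \Map_{\MM}(x\otimes d\otimes t,m)\simeq \Map_{\DD}(x\otimes d,\hom_{\MM}(t,m))
\]
is zero for every $x\in\DD$, whence $\hom_{\MM}(d\otimes t,m)\simeq 0$. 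This gives $\MM^{\K\text{-tors}}\subseteq\cat C$, and finally for any $t\in\MM^{\K\text{-tors}}$ we obtain $\Map_{\MM}(t,m)\simeq\Map_{\DD}(\unit,\hom_{\MM}(t,m))\simeq 0$, i.e.\ $m\in\MM^{\K\text{-loc}}$.

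There is no real obstacle here; the only point requiring some care is the interplay between the ordinary mapping spaces (used to define right orthogonality) and the internal hom (appearing in the statement), and this is exactly what the closure of $\MM^{\K\text{-tors}}$ under tensoring with $\DD$ allows one to bridge.
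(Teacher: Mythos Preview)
Your proof is correct and follows essentially the same approach as the paper's. The only cosmetic difference is in the reverse inclusion: the paper works directly with the subcategory $\MM':=\{m'\in\MM\mid\Map_{\MM}(m',m)=0\}$ and checks that $\DD\otimes\K\subseteq\MM'$ in one step, whereas you pass through the internal-hom-vanishing subcategory $\cat C$ and convert back via $\Map_{\MM}(t,m)\simeq\Map_{\DD}(\unit,\hom_{\MM}(t,m))$ at the end; both arguments unwind to the same adjunction identity.
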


\begin{proof}
To see the inclusion from left to right, choose some $m\in\MM^{\K\text{-}\mathrm{loc}}$, $k\in \K$ and $d\in\DD$ and compute
\[ \Map_{\DD}(d,\hom_{\MM}(k,m)))\simeq \Map_{\MM}(d\otimes k,m)=0,\]
since $d\otimes k$ is torsion and $m$ is local. Since this holds for all $d\in\DD$, we conclude $\hom_{\MM}(k,m)=0$.\\
To see the reverse inclusion, fix some $m$ in the right hand side and consider
\[ \MM\supseteq\MM':=\{ m'\in\MM\,\mid\, \Map_{\MM}(m',m)=0\}.\]
To see that $m$ is local, we need to check that $\MM^{K\text{-}\mathrm{tors}}\subseteq \MM'.$ It is clear that $\MM'\subseteq\MM$ is closed under colimits, so it suffices to show that $\DD\otimes \K\subseteq\MM'$. To see this,
take some $d\in\DD, k\in \K$ and compute
\[ \Map_{\MM}(d\otimes k,m)\simeq \Map_{\DD}(d,\hom_{\MM}(k,m))=0.\]
\end{proof}

We record basic properties of the subcategories introduced in \Cref{def-torsion-etc}:

\begin{Prop}\label{lem:setuptors}
    Let $\MM$ be a $\DD$-module in $\PrL$, and let $\K\subseteq \MM^{\DD\text{-}\mathrm{at}}$ be a subset. 
    \begin{enumerate}
    \item The $\infty$-category $\MM^{\K\text{-}\mathrm{tors}}$ is presentable, and canonically a $\DD$-module in $\Pr^L_{\mathrm{st}}$ such that the inclusion $i_{tors}\colon \MM^{\K\text{-}\mathrm{tors}}\to \MM$ is $\DD$-linear and it acquires the structure of an internal left adjoint. We write $\Gamma=\Gamma_\K$ for the right adjoint of $i_{\mathrm{tors}}$.
    \item The category $\MM^{\K\text{-}\mathrm{loc}}$ is presentable and canonically a $\DD$-module in $\Pr^L_{\mathrm{st}}$ such that the inclusion $i_{\mathrm{loc}}\colon
    \MM^{\K\text{-}\mathrm{loc}}\to \MM$ is $\DD$-linear. The functor $i_{\loc}$ admits a right adjoint $\Delta=\Delta_{\K}$ and a left adjoint $L=L_{\K}$, and the latter acquires the structure of an internal left adjoint. 
    \item The subcategory $\MM^{\K\text{-}\mathrm{cpl}}\subseteq \MM$ is an accessible localization, so it is presentable and the inclusion $i_{\mathrm{cpl}}:\MM^{\K\text{-}\mathrm{cpl}}\to \MM$
    preserves limits. In particular $i_{\cpl}$ admits a left adjoint $\Lambda=\Lambda_\K$. Furthermore, the subcategory $\MM^{\K\text{-}\mathrm{cpl}}$ admits a $\DD$-module structure such that $\Lambda$ can be promoted to a $\DD$-linear functor.
    \item The units and counits of these adjunction participate in a fibre sequences of functors $  i_{\tors}\Gamma \to \mathrm{id}_{\MM}\to i_{\loc}L$ and $i_{\loc} \Delta \to \mathrm{id}_{\MM} \to i_{\cpl} \Lambda$.
    \item There is a diagram of adjoints
    \[
    \qquad\qquad
    \begin{tikzcd}
        \MM^{\K \text{-}\loc} \arrow[r, shift right, "i_\loc"'] & \arrow[l, shift right, "L"']\MM \arrow[r, shift right, "\Gamma"'] & \MM^{\K\text{-}\tors}\arrow[l, shift right, "i_\tors"'] 
    \end{tikzcd}
    \quad \text{in} \;\; \Mod_{\DD}(\PrL)
    \]
     where the top composite is a cofibre sequence, and the bottom composite is a fibre sequence. 
    \item There is a diagram of adjoints
     \[
    \begin{tikzcd}
        \MM^{\K \text{-}\loc} \arrow[r, shift left, "i_\loc"] & \arrow[l, shift left, "\Delta"]\MM \arrow[r, shift left, "\Lambda"] & \MM^{\K\text{-}\cpl}\arrow[l, shift left, "i_\cpl"] 
    \end{tikzcd} 
    \]
    where the top composite is a cofibre sequence in $\Mod_{\DD}(\PrL)$, and the bottom composite is a fibre sequence in $\PrL$.
    \end{enumerate}
\end{Prop}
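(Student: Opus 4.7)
The proof adapts the classical local duality framework of BHV2018 to the $\DD$-linear setting. The plan is to build the torsion approximation first, use it to produce the local part, and then handle completion by classical local duality. I would organize the argument in three steps, corresponding to the three subcategories, followed by a short step to package the recollement diagrams.

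First I would establish parts (a) and (b). Fix a regular cardinal $\kappa$ with $\unit \in \DD^\kappa$. Since the action $-\otimes m\colon \DD \to \MM$ preserves colimits, $\MM^{\K\text{-}\mathrm{tors}}$ equals the colimit-closure of the essentially small set $\DD^\kappa \otimes \K$, hence is presentable with colimit-preserving $\DD$-linear inclusion $i_\tors$; by the adjoint functor theorem, $i_\tors$ admits a right adjoint $\Gamma$. Define $\ell := \cof(i_\tors\Gamma \to \id)\colon \MM \to \MM$. Using full faithfulness of $i_\tors$ and closure of $\MM^{\K\text{-}\mathrm{tors}}$ under $\DD$-tensors, the counit-induced map $\Map_\MM(d \otimes k, i_\tors\Gamma m) \to \Map_\MM(d \otimes k, m)$ is an equivalence for all $d \in \DD$ and $k \in \K$; hence by Yoneda in $\DD$, the map $\hom_\MM(k, i_\tors\Gamma m) \to \hom_\MM(k, m)$ is an equivalence and $\hom_\MM(k, \ell m) = 0$, so $\ell m \in \MM^{\K\text{-}\mathrm{loc}}$ by \Cref{lem-local-obj}. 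Thus $\ell$ factors uniquely as $i_\loc \circ L$, and a direct mapping-space computation (using that $\Gamma x = 0$ for $x$ local) shows $L \dashv i_\loc$ and yields the first fibre sequence of (d). By \Cref{lem-local-obj} and the atomic projection formula, $\MM^{\K\text{-}\mathrm{loc}}$ is closed under limits, colimits, and $\DD$-tensors, so it is a presentable $\DD$-submodule with $\DD$-linear $i_\loc$; the right adjoint $\Delta$ is furnished by the adjoint functor theorem, and $L$ is $\DD$-linear by uniqueness. The internal-left-adjoint claims follow: $\Gamma$ preserves colimits because $i_\tors\Gamma = \mathrm{fib}(\id \to i_\loc L)$ does and $i_\tors$ is fully faithful; the projection formula for $\Gamma$ follows since $d \otimes -$ is exact and both $i_\loc$ and $L$ are $\DD$-linear, so the equivalence $d \otimes i_\tors\Gamma(m) \simeq i_\tors\Gamma(d\otimes m)$ descends through $i_\tors$, while for $L$ the projection formula is automatic from full faithfulness and $\DD$-linearity of $i_\loc$.

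For part (c), the subcategory $\MM^{\K\text{-}\mathrm{cpl}}$ is the right orthogonal of the presentable accessible subcategory $\MM^{\K\text{-}\mathrm{loc}}$, hence an accessible reflective subcategory of $\MM$; so $i_\cpl$ preserves limits, admits a left adjoint $\Lambda$, and $\MM^{\K\text{-}\mathrm{cpl}}$ is presentable. Classical local duality gives an equivalence $\Gamma i_\cpl\colon \MM^{\K\text{-}\mathrm{cpl}} \xrightarrow{\simeq} \MM^{\K\text{-}\mathrm{tors}}$ with inverse $\Lambda i_\tors$: both composites are computed using the first fibre sequence of (d) together with the vanishings $\Gamma i_\loc = 0$ and $\Lambda i_\loc = 0$ (the latter because any map from a local object to a complete object is null). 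Transporting the $\DD$-module structure across this equivalence makes $\MM^{\K\text{-}\mathrm{cpl}}$ a $\DD$-module, and $\Lambda = (\Lambda i_\tors) \circ \Gamma$ becomes $\DD$-linear as a composite. The second fibre sequence of (d), $i_\loc\Delta \to \id \to i_\cpl\Lambda$, is obtained dually by checking that the cofibre of the counit $i_\loc\Delta \to \id$ lies in $\MM^{\K\text{-}\mathrm{cpl}}$ via mapping-space vanishing against locals.

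Finally, diagrams (e) and (f) package the adjunctions obtained above. The adjunctions $L \dashv i_\loc$ and $i_\tors \dashv \Gamma$ assemble into (e) in $\Mod_{\DD}(\PrL)$, with top composite a cofibre sequence (realizing $L$ as the Verdier quotient of $\MM$ by $\MM^{\K\text{-}\mathrm{tors}}$) and bottom composite a fibre sequence (realizing $\Gamma$ as the kernel of $L$), both readable off from the first fibre sequence of (d) viewed as a $\DD$-linear recollement. Diagram (f) is dual, involving $\Lambda \dashv i_\cpl$ and $i_\loc \dashv \Delta$; its top row is a cofibre sequence in $\Mod_{\DD}(\PrL)$, while the bottom is only a fibre sequence in $\PrL$ because $i_\cpl$ is typically only lax $\DD$-linear—its $\DD$-linearity is inherited solely through the equivalence $\Gamma i_\cpl$ with torsion. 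The principal technical obstacle is the atomicity-driven verification that $\ell(m) \in \MM^{\K\text{-}\mathrm{loc}}$; once this is in hand, the remainder of the proposition follows by formal recollement manipulations.
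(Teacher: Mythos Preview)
Your proof is correct and reaches the same conclusions, but the organization differs from the paper's in two notable ways. First, the paper proves that $i_\tors$ is an internal left adjoint \emph{directly} in part (a): it checks that $\Gamma$ preserves colimits and satisfies the projection formula by testing against the jointly conservative family $\{\hom_{\MM^{\tors}}(k,-)\}_{k\in\K}$, using atomicity of each $k$. You instead first build the fibre sequence $i_\tors\Gamma \to \id \to i_\loc L$ and deduce these properties of $\Gamma$ from the $\DD$-linearity of $i_\loc L$. Second, and more substantially, the paper endows both $\MM^{\K\text{-}\loc}$ and $\MM^{\K\text{-}\cpl}$ with their $\DD$-module structures as \emph{Verdier quotients} in $\Mod_\DD(\PrL)$ (the cofibres of $i_\tors$ and $i_\loc$ respectively); this makes $L$ and $\Lambda$ $\DD$-linear by construction and renders the (co)fibre statements in (e) and (f) essentially tautological. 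You instead realize $\MM^{\K\text{-}\loc}$ directly as a $\DD$-submodule (via closure under tensors from the atomic projection formula), and equip $\MM^{\K\text{-}\cpl}$ by transporting along the torsion--complete equivalence $\Gamma i_\cpl$, which the paper defers to the subsequent \Cref{lem:cpltors}. Your route is more hands-on for the local part and neatly recycles local duality for the complete part; the paper's route is more uniform and avoids the forward reference. One point worth tightening: your ``$L$ is $\DD$-linear by uniqueness'' and the projection formula for $\Gamma$ via the fibre sequence both yield only abstract equivalences $d\otimes L(m)\simeq L(d\otimes m)$ and $d\otimes\Gamma(m)\simeq\Gamma(d\otimes m)$; one should still verify these agree with the specific Beck--Chevalley maps, as the paper does explicitly in its treatment of (e).
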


\begin{proof}
\leavevmode
\begin{enumerate}
    \item Since $\DD$ is presentable, there is a cardinal $\kappa$ such that $\DD$ is $\kappa$-compactly generated. Then $\MM^{\K\text{-}\mathrm{tors}}$ is generated in $\MM$ under colimits by a set of objects, namely $\DD^\kappa\otimes \K$, and is therefore presentable.\footnote{In general, if $\CC$ is presentable and $\CC_0$ is an essentially small subcategory, then the subcategory $\widetilde\CC_0\subseteq \CC$ generated under colimits in $\CC$ by $\CC_0$ is presentable. To see this, one chooses a regular cardinal $\kappa$ such that both $\CC$ is $\kappa$-compactly generated and all objects in $\CC_0$ are $\kappa$-compact. One then checks that $\widetilde \CC_0=\Ind_\kappa(\overline{\CC_0})$, where $\overline{\CC_0}$ is the subcategory of $\CC$ generated under $\kappa$-small colimits from $\CC_0$.}
It is further closed in $\MM$ under $\DD$-tensors and is thus uniquely a $\DD$-module in a way which makes $i_{\mathrm{tors}}$ a $\DD$-linear functor. It is clear by construction that $i_{\mathrm{tors}}$ preserves (and creates) colimits and so it admits a right adjoint $\Gamma$. 
    To prove that the inclusion is an internal left adjoint, it remains to see that the right adjoint $\Gamma$ preserves colimits and satisfies the projection formula. We note that each $k\in \K$ is also atomic in $\MM^{\K\text{-}\mathrm{tors}}$, by assumption on $\K$ and \Cref{lem:atomics}(e), and that the functors $\{ \hom_{\MM^{\K\text{-}\mathrm{tors}}}(k,-)\}_{k\in \K}$ are jointly conservative on $\MM^{\K\text{-}\mathrm{tors}}$ as torsion objects are generated under colimits and $\DD$-tensors by the atomic object of $\K$.
    So we can check both claims by computing suitable internal homs out of arbitrarily given $k\in \K$. For the commutation of $\Gamma$ with colimits, we have
    \begin{align*}
    \hom_{\MM^{\K\text{-}\mathrm{tors}}}(k,\colim_j \Gamma f(j)) & \simeq \colim_j\hom_{\MM^{\K\text{-}\mathrm{tors}}}(k,\Gamma(f(j)))\\
     & \simeq \colim_j\hom_{\MM}(i_{\mathrm{tors}}(k), f(j))\\
     & \simeq\hom_{\MM}(i_{\mathrm{tors}}(k), \colim_j f(j))\\
     & \simeq  \hom_{\MM^{\K\text{-}\mathrm{tors}}}(k, \Gamma(\colim_j f(j))).\\
     \end{align*} 
This shows that $\Gamma$ preserves colimits, and a similar computation, which we leave to the reader, establishes the projection formula.
\item We can describe $\MM^{\K\text{-}\mathrm{loc}}\subseteq \MM$ as the $W$-local objects in $\MM$, where $W$ is the set of maps $\{ d\otimes k\to 0, d\in\DD^\kappa, k\in \K$\} for $\kappa$ chosen as in part (a). It follows that $\MM^{\K\text{-}\mathrm{loc}}$ is an accessible localization of $\MM$, and so presentable, and the inclusion $i_{\mathrm{loc}}$ has a left adjoint $L$. Using the definition of local objects given in \Cref{lem-local-obj} and atomicity of $\K$, one sees that $\MM^{\mathrm{loc}}\subseteq\MM$ is stable under colimits, and hence $i_{\mathrm{loc}}$ preserves (and creates) colimits. In particular $i_\loc$ admits a right adjoint $\Delta$. It remains to prove that local objects form a $\DD$-submodule of $\MM$ and that $L$ is an internal left adjoint, which we postpone for the moment. 
\item The first part uses the same argument as in (b) using that $\MM^{\K\text{-}\mathrm{loc}}$ is presentable and thus generated under colimits by a small set of objects. In particular we deduce the existence of the left adjoint $\Lambda$. As for local objects, we postpone the proof that $\MM^{\K\text{-}\cpl}$ is a $\DD$-module and that $\Lambda$ is $\DD$-linear. 
\item We next prove that for every $m\in\MM$, the diagram determined by the (co)unit maps
\[ 
i_{\tors }\Gamma(m)\to m \to i_{\loc}L(m)
\]
is a fiber sequence. As there are no nonzero map between torsion and local objects, we obtain a map
\[ 
f\colon i_{\tors }\Gamma(m)\to F(m):=\mathrm{fib}(m\to i_{\loc}L(m)).
\]
Note that $i_{\tors}\Gamma i_\loc L(m)\simeq 0$ as it is both torsion and local. It follows that $F(m)$ is torsion, hence the universal property of $i_\tors\Gamma(m)\to m$ yields a map $F(m)\to i_\tors\Gamma(m)$, which is inverse to $f$.
 The argument for the existence of a fibre sequence $i_{\loc}\Delta \to \mathrm{id}_{\MM} \to i_{\cpl}\Lambda$ is similar: since $\Map_{\MM}(i_{\loc}\Delta (m), i_{\cpl}\Lambda (m))=0$, we get map 
\[
g \colon C(m):=\cof (i_{\loc}\Delta(m) \to m) \to i_{\cpl}\Lambda(m).
\]
A simple yoga with adjunctions shows that $C(m)$ is complete, so we get an induced map $i_{\cpl}\Lambda(m) \to C(m)$ by the universal property of $\Lambda$ which is an inverse for $g$.   
\item We first verify that the diagram in (e) gives (co)fibre sequences in $\PrL$. Now recall that limits in $\PrL$ are calculated in $\Cat_\infty$ and that colimits are calculated by passing to right adjoints and taking the limit of the corresponding diagram. Thus we are only left to check that $\MM^{\K\text{-}\loc} \to \MM \to \MM^{\K\text{-}\tors}$ is a fibre sequence, which is clear by the existence of the fibre sequence $i_{\tors}\Gamma m \to m \to i_{\loc}Lm$ for all $m \in \MM$. 
In particular this means that $\MM^{\K\text{-}\loc}$ is a Verdier quotient of the $\DD$-module $\MM$ by the $\DD$-submodule $\MM^{\K\text{-}\tors}$. Since $\Mod_{\DD}(\PrL)$ is closed under colimits and these are preserved by the forgetful functor $\Mod_{\DD}(\PrL) \to \PrL$, we deduce that $\MM^{\K\text{-}\loc}$ admits a $\DD$-module structure making the functor $L$ into a $\DD$-linear functor. We next show that $L$ is an internal left adjoint which in turn implies $\DD$-linearity of $i_{\loc}$. We have already check in (b) that $i_\loc$ preserves colimits so we are only left to check that the projection formula holds. Unravelling the definitions, we need to verify that the projection map $d \otimes i_{\loc}(l) \to i_{\loc}(d \otimes l):=i_{\loc}(L(d \otimes i_{\loc}(l)))$ is an equivalence for all $d \in \DD$ and local objects $l$. Using the definition of local object given in \Cref{lem-local-obj} and atomicity of $\K$ one sees that the source of the projection map is local (as the target). So we can check that the map is an equivalence after applying $L$. This is now clear since the map simplifies to the identity map. We have finally proved all of part (b). In particular the diagram in (e) lift to $\Mod_{\DD}(\PrL)$ as claimed and it is a (co)fibre sequence in $\DD$-modules by the above discussion and the behaviour of (co)limits in module categories. This concludes part (e).
\item  Next, consider the diagram of adjoints in (f). By the existence of the fibre sequence $i_{\loc}\Delta \to \mathrm{id}_{\MM} \to i_{\cpl}\Lambda$, it is clear that the bottom composite is a fibre sequence in $\PrL$, and so the top composite is a cofibre sequence in $\PrL$ too. In particular this means that $\MM^{\K\text{-}\cpl}\subseteq\MM$ is a Verdier quotient of the $\DD$-module $\MM$ by the $\DD$-submodule $\MM^{\K\text{-}\mathrm{loc}}$ and so $\MM^{\K\text{-}\cpl}$ admits a $\DD$-module structure making the functor $\Lambda$ into a $\DD$-linear functor proving also the last claim of (c). This also concludes part (f).
\end{enumerate} 
\end{proof}

\begin{Rem}
    We warn the reader that the inclusion functor is not $\DD$-linear in general.  For an example take $\MM=\DD=\mathsf{D}(\Z)$ and $\K=\{\Z/p\}$ so that $\MM^{\K\text{-}\cpl}\simeq \mathsf{D}(\Z)^{p\text{-}\cpl}$ is the $\infty$-category of derived $p$-complete $\Z$-modules. Unravelling the definitions, the $\DD$-linearity of $i_\cpl$ would give us an equivalence
    \[
    m \otimes_{\Z} c \simeq \Lambda(\Lambda(m) \otimes_{\Z} c))
    \]
    for all $m\in \mathsf{D}(\Z)$ and $c\in \mathsf{D}(\Z)^{p\text{-}\cpl}$. Taking $m=\bbQ$ and $c=\unit=\Z_p$ in the above, we would get an equivalence 
    \[
    \mathbb{Q} \otimes_{\Z} \Z_p \simeq \Lambda(\Lambda(\mathbb{Q}) \otimes_{\Z} \Z_p)\simeq 0
    \]
    which is a contradiction.
\end{Rem}

We record the functoriality of the above localizations.

\begin{Lem}\label{lem:functoriality-of-loc}
Let $\MM$ and $\MM'$ be $\DD$-modules in $\Pr^L_{\mathrm{st}}$, let $\K \subseteq\MM^{\DD\text{-}\mathrm{at}}$ be a subset of atomic objects and $f\colon \MM\to\MM'$ an internal left adjoint. 
\begin{itemize}
\item[(a)] We have $f(\K)\subseteq(\MM')^{\DD\text{-}\mathrm{at}}$ and $f(\MM^{\K\text{-}\mathrm{tors}})\subseteq(\MM')^{f(\K)\text{-}\mathrm{tors}}$
\item[(b)] The composition $L'\circ f$ factors uniquely through $L$, and gives a commutative diagram of internal left adjoints
\[
\begin{tikzcd}
    \MM \ar[r, "L"]\ar[d, "f"']& \MM^{\K\text{-}\mathrm{loc}} \ar[d, "f^{\mathrm{loc}}"] \\
    \MM'\ar[r, "L'"] & (\MM')^{f(\K)\text{-}\mathrm{loc}}.
    \end{tikzcd}
\]
\item[(c)] There is a map of fiber sequences of functors 
\[ \begin{tikzcd}
    f i_\tors \Gamma \ar[r]\ar[d,"\alpha"', dashed] & f \ar[r]\ar[d, "="] & f i_\loc L\ar[d,"\beta", dashed]\\
    i_\tors'\Gamma' f \ar[r] & f \ar[r] & i'_\loc L'f, 
    \end{tikzcd}
    \]
    and the following are equivalent
    \begin{itemize}
         \item[i)] $\alpha$ is an equivalence.
        \item[ii)] $\beta$ is an equivalence.
        \item[iii)] $f$ preserves local objects.
        \end{itemize}
\item[(d)] Assume that $f\colon \DD\to\DD'$ is a map in $\CAlg(\Pr^L_{\mathrm{st}})$ whose right adjoint preserves colimits and satisfies the projection formula. Note this is a special case of the above by taking $\MM:=\DD$ and $\MM':=\DD'$ endowed with the evident $\DD$-linear structures. Then the equivalent conditions in part (c) hold true. Moreover, $L$ is smashing, i.e. 
\[ i_\loc L \simeq (i_\loc L)(\mathbb 1_{\DD})\otimes -.\]
Finally, the Beck-Chevalley transformations
\[ 
Lf^R\xrightarrow{\sim}(f^\loc)^RL' \quad \mathrm{and} \quad  f i_\loc\xrightarrow{\sim} i'_\loc f^\loc
\]
are equivalences.
\end{itemize}
\end{Lem}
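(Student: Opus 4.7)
The plan for part (a) is as follows: the first statement follows from \Cref{lem:atomics}(c) applied to the internal left adjoint $f$, and the second from the fact that $f$ preserves the two operations---colimits and $\DD$-tensors---used to generate $\MM^{\K\text{-}\tors}$ from $\K$.

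For part (b), I would first observe that $L$ is a Dwyer-Kan localization at the class of morphisms with torsion fibre (by \Cref{lem:setuptors} and \Cref{lem:BousDK}), and that by (a) the composite $L'f$ annihilates $\MM^{\K\text{-}\tors}$; hence $L'f$ factors uniquely as $f^{\loc} L$ for some colimit-preserving functor $f^{\loc}$. Since $L$ is the Verdier quotient of $\MM$ by the $\DD$-submodule $\MM^{\K\text{-}\tors}$ in $\Mod_{\DD}(\PrL)$, this factorization automatically refines to $\DD$-linear data. To check that $f^{\loc}$ is an internal left adjoint, I would take right adjoints of the equivalence $f^{\loc} L \simeq L' f$ and use $L i_\loc \simeq \id$ to identify $(f^{\loc})^R \simeq L f^R i'_\loc$; this is a composition of colimit-preserving functors, and the projection formula follows by chaining the $\DD$-linearity of $L$ and $i'_\loc$ with the projection formula for $f^R$.

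For part (c), the natural transformations $\alpha, \beta$ are to be constructed from Beck-Chevalley arguments. Since $f i_\tors \Gamma(m)$ is torsion by (a), the counit $f i_\tors \Gamma(m) \to f(m)$ factors uniquely through $i'_\tors \Gamma' f(m)$, yielding $\alpha$; dually, $\beta$ is the composite
\[ f i_\loc L \to i'_\loc L' f i_\loc L \simeq i'_\loc f^{\loc} L i_\loc L \simeq i'_\loc f^{\loc} L \simeq i'_\loc L' f \]
obtained from the $L'$-unit and the equivalences $f^{\loc} L \simeq L' f$ and $L i_\loc \simeq \id$. The resulting morphism of fibre sequences gives $(i) \Leftrightarrow (ii)$. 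For $(ii) \Leftrightarrow (iii)$, the image of $i_\loc L$ is precisely $\MM^{\K\text{-}\loc}$, so $\beta$ being an equivalence forces $f i_\loc L(m)$ to be local, whence $(iii)$; conversely, $(iii)$ makes the first map in the chain defining $\beta$ an equivalence.

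For part (d), the heart of the matter is verifying $(iii)$, and this will be the main obstacle. I would fix $l \in \DD^{\K\text{-}\loc}$ and $k \in \K$; by \Cref{lem:atomics}(b), $\K \subseteq \DD^{\DD\text{-}\mathrm{at}} = \DD^{\dual}$, so $k$ is dualizable in $\DD$ and $f(k)$ is dualizable in $\DD'$ with dual $f(\mathbb{D}(k))$ since $f$ is symmetric monoidal. Then for any $d \in \DD$,
\[ \Map_{\DD'}(f(d) \otimes f(k), f(l)) \simeq \Map_{\DD'}(f(d), f(\mathbb{D}(k) \otimes l)) \simeq \Map_{\DD'}(f(d), f(\hom_{\DD}(k,l))) = 0, \]
since $\hom_{\DD}(k, l) \simeq \mathbb{D}(k) \otimes l = 0$ by \Cref{lem-local-obj}. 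This shows $\hom_{\MM'}(f(k), f(l)) = 0$, so $f(l)$ is local by \Cref{lem-local-obj}, establishing $(iii)$. The smashing property will then follow from the $\DD$-linearity of $i_\loc L: \DD \to \DD$ (both factors being $\DD$-linear by \Cref{lem:setuptors}), giving $i_\loc L(d) \simeq d \otimes i_\loc L(\unit)$. The Beck-Chevalley equivalence $f i_\loc \simeq i'_\loc f^{\loc}$ is then immediate from $(iii)$ via $f i_\loc(l) \simeq i'_\loc L' f i_\loc(l) \simeq i'_\loc f^{\loc}(l)$. Finally, for $L f^R \to (f^{\loc})^R L'$, I would invoke \Cref{lem-equivalence-BC}: producing any equivalence suffices. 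Starting from $(f^{\loc})^R \simeq L f^R i'_\loc$ one obtains $(f^{\loc})^R L' \simeq L f^R i'_\loc L'$, and the $L'$-unit induces $L f^R \to L f^R i'_\loc L'$. This is an equivalence because $L f^R$ annihilates $\DD'^{\tors}$: indeed, for torsion $t$ and local $l$, $\Map_{\DD}(L f^R(t), l) \simeq \Map_{\DD'}(t, f i_\loc(l)) = 0$ by $(iii)$ and torsion-local orthogonality.
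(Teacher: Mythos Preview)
Your proposal is correct overall and closely parallels the paper's proof for parts (a)--(c); the one substantive difference is in part (d). To verify condition (iii), the paper uses the projection formula for $f^R$ to obtain $f^Rf\simeq f^R(\unit_{\DD'})\otimes(-)$, which manifestly preserves local objects, and then computes $\hom_{\DD'}(f(k),f(l))\simeq\hom_{\DD}(k,f^Rf(l))=0$ via the adjunction. You instead exploit \Cref{lem:atomics}(b) to see that each $k\in\K$ is dualizable and then use that $f$ is symmetric monoidal. Your route is more elementary in that it does not invoke the projection formula for this step, but note a small imprecision: the displayed computation of $\Map_{\DD'}(f(d)\otimes f(k),f(l))$ only tests against objects of the form $f(d)$, which need not detect zero in $\DD'$. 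The clean way to phrase your idea is to observe directly that $\hom_{\DD'}(f(k),f(l))\simeq f(\mathbb D k)\otimes_{\DD'} f(l)\simeq f(\mathbb D k\otimes_{\DD} l)=f(0)=0$; this is implicit in your chain but should be stated as the actual argument rather than the mapping space version. For the Beck--Chevalley equivalence $Lf^R\simeq(f^{\loc})^RL'$, both you and the paper reduce to \Cref{lem-equivalence-BC}; the paper produces the abstract equivalence via a smashing computation, whereas your identification $(f^{\loc})^R\simeq Lf^Ri'_\loc$ from part (b) together with the observation that $Lf^R$ annihilates torsion gives a pleasantly direct alternative.
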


\begin{proof}
\leavevmode
    \begin{itemize}
        \item[(a)] The first inclusion is a restatement of \Cref{lem:atomics}(d). For the second, we observe that 
        \[ 
        X:=\{m\in\MM\,\mid\, f(m)\in(\MM')^{F(\K)\text{-}\mathrm{tors}}\} \subseteq \MM
        \]
        is stable under colimits and tensors with $\DD$. 
        Since clearly $\K\subseteq X$, we get $\MM^{\K\text{-}\mathrm{tors}}\subseteq X$, i.e. $f(\MM^{\K\text{-}\mathrm{tors}})\subseteq (\MM')^{f(\K)\text{-}\mathrm{tors}}$.
        \item[(b)] By assumption and \Cref{lem:setuptors} (b), the composition $L'\circ f$ is an internal left adjoint, and it obviously annihilates $\K$. This formally implies the claim.
        \item[(c)] We have fiber sequences $i_\tors\Gamma\to\id_{\MM}\to i_\loc L$ 
        and $i'_\tors \Gamma'\to\id_{\MM'}\to i'_\loc L'$ which 
        by post and pre-composing with $f$ yield the displayed fiber sequences. Part (a)
        implies that the composition $f i_\tors \Gamma\to i'_\loc L'f$ is zero, giving rise to $\alpha$ and $\beta$ along with an identification 
        $\mathrm{fib}(\alpha)\simeq\Sigma^{-1}\mathrm{fib}(\beta)$ of functors valued in torsion objects. This makes 
        the equivalence of $i)$ and $ii)$ evident. If $iii)$ holds, then $\mathrm{fib}(\beta)$ is additionally local, hence 
        zero, showing that $i)$ and $ii)$ hold true. If $i)$ and $ii)$ hold true, and $m\in\MM$ is local, 
        then we see that
        \[ f(m)\simeq f(i_\loc Lm)\stackrel{\beta}{\simeq} i'_\loc L'f(m)
        \] 
        is local, showing that   $(iii)$ holds.
        \item[(d)] In this special case, the projection formula shows that 
        \[ f^Rf\simeq f^R(\mathbb 1_{\DD'})\otimes (-)\]
        holds, which implies that $f^Rf$ preserves local objects because local objects are a $\DD$-submodule. Now, if $d\in\DD$ is local and $k\in{\K}$ is arbitrary, this implies that
        \[ \hom_{\DD'}(f(k),f(d))\simeq\hom_{\DD}(k,f^Rf(d)))=0,\]
        showing that $f(d)$ is local. Thus $f$ preserves local objects. Using that the composite functor $i_\loc L$ is $\DD$-linear, we see that
        \[ i_\loc L(-)\simeq  i_\loc L(\unit_{\DD} \otimes -)\simeq (i_\loc L)(\mathbb 1_{\DD})\otimes (-)\]
        showing that $L$ is smashing.
        The same argument also applies to $\DD'$ showing that $L'$ is also smashing.  
        For the first BC-transformation, we compute
        \begin{align*} 
        i_\loc L f^R  & \simeq  (i_\loc L)(\mathbb 1_{\DD})\otimes f^R(-)  & &\text{since } L \text{ is smashing}\\
                      & \simeq  f^R ( (fi_\loc L)(\mathbb 1_{\DD})\otimes -) & &\text{since }f^R \text{ is } \DD\text{-linear} \\
                      & \simeq  f^R((i'_\loc L'f)(\mathbb 1_{\DD})\otimes -)  & & \text{by part (c)}\\
                      & \simeq  f^R((i'_\loc L')( \mathbb 1_{\DD'})\otimes -) & &\text{as } 
 f \text{ is symmmetric monoidal} \\
                      & \simeq  (L'f)^RL' & & \text{since } L' \text{  is smashing}\\
                      & \simeq  (f^\loc L)^RL' & &  \text{by part (b)}\\
                      & \simeq  i_\loc(f^\loc )^RL'. & & \\
        \end{align*}
        Since $i_\loc$ is fully faithful we have an abstract equivalence $L f^R \simeq (f^{\loc})^R L'$, and so we can conclude by \Cref{lem-equivalence-BC}. For the final claim, we note that the require BC-map is given by the composite 
\[
fi_\loc x \xrightarrow[\sim]{\mathrm{unit} }i'_\loc L' f i_\loc x\xrightarrow{\sim} i'_\loc f^\loc Li_\loc x \xrightarrow[\sim]{ \mathrm{counit}} i'_\loc f^\loc x 
\]
where the middle arrow is the natural equivalence witnessing the commutativity of the above diagram. We observe that the first map is an equivalence since $f$ preserves local objects. The last map is an equivalence since $L$ is a localization.
        \end{itemize}
\end{proof}

We next establish the following generalization of \cite{DwyerGreenlees02}.

\begin{Lem}\label{lem:cpltors}
    Let $\MM$ be a $\DD$-module in $\Pr^L_{\mathrm{st}}$, and let $\K$ be a set of atomic objects of $\MM$. Then the functors $\Lambda i_{\mathrm{tors}}\colon\MM^{\K\text{-}\mathrm{tors}}\to \MM^{\K\text{-}\mathrm{cpl}}$ and $\Gamma i_{\mathrm{cpl}}\colon\MM^{\K\text{-}\mathrm{cpl}}\to \MM^{\K\text{-}\mathrm{tors}}$ are mutually inverse equivalences. 
\end{Lem}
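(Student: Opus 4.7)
The plan is to show that both composites $(\Gamma i_\cpl) \circ (\Lambda i_\tors)$ and $(\Lambda i_\tors) \circ (\Gamma i_\cpl)$ are equivalent to the identity. The two inputs I would use are the orthogonality relations from \Cref{def-torsion-etc} and the two fibre sequences of \Cref{lem:setuptors}(d).

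First I would establish the vanishing results $\Gamma i_\loc \simeq 0$ and $\Lambda i_\loc \simeq 0$. For the former, given $l \in \MM^{\K\text{-}\mathrm{loc}}$, the definition of local gives $\Map_\MM(i_\tors(t), i_\loc(l)) \simeq 0$ for every torsion $t$, so by adjunction $\Map(t, \Gamma i_\loc(l)) \simeq 0$; since $\Gamma i_\loc(l)$ lies in $\MM^{\K\text{-}\mathrm{tors}}$, specialising $t$ to $\Gamma i_\loc(l)$ forces it to be zero. For the latter, the analogous argument using that complete objects are by definition right orthogonal to local objects yields $\Map_{\MM^{\K\text{-}\mathrm{cpl}}}(\Lambda i_\loc(l), c) \simeq 0$ for every complete $c$, and specialising $c$ to $\Lambda i_\loc(l)$ gives the vanishing.

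Second, since $i_\tors$, $i_\loc$, and $i_\cpl$ are fully faithful by \Cref{lem:setuptors}, the standard unit/counit arguments immediately give $\Gamma i_\tors \simeq \mathrm{id}$ and $\Lambda i_\cpl \simeq \mathrm{id}$.

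Finally, I would evaluate the two fibre sequences of \Cref{lem:setuptors}(d) at suitably chosen objects and apply the appropriate functor. For $t \in \MM^{\K\text{-}\mathrm{tors}}$, applying $\Gamma$ to the fibre sequence
\[ i_\loc \Delta i_\tors(t) \to i_\tors(t) \to i_\cpl \Lambda i_\tors(t) \]
and invoking $\Gamma i_\loc \simeq 0$ together with $\Gamma i_\tors \simeq \mathrm{id}$ produces the required equivalence $t \xrightarrow{\sim} \Gamma i_\cpl \Lambda i_\tors(t)$. Symmetrically, for $c \in \MM^{\K\text{-}\mathrm{cpl}}$, applying $\Lambda$ to the fibre sequence
\[ i_\tors \Gamma i_\cpl(c) \to i_\cpl(c) \to i_\loc L i_\cpl(c) \]
and using $\Lambda i_\loc \simeq 0$ together with $\Lambda i_\cpl \simeq \mathrm{id}$ yields $\Lambda i_\tors \Gamma i_\cpl(c) \xrightarrow{\sim} c$. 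I do not anticipate a significant obstacle here: this is the standard Greenlees--May / torsion--completion equivalence packaged in the present $\DD$-linear setup, and both ingredients (the vanishing of the mixed composites $\Gamma i_\loc$, $\Lambda i_\loc$ and the two fibre sequences) are already provided by the preceding results.
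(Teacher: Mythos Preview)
Your argument is correct and complete: the vanishings $\Gamma i_\loc\simeq 0$, $\Lambda i_\loc\simeq 0$ and the identifications $\Gamma i_\tors\simeq\mathrm{id}$, $\Lambda i_\cpl\simeq\mathrm{id}$ are exactly as you describe, and feeding the two fibre sequences of \Cref{lem:setuptors}(d) through $\Gamma$ and $\Lambda$ (both exact, being adjoints between stable categories) yields the two natural equivalences you need.

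The paper takes a different, more conceptual route. It observes that $\Gamma$ preserves colimits (by \Cref{lem:setuptors}(a)) and hence admits a further right adjoint $R$; then \Cref{cor:fully-faithful-adjunction} forces $R$ to be fully faithful, so $\Gamma\colon\MM\to\MM^{\K\text{-}\tors}$ is itself a Bousfield localization. Since $\Lambda$ is also a Bousfield localization, and both have kernel $\MM^{\K\text{-}\loc}$ (which the paper checks directly), the two localizations agree and the induced functors between $\MM^{\K\text{-}\tors}$ and $\MM^{\K\text{-}\cpl}$ are equivalences. This approach explains why the Dwyer--Kan material of \Cref{section:DK} is in the paper at all, and it packages the result as ``two localizations with the same kernel''; your approach is more hands-on, avoids \Cref{cor:fully-faithful-adjunction} entirely, and makes the inverse equivalences explicit via the fibre sequences. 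Both are standard ways to establish the torsion--completion equivalence.
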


\begin{proof}
We have a chain of adjunctions $i_{\mathrm{tors}}\dashv \Gamma\dashv R$, as $\Gamma$ is colimit-preserving by \Cref{lem:setuptors}(a). \Cref{cor:fully-faithful-adjunction} shows that $R$ is fully faithful, so that $\Gamma$ is a  Bousfield localization.
Since all the categories considered are stable, it suffices to prove that its kernel consists of $\MM^{\K\text{-}\mathrm{loc}}=\mathrm{ker}(\Lambda)$. Now $\Gamma(m)=0$ if and only if for all $x\in\MM^{\K\text{-}\mathrm{tors}}$, $\hom_{\MM^{\K\text{-}\mathrm{tors}}}(x,\Gamma(m)) = 0$ if and only if this is so for all $x\in \K$, if and only if $\hom_{\MM}(i_{\mathrm{tors}}(x),m)=0$, if and only if $m\in \MM^{\K\text{-}\mathrm{loc}}$. 
    \end{proof}

We next recall that for a finite set of atomic objects, we have the following Morita-style description of the torsion objects.

\begin{Lem}
Let $\MM\in\Mod_{\DD}(\Pr^L_{\mathrm{st}})$ be given, and let $m\in \MM^{\DD\text{-}\mathrm{at}}$ be an atomic object of $\MM$. Then there is a $\DD$-linear equivalence
\[ \mathrm{RMod}_{\mathrm{end}_{\MM}(m)}(\DD) \xrightarrow{\sim} \MM^{\{m\}\text{-}\mathrm{tors}}\]
sending $\mathrm{end}_{\MM}(m)$ to $m$.
\end{Lem}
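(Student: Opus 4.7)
The plan is to apply the $\infty$-categorical monadicity theorem \cite[Theorem 4.7.3.5]{HA} to the adjunction
\[ (-)\otimes m \colon \DD \rightleftarrows \MM^{\{m\}\text{-}\mathrm{tors}} \colon \hom_{\MM}(m,-)\]
obtained by corestricting the canonical adjunction of \Cref{rem-D-lin-functors} to the torsion subcategory. This corestriction is legitimate since $\MM^{\{m\}\text{-}\mathrm{tors}}$ is closed in $\MM$ under colimits and $\DD$-tensors and contains $m$, by \Cref{lem:setuptors}(a). Atomicity of $m$ (\Cref{def:atomic}) ensures that both functors are $\DD$-linear and colimit-preserving, and that the resulting monad on $\DD$ is computed via the projection formula as $\hom_{\MM}(m, d\otimes m)\simeq d\otimes R$, where $R := \mathrm{end}_{\MM}(m)$. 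Thus monad-algebras in $\DD$ are precisely right $R$-modules.

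The only non-formal step is verifying the hypotheses of monadicity: the right adjoint $\hom_{\MM}(m,-)$ must preserve $G$-split geometric realizations (immediate from the first clause of atomicity, which gives commutation with all colimits) and be conservative on $\MM^{\{m\}\text{-}\mathrm{tors}}$. For conservativity, given $x\in \MM^{\{m\}\text{-}\mathrm{tors}}$ with $\hom_{\MM}(m,x)\simeq 0$, I would introduce
\[ \mathcal{X} := \{y\in \MM \noloc \hom_{\MM}(y,x)\simeq 0\}.\]
This is closed under colimits in the first variable, as $\hom_{\MM}(-,x)$ sends colimits to limits, and under $\DD$-tensors via the standard identification $\hom_{\MM}(d\otimes y,x)\simeq \hom_{\DD}(d,\hom_{\MM}(y,x))$. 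By hypothesis $m\in\mathcal{X}$, hence $\mathcal{X}$ contains the entire subcategory generated under colimits and $\DD$-tensors by $m$, namely $\MM^{\{m\}\text{-}\mathrm{tors}}$; in particular $x\in\mathcal{X}$, which forces $\id_x\simeq 0$ and so $x\simeq 0$. Monadicity then yields an equivalence $\MM^{\{m\}\text{-}\mathrm{tors}} \xrightarrow{\sim} \mathrm{RMod}_R(\DD)$, whose inverse is the desired functor sending $R$ to $m$.

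For the $\DD$-linear enhancement, since $(-)\otimes m$ is an internal left adjoint, the whole construction takes place internally in $\Mod_{\DD}(\Pr^L_{\mathrm{st}})$: the monad $(-)\otimes R$ is a $\DD$-linear monad, $\mathrm{RMod}_R(\DD)$ inherits its natural $\DD$-module structure from the commutative algebra structure on $\DD$, and the comparison functor acquires a canonical $\DD$-linear refinement. I expect no genuine obstacle; the only points requiring care are the conservativity argument above and the book-keeping of the handedness convention (i.e.\ that $R=\mathrm{end}_{\MM}(m)$ acts on $\hom_{\MM}(m,-)$ from the right), both of which are routine once the monadic setup is in place.
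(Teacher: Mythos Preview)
Your proposal is correct and follows essentially the same approach as the paper: the paper invokes \cite[Proposition 4.8.5.8]{HA} directly, which is precisely the packaged $\DD$-linear Barr--Beck/Morita statement whose proof is the monadicity argument you spell out. The only cosmetic difference is in the conservativity step: the paper observes that $\hom_{\MM}(m,t)=0$ means $t$ is $\{m\}$-local (by \Cref{lem-local-obj}) as well as torsion, hence zero, whereas you run the equivalent generation argument by hand.
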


\begin{proof}
This is a direct application of \cite[Proposition 4.8.5.8]{HA}, 
taking $\MM=\MM^{\{m\}\text{-}\mathrm{tors}}$ and $\CC=\DD$ there.
Most assumptions of that result are clearly met, while points (3),(4) and (6) are 
fulfilled by the definition of $m$ being atomic. It remains
to check (5) which requires that $\hom_{\MM}(m,-)$ be conservative on $\MM^{\K\text{-}\mathrm{tors}}$. Indeed, if a torsion object $t$ satisfies
$\hom_{\MM}(m,t)=0$, then it is also 
local, and hence $t=0$. 
\end{proof}

\subsection{A special case}

In this subsection we specialize some definitions and results from the previous subsection. We will work with $\CC \in \CAlg(\PrL)$, and write $\unit$ for the unit object, $\hom_{\CC}$ for the internal hom object and $\mathbb{D}$ for the functional dual.

\begin{Ter}\label{initialterm}
        We say that $\cat C\in\CAlg(\Pr^L_{\mathrm{st}})$ is \emph{compactly generated} if there exists a small $\infty$-category $\CC_0$ which admits finite colimits and an equivalence $\CC\simeq \Ind(\CC_0)$. In this case, we can in fact choose $\CC_0$ to be the $\infty$-category $\CC^\omega$ of compact objects of $\CC$. If $\CC_0$ can additionally be chosen to consists of dualizable objects, then we say that $\CC$ is \emph{compactly generated by dualizable objects}. In this case it follows that $\CC^\omega\subseteq \CC^{\dual}$\footnote{To see this, write a compact object $x$ as a filtered colimit of dualizables and use compactness to see that $x$ is a retract of one of these dualizable objects, hence itself dualizable.} . We also say that $\CC$ is \emph{rigidly-compactly generated} if it is compactly generated and $\CC^\omega=\CC^{\dual}$. 
\end{Ter}

\begin{Rem}
    Being rigidly-compactly generated is also equivalent to being compactly generated by dualizable objects and $\unit\in \CC^\omega$. The forward implication is clear and for the backward implication it is enough to show that any dualizable object $x$ is compact. The fact that $\unit$ is compact together with $\Map_{\CC}(x,-))\simeq \Map_{\CC}(\unit, \hom_{\CC}(x,-))$, let us reduce to proving that $\hom_{\CC}(x,)\simeq \mathbb{D}x \otimes -$ preserves filtered colimits, which is clear. 
\end{Rem}

The next definition is the special case of \Cref{def-torsion-etc} in case $\DD=\MM=\CC$ there, but we paraphrase it for convenience.

\begin{Def}
    Let $\cat C\in\CAlg(\Pr^L_{\mathrm{st}})$ be compactly generated by dualizable objects, and let $\K\subseteq \cat C$ be a set of compact objects of $\cat C$.     \begin{itemize}
        \item We say that $x\in\cat C$ is $\K$-\emph{torsion} if $x$ belongs to the localizing tensor ideal generated by $\K$ in $\cat C$. The full subcategory of $\K$-torsion object is denoted by $\cat C^{\K\text{-tors}}\subseteq \cat C$. 
        \item We say that $x\in \cat C$ is $\K$-\emph{local} if lies in the right orthogonal of $\cat C^{\K\text{-tors}}$, equivalently when $\hom_{\CC}(k, x)\simeq 0$ for all $k\in\K$. The full subcategory of $\K$-local objects is denoted by $\cat C^{\K\text{-loc}}\subseteq\CC$. 
        \item We say that $x$ is $\K$-\emph{complete} if it lies in the right orthogonal of $\cat C^{\K\text{-loc}}$. The full subcategory of $\K$-complete objects is denoted by $\cat C^{\K\text{-cpl}}\subseteq\CC$. 
    \end{itemize}
    We let $i_{\text{tors}}$, $i_{\text{loc}}$ and $i_{\text{cpl}}$ denote the canonical inclusion functors into $\cat C$. If $\K$ is clear from context, it will be omitted from the notation. For instance we will simply write $\CC^{\tors}, \; \CC^{\cpl}$ and $\CC^{\loc}$ for the category of torsion, complete, and local objects, respectively.
\end{Def}

\begin{Lem}\label{lem-properties-torsion-local}
     Let $\cat C\in\CAlg(\Pr^L_{\mathrm{st}})$ be compactly generated by dualizable objects and let $\K\subseteq \cat C$ be a set of compact objects of $\cat C$.  
     \begin{itemize}
     \item[(a)] The functor $i_{\text{tors}}$ has a right adjoint $\Gamma=\Gamma_{\K}$, the functor $i_{\text{loc}}$ has a left adjoints $L=L_{\K}$ and a right adjoint $\Delta=\Delta_\K$, and the functor $i_{\text{cpl}}$ has a left adjoint $\Lambda=\Lambda_{\K}$. Furthermore, the units and counits of these adjunction participate in natural fibre sequences 
     \[
     \Gamma \to \mathrm{id}_{\CC} \to L \quad \mathrm{and} \quad \Delta \to \mathrm{id}_{\CC} \to \Lambda.
     \]
     Finally, there is a diagram of adjoints
   \[
    \qquad\qquad
    \begin{tikzcd}
        \CC^{\K \text{-}\loc} \arrow[r, shift right, "i_\loc"'] & \arrow[l, shift right, "L"']\CC \arrow[r, shift right, "\Gamma"'] & \CC^{\K\text{-}\tors}\arrow[l, shift right, "i_\tors"'] 
    \end{tikzcd}
    \quad \text{in} \;\; \Mod_{\CC}(\PrL)
    \]
     where the top composite is cofibre sequence, and the bottom composite is a fibre sequence.  There is also a cofibre sequence 
     \[
     \begin{tikzcd}
        \CC^{\K \text{-}\loc} \arrow[r, "i_\loc"] & \CC \arrow[r, "\Lambda"] & \CC^{\K\text{-}\cpl}
    \end{tikzcd} 
    \quad \text{in}\;\; \Mod_{\CC}(\PrL).
    \]
     \item[(b)] The functors $\Gamma\colon \cat C \to \cat C^{\K\text{-}\tors}$ and $L\colon \cat C \to \cat C^{\K\text{-}\loc}$ are smashing (i.e., $\Gamma x \simeq \Gamma \unit \otimes x$ and $Lx \simeq \L \unit \otimes x$ for all $x\in \cat C$).
     \item[(c)] The functors $\Lambda i_{\tors}\colon \CC^{\K\text{-}\tors} \to \CC^{\K\text{-}\cpl}$ and $\Gamma i_{\cpl}\colon \CC^{\K\text{-}\cpl}\to \CC^{\K\text{-}\tors}$ are mutually inverse equivalences. Moreover there are natural equivalences of functors $\Lambda \Gamma \simeq \Lambda$ and $ \Gamma \simeq \Gamma \Lambda$.
     \end{itemize}
\end{Lem}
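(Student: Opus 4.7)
The plan is to deduce the entire lemma from the more general \Cref{lem:setuptors} and \Cref{lem:cpltors} specialized to the self-module case $\DD = \MM = \CC$. The only preliminary step is to verify that $\K$ consists of $\CC$-atomic objects, which is the hypothesis required by the general framework. Since $\CC$ is compactly generated by dualizable objects, the footnote to \Cref{initialterm} gives $\CC^\omega \subseteq \CC^{\dual}$, and \Cref{lem:atomics}(b) identifies $\CC^{\dual} = \CC^{\CC\text{-}\mathrm{at}}$; hence any set of compact objects of $\CC$ is automatically atomic.

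With this in hand, part (a) follows by quoting \Cref{lem:setuptors}: the existence of the three adjoints is parts (a)--(c) there, the two fibre sequences $\Gamma \to \id \to L$ and $\Delta \to \id \to \Lambda$ are part (d) (with the fully faithful inclusions $i_\tors$, $i_\loc$, $i_\cpl$ suppressed in the abbreviated notation used in the present statement), and the two diagrams of adjoints in $\Mod_{\CC}(\PrL)$ are parts (e) and (f).

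For part (b), I would exploit that by part (a) the composite endofunctors $i_{\tors}\Gamma$ and $i_{\loc} L$ of $\CC$ are $\CC$-linear. Any $\CC$-linear endofunctor $F$ of $\CC$ satisfies $F(x) \simeq F(x \otimes \unit) \simeq x \otimes F(\unit)$ by module-structure coherence, which is exactly the smashing condition. Applying this to $F = i_\tors \Gamma$ and $F = i_\loc L$ yields the two formulas.

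For part (c), the mutually inverse equivalences $\Lambda i_{\tors}$ and $\Gamma i_{\cpl}$ are a direct application of \Cref{lem:cpltors} with $\MM = \CC$. The natural equivalences $\Lambda \Gamma \simeq \Lambda$ and $\Gamma \simeq \Gamma \Lambda$ (to be read as $\Lambda i_\tors \Gamma \simeq \Lambda$ and $\Gamma \simeq \Gamma i_\cpl \Lambda$) I would obtain by applying $\Lambda$ and $\Gamma$ to the two fibre sequences from part (a): applying $\Lambda$ to $i_\tors \Gamma \to \id \to i_\loc L$ reduces the claim to $\Lambda i_{\loc} \simeq 0$, which holds by the very definition $\ker \Lambda = \CC^{\K\text{-}\loc}$; symmetrically, applying $\Gamma$ to $i_\loc \Delta \to \id \to i_\cpl \Lambda$ reduces to $\Gamma i_{\loc} \simeq 0$, which holds by the definition of $\K$-local objects. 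I do not anticipate any serious obstacle: almost all of the work has been done in the previous two sections, and the present lemma is essentially a bookkeeping exercise, with the smashing claim in part (b) being the only line requiring a short separate argument beyond pointing to the earlier results.
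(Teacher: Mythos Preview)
Your proposal is correct and follows essentially the same route as the paper: reduce everything to the general results \Cref{lem:setuptors} and \Cref{lem:cpltors} after noting that compact objects are $\CC$-atomic via \Cref{lem:atomics}(b). The only cosmetic difference is in part (b): the paper cites \Cref{lem:functoriality-of-loc}(d) for the smashing property of $L$ and then deduces the one for $\Gamma$ from the fibre sequence, whereas you argue both directly from $\CC$-linearity of the composites $i_\loc L$ and $i_\tors\Gamma$; but the argument inside \Cref{lem:functoriality-of-loc}(d) is precisely the $\CC$-linearity trick you wrote, so there is no substantive divergence. Your explicit justification of the ``Moreover'' clause in (c) via the two fibre sequences is a welcome addition, as the paper's proof simply points to \Cref{lem:cpltors} without spelling this out.
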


\begin{proof}
\leavevmode
\begin{itemize}
\item[(a)] All claims are special cases of  \Cref{lem:setuptors}.
\item[(b)] The fact that $L$ is smashing follows from \Cref{lem:functoriality-of-loc}(d) and the statement for $\Gamma$ follows from this and the fibre sequence involving $\Gamma$ and $L$.
\item[(c)] This is a special case of \Cref{lem:cpltors}.
\end{itemize}
\end{proof}

We next address symmetric monoidal structures in our localizations.

\begin{Rem}\label{rem-local}
   We note that $\CC^{\K\text{-}\loc}$ admits a symmetric monoidal structure such that $L \colon \CC \to \CC^{\K\text{-}\loc}$ lifts to a symmetric monoidal functor, see \cite[Proposition 4.8.2.7]{HA}. In fact, \cite[Proposition 4.8.2.10]{HA} shows that the forgetful functor induces a symmetric monoidal equivalence $\Mod_{L(\unit)}(\cat C)\simeq \CC^{\K\text{-}\loc}$. It follows that canonically $\CC^{\K\text{-}\loc}\in \CAlg(\Pr^L_{\mathrm{st}})$, and that $\CC^{\K\text{-}\loc}$ is compactly generated by dualizable objects. It is rigidly-compactly generated if $\CC$ is so.
\end{Rem}

\begin{Rem}\label{cpl-stable-hom-theory}
    There is an essentially unique symmetric monoidal structure on 
    $\cat C^{\K\text{-}\cpl}$ such that the completion functor $\Lambda \colon \cat C \to\CC^{\K\text{-}\cpl}$ lifts to a symmetric monoidal functor.
    To see this, using  \cite[Proposition 2.2.1.9]{HA}, we have to check that if a map $\alpha$ in $\cat C$ is such that $\Lambda(\alpha)$ is an equivalence, then also $\Lambda(\alpha\otimes\id_X)$ is an equivalence for every $X\in\cat C$. Since the fiber $\mathrm{fib}(\alpha)$ of $\alpha$ is contained in $\cat C^{\K\text{-}\loc}$ and $\mathrm{fib}(\alpha\otimes\id_X)\simeq \mathrm{fib}(\alpha)\otimes X$, this follows from $\cat C^{\K\text{-}\loc}\subseteq\cat C$ being an ideal, which is clear since $\cat C^{\K\text{-}\loc}=L(\cat C)$, and $L$ is a smashing localization. In particular, we canonically have $\cat C^{\K\text{-}\cpl}\in \CAlg(\Pr^L_{\mathrm{st}})$.
\end{Rem}

We also record how the subcategories of local, torsion and complete objects behave under base-change. 

\begin{Prop}\label{prop-base-change-torsion-local}
    Let $\CC\in \CAlg(\PrL)$ be compactly generated by dualizable objects and let $\K \subseteq \CC^\omega$ a set of compact objects. Consider a map $f \colon \CC \to \DD$ in $\CAlg(\PrL)$ whose right adjoint preserves colimits. Then the base-change along $f$ of the diagram of $\CC$-modules
     \begin{equation}\label{cofibre-fibre}
    \begin{tikzcd}
        \CC^{\K \text{-}\loc} \arrow[r, shift right, "i_\loc"'] & \arrow[l, shift right, "L_{\K}"']\CC \arrow[r, shift right, "\Gamma_{\K}"'] & \CC^{\K\text{-}\tors}\arrow[l, shift right, "i_\tors"'] 
    \end{tikzcd}
    \end{equation}
    identifies with 
    \[
    \begin{tikzcd}
        \DD^{f(\K) \text{-}\loc} \arrow[r, shift right, "i_\loc"'] & \arrow[l, shift right, "L_{f(\K)}"']\DD \arrow[r, shift right, "\Gamma_{f(\K)}"'] & \DD^{f(\K)\text{-}\tors}.\arrow[l, shift right, "i_\tors"'] 
   \end{tikzcd}
    \]
    Furthermore, the functor $\DD \otimes_{\CC}\Lambda_{\K}\colon \DD \to \DD \otimes_{\CC}\CC^{\K\text{-}\cpl}$ canonically identifies with $\Lambda_{f(\K)}\colon \DD \to \DD^{f(\K)\text{-}\cpl}$.
\end{Prop}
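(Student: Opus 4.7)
The plan is to reduce everything to the cofiber sequences appearing in \Cref{lem:setuptors}(e)--(f), base-changed along $f$, combined with the module description $\CC^{\K\text{-}\loc}\simeq\Mod_{L_\K(\unit_\CC)}(\CC)$ from \Cref{rem-local}.

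First I would verify that $f$ is an internal left adjoint. The right adjoint $f^R$ preserves colimits by hypothesis, and the projection formula holds automatically for $d\in\CC$ dualizable by \Cref{megalemma-proj-formula}(b); since both sides of the projection map preserve colimits in $d$ and $\CC$ is generated under colimits by its dualizables, it extends to every $d\in\CC$. This places us in the setup of \Cref{lem:functoriality-of-loc}(d), yielding a symmetric monoidal functor $f^\loc\colon\CC^{\K\text{-}\loc}\to\DD^{f(\K)\text{-}\loc}$ together with the equivalences $fi_\loc\simeq i_\loc f^\loc$ and $L_\K f^R\simeq(f^\loc)^R L_{f(\K)}$, and the fact that $L_\K$ and $L_{f(\K)}$ are smashing.

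The key step is to show that the induced base-change map $\DD\otimes_\CC\CC^{\K\text{-}\loc}\to\DD^{f(\K)\text{-}\loc}$ in $\Mod_\DD(\PrL)$ is an equivalence. Using \Cref{rem-local}, both sides are module categories over commutative algebras in $\DD$, namely $f(L_\K(\unit_\CC))$ and $L_{f(\K)}(\unit_\DD)$, so it suffices to identify these two algebras. Applying the symmetric monoidal $f$ to the smashing localization sequence $\Gamma_\K(\unit_\CC)\to\unit_\CC\to L_\K(\unit_\CC)$, the left term lies in $\DD^{f(\K)\text{-}\tors}$ by \Cref{lem:functoriality-of-loc}(a), and the right term lies in $\DD^{f(\K)\text{-}\loc}$ by \Cref{lem:functoriality-of-loc}(c); by uniqueness of such a cofiber sequence, the right term is exactly $L_{f(\K)}(\unit_\DD)$.

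With the local identification in hand, the remaining statements are formal. The base-change $\DD\otimes_\CC(-)$ preserves cofibers in $\Mod_\CC(\PrL)$ and sends $\CC$ to $\DD$. Applying it to the cofiber sequences $\CC^{\K\text{-}\tors}\to\CC\to\CC^{\K\text{-}\loc}$ and $\CC^{\K\text{-}\loc}\to\CC\to\CC^{\K\text{-}\cpl}$ from \Cref{lem:setuptors}(e)--(f), and comparing with the analogous $f(\K)$-sequences in $\DD$, two-out-of-three for equivalences in a cofiber sequence forces the identifications for the torsion and complete parts. In particular, the map $\DD\otimes_\CC\Lambda_\K$ is identified with $\Lambda_{f(\K)}$ as the canonical projection onto the cofiber. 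The main technical obstacle is ensuring that the base-change identification $\DD\otimes_\CC\Mod_{L_\K(\unit_\CC)}(\CC)\simeq\Mod_{f(L_\K(\unit_\CC))}(\DD)$ is canonical and compatible with the symmetric monoidal structures, which is handled by \cite[Theorem 4.8.4.6]{HA}, and tracking the coherence of the natural maps being compared.
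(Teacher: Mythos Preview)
Your proposal is correct and follows essentially the same route as the paper: identify the local piece via $\CC^{\K\text{-}\loc}\simeq\Mod_{L_\K\unit}(\CC)$ and \cite[Theorem~4.8.4.6]{HA}, match the idempotent algebras $f(L_\K\unit)\simeq L_{f(\K)}\unit$, and then deduce the torsion and complete identifications from the cofiber sequences in \Cref{lem:setuptors}. The only cosmetic difference is that the paper obtains the algebra identification by directly citing the natural equivalence $fL_\K\simeq L_{f(\K)}f$ from \Cref{lem:functoriality-of-loc}(d), whereas you re-argue it by pushing forward the cofiber sequence $\Gamma_\K\unit\to\unit\to L_\K\unit$ and invoking uniqueness; both amount to the same thing.
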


\begin{proof}
    Firstly, we claim that $\DD \otimes_{\CC} \CC^{\K\text{-}\loc}\simeq \DD^{f(\K)\text{-}\loc}$.  This follows from the following sequence of equivalences:
    \begin{align*}
    \DD \otimes_{\CC} \CC^{\K\text{-}\loc} & \simeq \DD \otimes_{\CC} \Mod_{L_{\K}\unit}(\CC) \\
    & \simeq \Mod_{fL_{\K}\unit}(\DD)\\
    & \simeq \Mod_{L_{f(\K)}\unit}(\DD)\\
    & \simeq \DD^{f(\K)\text{-}\loc}
    \end{align*}
    where we used the description of local objects given in \Cref{rem-local}, the fact that module categories behave well with respect to base-change \cite[Theorem 4.8.4.6]{HA}, and the equivalence $f L_{\K}\simeq L_{f(\K)} f$ which holds by part (d) of \Cref{lem:functoriality-of-loc}, which is applicable by \Cref{lem:check-internal-adjoint}.

    Next, recall from \Cref{lem-properties-torsion-local} that we have a fibre sequence of $\CC$-linear functors $L_{\K}\to \mathrm{id}_{\CC}\to \Gamma_{\K}$ which base-changes to a fibre sequence of $\DD$-linear functors $\DD\otimes_{\CC} L_{\K}\to \mathrm{id}_{\DD}\to \DD \otimes_{\CC} \Gamma_{\K}$. Since localizations are uniquely determined by its subcategory of local objects, the previous paragraph implies that $\DD\otimes_{\CC}L_{\K}\simeq L_{f(\K)}$. From the fibre sequence of $\DD$-linear functors we also get $\DD \otimes_{\CC}\Gamma_{\K}\simeq \Gamma_{f(\K)}$. As a colocalization is uniquely determined by its subcategory of colocal objects, we also obtain $\DD \otimes_{\CC}\CC^{\K\text{-}\tors}\simeq \DD^{f(\K)\text{-}\tors}$ concluding the first part of the proposition. 
    
    For the final claim we consider the cofibre sequence of $\CC$-modules 
    \[
    \begin{tikzcd}
        \CC^{\K \text{-}\loc} \arrow[r, "i_\loc"] & \CC \arrow[r, "\Lambda_{\K}"] & \CC^{\K\text{-}\cpl}
    \end{tikzcd} 
    \]
    from \Cref{lem-properties-torsion-local}. Base-changing to $\DD$ we obtain a cofibre sequence of $\DD$-modules
    \[
    \DD \otimes_{\CC} \CC^{\K\text{-}\loc} \xrightarrow{\DD \otimes_{\CC}i_{\loc}} \DD \xrightarrow{\DD \otimes_{\CC} \Lambda_{\K}}\DD \otimes_{\CC}\CC^{\K\text{-}\cpl}
    \]
    which identifies $\DD \otimes_{\CC}\Lambda_{\K} \colon  \DD \to \DD \otimes_{\CC}\CC^{\K\text{-}\cpl}$ as a Verdier quotient of $\DD$ by $ \DD \otimes_{\CC} \CC^{\K\text{-}\loc} \simeq \DD^{f(\K)\text{-}\loc}$. Again from \Cref{lem-properties-torsion-local}, the same is true for $\Lambda_{f(\K)}\colon \DD \to \DD^{f(\K)\text{-}\cpl}$. So the final claim follows by uniqueness of Verdier quotients.
\end{proof}

We collect some technical observations into the next lemma.

\begin{Lem}\label{megalemma}
     Let $\CC\in \CAlg(\Pr^L_{\mathrm{st}})$ be compactly generated by dualizable objects. Fix a set of compact objects $\K\subseteq \CC^\omega$ defining corresponding categories of local, complete and torsion objects. Then:
     \begin{itemize}
     \item[(a)] $\cat C^\omega \cap \CC^{\tors}=(\CC^{\tors})^\omega$.
     \item[(b)] $\CC^\omega \cap \CC^{\tors}\subseteq\cat C^{\dual} \cap \CC^{\tors} \subseteq (\CC^{\cpl})^{\dual}$. 
     \item[(c)] $\CC^\omega\cap \CC^{\tors}=(\CC^{\cpl})^{\omega}$.
     \item[(d)] For all $x\in \CC^{\dual}$ and $y\in \CC^{\cpl}$, then $x\otimes y \in \CC^{\cpl}$ (where the tensor product is calculated in $\CC$).
     \item[(e)]  Let $x\in\cat C^\omega \cap \CC^{\tors}, y\in(\CC^{\cpl})^{\dual}$. In this case, $x\otimes y\in \CC^\omega\cap \CC^{\tors}$. 
     \end{itemize}
\end{Lem}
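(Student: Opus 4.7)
My plan is to treat the five parts in order, with (b), (c), and (e) depending crucially on the Morita-style equivalence $\Lambda i_{\tors}\colon \CC^{\tors}\simeq \CC^{\cpl}$ from \Cref{lem:cpltors} and on $\Lambda\colon \CC\to\CC^{\cpl}$ being symmetric monoidal (\Cref{cpl-stable-hom-theory}), while parts (d) and (e) rely on $L$ being smashing (\Cref{lem-properties-torsion-local}(b)).

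For (a), I would argue that $i_{\tors}$ both preserves and detects compactness. The right adjoint $\Gamma$ preserves colimits by \Cref{lem:setuptors}(a), hence $i_{\tors}$ sends compacts to compacts. Conversely, $i_{\tors}$ is fully faithful and preserves filtered colimits, so for $x\in\CC^{\tors}$ the mapping space $\Map_{\CC^{\tors}}(x,-)\simeq \Map_{\CC}(i_{\tors}x,i_{\tors}-)$ commutes with filtered colimits exactly when $i_{\tors}x$ is compact in $\CC$. For (b), the first inclusion is immediate from $\CC^{\omega}\subseteq\CC^{\dual}$ (a consequence of compact generation by dualizables, \Cref{initialterm}); for the second, I would apply the symmetric monoidal functor $\Lambda$, which preserves dualizable objects, and then identify $\CC^{\tors}\simeq\CC^{\cpl}$ via the equivalence of \Cref{lem:cpltors}. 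Part (c) then follows by combining (a) with the fact that an equivalence of $\infty$-categories such as $\Lambda i_{\tors}$ preserves and reflects compact objects.

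For (d), I would use that $L$ being smashing forces $\CC^{\loc}$ to be closed under tensoring with arbitrary objects of $\CC$: for any $z\in\CC^{\loc}$ and $w\in\CC$, we have $w\otimes z\simeq L\unit\otimes(w\otimes z)\simeq L(w\otimes z)$, which is local. Then for $x\in\CC^{\dual}$ and $y\in\CC^{\cpl}$ and any local $z$, the duality adjunction yields $\Map(z,x\otimes y)\simeq\Map(\mathbb{D}x\otimes z,y)=0$ by completeness of $y$, so $x\otimes y$ is right orthogonal to the local subcategory and therefore complete.

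For (e), note first that $x\otimes y\in\CC^{\tors}$ since torsion objects form a $\CC$-submodule. By (a) it suffices to verify compactness in $\CC^{\tors}$. Transporting along the equivalence $\Lambda i_{\tors}$, this reduces to showing that $\Lambda(x\otimes y)\simeq \Lambda x\otimes_{\CC^{\cpl}}y$ is compact in $\CC^{\cpl}$, where I have used that $\Lambda$ is symmetric monoidal and that $\Lambda y\simeq y$ since $y$ is already complete. By (c), $\Lambda x\in (\CC^{\cpl})^{\omega}$, and since $y\in (\CC^{\cpl})^{\dual}$, tensoring with $y$ in $\CC^{\cpl}$ preserves compactness, yielding the claim. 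The main bookkeeping obstacle throughout will be carefully distinguishing tensor products and dualizability in $\CC$ from those in $\CC^{\cpl}$, and correctly invoking the equivalence of \Cref{lem:cpltors} when moving between the torsion and complete descriptions.
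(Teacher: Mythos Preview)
Your arguments for (a), (d), and (e) are correct. For (a) and (d) they match the paper's; your (e) is a clean variant that works entirely through the equivalence $\Lambda i_{\tors}$ and the symmetric monoidality of $\Lambda$, whereas the paper first observes that $x\otimes y$ is complete via (d) and then checks compactness in $\CC^{\cpl}$ by hand before transferring back.

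There is, however, a genuine gap in your treatment of (b) and (c). Both statements are asserted as literal inclusions of full subcategories of $\CC$: in particular (b) contains the claim that every torsion dualizable object $t$ already lies in $\CC^{\cpl}$, and (c) needs this so that the two sides consist of the \emph{same} objects of $\CC$ rather than merely corresponding under $\Lambda$. Your argument---applying the symmetric monoidal functor $\Lambda$ and invoking the equivalence $\Lambda i_{\tors}\colon\CC^{\tors}\simeq\CC^{\cpl}$---only shows that $\Lambda$ carries $\CC^{\dual}\cap\CC^{\tors}$ into $(\CC^{\cpl})^{\dual}$ and restricts to a bijection $(\CC^{\tors})^{\omega}\simeq(\CC^{\cpl})^{\omega}$; it does not establish $t\simeq\Lambda t$. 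This is not mere bookkeeping: the literal containment is what is used later (e.g.\ in \Cref{lem-torsunambig}, where an object of $\CC^{\omega}\cap\CC^{\tors}$ is simultaneously treated as an element of $(\CC^{\cpl})^{\dual}$ inside $\CC$). The paper supplies this step directly: for $t$ torsion dualizable and $l$ local one has $\hom_{\CC}(l,t)\simeq\mathbb{D}l\otimes t$, and this object is both torsion (as $t$ is and torsion objects form an ideal) and local (since $\Map_{\CC}(s,\mathbb{D}l\otimes t)\simeq\Map_{\CC}(s\otimes l,t)=0$ for every torsion $s$), hence zero; thus $t$ is complete. You correctly anticipated exactly this kind of difficulty in your closing sentence, but it is a step that must actually be carried out rather than flagged.
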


\begin{proof}
\leavevmode
\begin{itemize}
\item[(a)] The inclusion functor $\CC^{\tors} \to \CC$ preserves all colimits as it is a left adjoint. In particular, if $t$ is torsion and $\Map_{\CC}(t,-)$ commutes with all filtered colimits in $\CC$, then it also commutes with all filtered colimits in $\CC^{\tors}$, that is we have $\cat C^\omega \cap \CC^{\tors}\subseteq (\CC^{\tors})^\omega$.
 Let conversely $t\in (\CC^{\tors})^\omega$ be compact as a torsion object, and consider a filtered diagram $I\ni i\mapsto y_i\in\CC$. We then find that 
   \[
   \Map_{\CC}(t, \colim y_i) \simeq \Map_{\CC^{\tors}}(t, \Gamma (\colim y_i))\simeq \Map_{\CC^{\tors}}(t, \colim \Gamma (y_i))
   \]
   using that $\Gamma$ preserves colimits since $i_{\tors}$ is an internal left adjoint. This equivalence fits into a commutative diagram 
   \[
   \begin{tikzcd}
       \Map_{\CC}(t, \colim y_i) \arrow[r,"\sim"] & \Map_{\CC^{\tors}}(t, \colim \Gamma( y_i)) \\
      \colim \Map_{\CC}(t,y_i)\arrow[u] \arrow[r, "\sim"] & \colim \Map_{\CC^{\tors}}(t, \Gamma (y_i))\arrow[u,"\sim"]
   \end{tikzcd}
   \]
   using compactness of $t$ for the right vertical equivalence, and a simple adjunction argument for the bottom equivalence. By the 2-out-of-3 property, we conclude that the left vertical arrow is an equivalence, hence showing $\cat C^\omega \cap \CC^{\tors}\supseteq (\CC^{\tors})^\omega$. This proves part (a). 
   \item[(b)]

    The first containment of part (b) is clear as we always have $\CC^\omega \subseteq \CC^{\dual}$,
    see \Cref{initialterm}. For the second containment let $t\in \CC$ be torsion and dualizable. 
    We first show that $t$ is complete, by checking that $\hom_{\CC}(l,t)\simeq 0$ for all local objects $l$.
    Using that $t$ is dualizable, one checks that the canonical map $\mathbb D l\otimes t\to\hom_{\CC}(l,t)$ is an equivalence, so it suffices 
    to verify that $\mathbb D l \otimes t\simeq 0$. 
    This now follows from the observation that $\mathbb D l \otimes t$ is both torsion (as $t$ is torsion and the category of torsion objects forms an ideal) and local since for all torsion object $s$, we have
    \[
    \Map_{\CC}(s, \mathbb D l \otimes t)\simeq \Map_{\CC}(s \otimes l, t) \simeq 0,
    \]
    using that $s \otimes l\simeq 0$ is both local and torsion. We therefore showed that $t$ is complete which implies that the canonical map $t\to \Lambda t$ is an equivalence. Recall that the completion functor $\Lambda \colon \CC \to \CC^{\cpl}$ is symmetric monoidal so it preserves dualizable objects. It then follows that $t \simeq \Lambda t$ is dualizable in $\CC^{\cpl}$ proving (b). 
    \item[(c)] Recall from \Cref{lem-properties-torsion-local}(c) that the completion functor $\Lambda$ induces an equivalence $\CC^{\tors}\xrightarrow{\sim} \CC^{\cpl}$. This furthermore restricts to an equivalence  between compact objects $\Lambda\colon (\CC^{\tors})^\omega\simeq (\CC^{\cpl})^\omega$. Any $t\in (\CC^{\tors})^\omega=\CC^\omega \cap \CC^{\tors}$ is already complete by part (b) so $t \simeq \Lambda t \in (\CC^{\cpl})^\omega$. This gives the containment $\CC^\omega \cap \CC^\tors\subseteq (\CC^{\cpl})^\omega$. For the converse note again that any object $c\in (\CC^{\cpl})^{\omega}$ is necessarily of the form $c\simeq \Lambda t$ for some $t \in(\CC^{\tors})^{\omega}=\CC^\omega \cap \CC^{\tors}$. But $t$ is already complete by part (b) and so $c\simeq t \in(\CC^{\tors})^{\omega}=\CC^{\omega} \cap \CC^{\tors}$ using part (a). This gives the other containment  $(\CC^{\cpl})^{\omega} \subseteq \CC^\omega \cap \CC^\tors$. 
    \item[(d)] We need to verify that $\Map_{\CC}(l, x \otimes y)=0$ for all local objects $l$. For this we compute
    \[
    \Map_{\CC}(l, x \otimes y)\simeq \Map_{\CC}(l \otimes \mathbb{D}x, y)\simeq \Map_{\CC}(\mathbb{D}x, \hom_{\CC}(l,y)),
    \]
 using that $x$ is dualizable. We conclude by observing $\hom_{\CC}(l,y)=0$, since
 \[ \Map_{\CC}(-,\hom_{\CC}(l,y))\simeq \Map_{\CC}(-\otimes l,y)=0,\]
because $-\otimes l$ is local and $y$ is complete.
 \item[(e)] 
We first observe that by part (d), $x\otimes y$ is complete, and compact in the complete category since
 \[ \Map_{\CC^{\mathrm{cpl}}}(x\otimes y,-)\simeq\Map_{\CC^{\mathrm{cpl}}}(x,\hat{\mathbb D} y\ \hat{\otimes}\, -),\]
using that $y$ is dualizable as a complete object, by assumption. Next we observe that, along with $x$, also the uncompleted tensor product $x\otimes y$ is torsion, so we can compute
\begin{align*}
\Map_{\CC}(x\otimes y, -) & \simeq \Map_{\CC^{\mathrm{tors}}}(x\otimes y, \Gamma (-))\\
                         &\simeq \Map_{\CC^{\mathrm{cpl}}}(\Lambda(x\otimes y),\Lambda\Gamma(-)),
\end{align*}
using \Cref{lem:cpltors} in the second step. Since both $\Gamma$ and $\Lambda$ preserve (filtered) colimits and $\Lambda(x\otimes y)=x\otimes y$ is compact, this shows that $\Map_{\CC}(x\otimes y, -)$ commutes with filtered colimits in $\CC$, i.e. we showed $x\otimes y\in\CC^\omega$. Since we observed previously that $x\otimes y$ is torsion, the proof is complete.
    \end{itemize}
\end{proof}

\section{The fracture square}\label{section:fracture}

In this section we will explain how to reconstruct a rigidly-compactly generated symmetric monoidal $\infty$-category $\CC$ from local and complete data while retaining the symmetric monoidal structure. More precisely, we fix a set of compact objects $\K\subseteq \CC^\omega$, and consider the $\infty$-categories $\CC^{\K\text{-loc}}$ and $\CC^{\K\text{-cpl}}$ of local and complete objects respectively. We then introduce in \Cref{def-C-hat} a new $\infty$-category $\widehat{\CC}_{\K}$  which is built from complete data, and construct a $\CC$-linear internal left adjoint $F \colon \CC \to \widehat{\CC}_{\K}$. Finally in \Cref{thm-pullback} we construct a square 
\[
\begin{tikzcd}
    \CC \arrow[d,"F"'] \arrow[r,"L"] & \CC^{\K\text{-}\loc} \arrow[d,"F^{\loc}"] \\
    \widehat{\CC}_{\K} \arrow[r,"\hat{L}"] & (\widehat{\CC}_{\K})^{F(\K)\text{-}\loc}
\end{tikzcd}
\]
and prove it is a pullback in $\CAlg(\PrL)$. 

We start off with the following definition which is the main novelty in our approach to glueing.

\begin{Def}\label{def-C-hat}
    Let $\CC\in \CAlg(\Pr^L_{\mathrm{st}})$ be rigidly-compactly generated and $\K\subseteq \CC^\omega$ a set of compact objects. We set 
    \[
    \widehat{\CC}_{\K}:= \Ind ((\CC^{\K\text{-}\cpl})^{\dual}).
    \]
\end{Def}

Our newly defined $\widehat{\CC}_{\K}$ supports the following structure.

\begin{Rem}
    Recall from \Cref{cpl-stable-hom-theory}  that $\CC^{\K\text{-}\cpl}\in \CAlg(\Pr^L_{\mathrm{st}})$. It then follows from ~\cite[Lemma 2.5]{NaumannPol} that its subcategory of dualizable objects admits the structure of a $2$-ring, and so the discussion after the cited results shows that $\widehat{\CC}_{\K} \in \CAlg(\Pr^L_{\mathrm{st}})$. By construction $\widehat{\CC}_{\K}$ is compactly generated by $(\CC^{\K\text{-}\cpl})^{\dual}$ and $(\widehat{\CC}_{\K})^\omega=(\CC^{\K\text{-}\cpl})^{\dual}$. In fact, $\widehat{\CC}_{\K}$ is rigidly-compactly generated as it is compactly generated by dualizable objects and the unit object is compact.
\end{Rem}

We can now start to set up our fracture square for reconstructing $\CC$.

\begin{Cons}
    The completion functor $\Lambda \colon \CC \to \CC^{\K\text{-}\cpl}$ is symmetric monoidal so it restricts to dualizable objects $\Lambda^{\mathrm{dual}}:\CC^{\mathrm{dual}}\to (\CC^{\K\text{-}\mathrm{cpl}})^{\mathrm{dual}}$, and hence induces a colimit-preserving symmetric monoidal functor $F:=\Ind(\Lambda^{\mathrm{dual}})\colon \CC \to   \widehat{\CC}_{\K}$, using that $\cat C\simeq\Ind(\cat C^{\dual})$, by rigidity.
\end{Cons}

\begin{Lem}\label{lem:F-is-internal}
The functor
\[ 
F\colon \CC\to\widehat{\CC}_{\K}
\]
is an internal left adjoint with respect to the $\CC$-linear structure on $\widehat{\CC}_{\K}$ given by the symmetric monoidal functor $F$.
\end{Lem}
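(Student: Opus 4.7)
The plan is to apply \Cref{lem:check-internal-adjoint}: since $\CC$ is rigidly-compactly generated, in particular it is generated under colimits by its dualizable objects, so to show that $F$ is an internal left adjoint it suffices to check that its right adjoint $F^R$ preserves colimits. Note that the required $\CC$-linearity is automatic since $F$ is symmetric monoidal and the $\CC$-module structure on $\widehat{\CC}_\K$ is the one given by $F$ itself.

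First I would verify that $F$ preserves compact objects. Indeed, by construction $(\widehat{\CC}_\K)^\omega = (\CC^{\K\text{-}\cpl})^{\dual}$, and for $x\in\CC^\omega=\CC^{\dual}$ we have $F(x)=\Lambda^{\dual}(x)\in(\CC^{\K\text{-}\cpl})^{\dual}$ because the symmetric monoidal functor $\Lambda$ preserves dualizability. By the standard adjunction argument (a left adjoint preserves compact objects if and only if its right adjoint preserves filtered colimits), it follows that $F^R$ preserves filtered colimits.

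It remains to show that $F^R$ preserves finite colimits. But $F^R$ is a right adjoint between stable $\infty$-categories, hence preserves all limits, in particular finite limits; in a stable $\infty$-category, a functor is exact (preserves finite colimits) if and only if it preserves finite limits. Combining these, $F^R$ preserves both filtered and finite colimits, hence all colimits, completing the proof via \Cref{lem:check-internal-adjoint}.

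I do not expect any serious obstacle: the argument is a two-line application of the general fact that, between compactly generated stable $\infty$-categories, a symmetric monoidal left adjoint which preserves compact objects automatically has a colimit-preserving right adjoint, combined with the reduction afforded by \Cref{lem:check-internal-adjoint} to avoid checking the projection formula directly.
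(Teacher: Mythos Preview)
Your proposal is correct and follows essentially the same approach as the paper: both reduce via \Cref{lem:check-internal-adjoint} to showing that $F^R$ preserves colimits, and both deduce this from the fact that $F$ preserves compact objects. The paper simply asserts the latter implication as ``well-known'', whereas you spell out the standard argument (filtered colimits from compactness preservation, finite colimits from exactness in the stable setting).
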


\begin{proof}
Since $\CC$ is generated by dualizables, by \Cref{lem:check-internal-adjoint} it suffices to see that the right adjoint of $F$ preserves colimits. This is well-known to follow from $F$ preserving compact objects, which is clear by construction.
\end{proof}

\begin{Rem}\label{rem-formulas-FandFR}
    We briefly discuss a more concrete description of the functor $F$ and of its right adjoint $F^R$. By definition of $F$, we have a commutative diagram 
    \begin{equation}\label{square-F}
    \begin{tikzcd}
        \CC \arrow[r,"F"] & \widehat{\CC}_{\K} \\
        \CC^\omega \arrow[u, hook] \arrow[r,"\Lambda"] & (\CC^{\K\text{-}\cpl})^{\dual}\arrow[u,"Y"', hook]
    \end{tikzcd}
    \end{equation}
    where the right vertical map is the Yoneda embedding. Thus if we write $x\in \CC$ as a filtered colimits of $x_\alpha\in \CC^\omega$, then 
    \begin{equation*}\label{eq-F-formula}
        F(x)=F(\colim_\alpha x_\alpha)=\colim_\alpha F(x_\alpha)=\colim_{\alpha}Y\Lambda x_\alpha.
    \end{equation*}
    For the right adjoint $F^R$, we claim there is a commutative diagram 
    \begin{equation}\label{square-FR}
    \begin{tikzcd}
        \widehat{\CC}_\K \arrow[r,"F^R"] & \CC\\
        (\CC^{\K\text{-}\cpl})^{\dual}\arrow[u,"Y", hook] \arrow[ur, "i_{\cpl}"'] &.
    \end{tikzcd}
    \end{equation}
    To verify this pick $z \in (\CC^{\K\text{-}\cpl})^{\dual}$ and $x\in \CC$, and write $x$ as a filtered colimits of $x_\alpha\in \CC^\omega$. Then 
    \begin{align*}
        \Map_{\CC}(x, F^R(Y(z))) & \simeq \Map_{\widehat{\CC}_\K}(Fx, Y(z)) \\
                              & \simeq \lim_\alpha \Map_{\widehat{\CC}_\K}(Y\Lambda x_\alpha, Y(z))\\
                              & \simeq \lim_\alpha \Map_{\CC^{\K \text{-}\cpl}}(\Lambda x_\alpha, z) \\
                              & \simeq \lim_\alpha \Map_{\CC}(x_\alpha, i_\cpl z)\\
                              & \simeq \Map_{\CC}(x, i_\cpl z).
    \end{align*}
    Hence if we write $z \in \widehat{\CC}_{\K}$ as a filtered colimit of $Y(z_\beta)$ for $z_\beta \in (\CC^{\K\text{-}\cpl})^{\dual}$, then 
    \[
    F^R(z)=F^R(\colim_\beta Y(z_\beta))=\colim_\beta i_\cpl( z_\beta).
    \]
\end{Rem}

\begin{Cor}\label{cor-propertiesofF}
We have a canonical commutative square of internal left adjoints
 \begin{equation}\label{supersquare}
     \begin{tikzcd}
         \CC \arrow[r, "L"] \arrow[d,"F"'] & \CC^{\K\text{-}\loc} \arrow[d,"F^{\mathrm{loc}}"]\\
         \widehat{\CC}_{\K} \arrow[r,"\hat{L}"] & (\widehat{\CC}_\K)^{F(\K)\text{-}\loc},
     \end{tikzcd}
    \end{equation}
in which $L$ and $\hat{L}$ are the respective localization functors with respect to $\K$ and $F(\K)$. 
Furthermore:
\begin{enumerate}
\item $F$ sends $\K$-local objects to $F(\K)$-local objects.
\item $L$ and $\hat{L}$ are smashing;
\item The Beck-Chevalley map $LF^R\xrightarrow{\sim}(F^{\mathrm{loc}})^R\hat{L}$ is an equivalence.
\item The Beck-Chevalley map $FL^R \xrightarrow{\sim} \hat{L}^R F^\loc$ is an equivalence.
\end{enumerate}
\end{Cor}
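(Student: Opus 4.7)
The plan is to derive all four claims by applying \Cref{lem:functoriality-of-loc} directly to $F \colon \CC \to \widehat{\CC}_{\K}$, where $\widehat{\CC}_{\K}$ is given the $\CC$-module structure induced by the symmetric monoidal map $F$. I would begin by checking the hypotheses of part (d) of that lemma. The rigidity of $\CC$ together with \Cref{lem:atomics}(b) gives $\CC^{\CC\text{-}\mathrm{at}} = \CC^{\dual} = \CC^\omega$, so the assumption $\K \subseteq \CC^{\CC\text{-}\mathrm{at}}$ is automatic. The map $F$ lies in $\CAlg(\PrL)$ by construction, and \Cref{lem:F-is-internal} asserts that it is an internal left adjoint, which is exactly the hypothesis that its right adjoint preserves colimits and satisfies the projection formula. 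Part (b) of \Cref{lem:functoriality-of-loc} then immediately produces the commutative square \eqref{supersquare} of internal left adjoints, with $\hat{L}$ being the $F(\K)$-localization of $\widehat{\CC}_{\K}$.

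Once part (d) applies, the remaining assertions follow almost verbatim from what it provides. The equivalent conditions in (c) of that lemma all hold, in particular condition (iii), which is precisely (1). The explicit ``moreover'' clause of (d) states that $L$ is smashing, giving half of (2); smashness of $\hat{L}$ is then an independent consequence of \Cref{lem-properties-torsion-local}(b) applied to the rigidly-compactly generated category $\widehat{\CC}_{\K}$. The Beck--Chevalley equivalence $L F^R \xrightarrow{\sim} (F^\loc)^R \hat{L}$ in the same clause is exactly (3), while the companion equivalence $F i_\loc \xrightarrow{\sim} \hat{i}_\loc F^\loc$, read through $L^R = i_\loc$ and $\hat{L}^R = \hat{i}_\loc$, is precisely (4).

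The one minor bookkeeping point worth flagging is that the $F(\K)$-localization produced by \Cref{lem:functoriality-of-loc} uses the $\CC$-module structure on $\widehat{\CC}_{\K}$, whereas the intended $\hat{L}$ in the corollary treats $F(\K)$ as a set of compact objects in the rigidly-compactly generated category $\widehat{\CC}_{\K}$. These two localizations agree because the localizing $\CC$-submodule of $\widehat{\CC}_{\K}$ generated by $F(\K)$ coincides with the localizing tensor ideal it generates, as $F$ is symmetric monoidal and the essential image of $F$ on dualizables generates $\widehat{\CC}_{\K}$ under colimits. I do not foresee any serious obstacle: all the categorical heavy lifting sits in \Cref{lem:functoriality-of-loc} and \Cref{lem:F-is-internal}, and the corollary is essentially their specialization.
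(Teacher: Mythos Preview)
Your proposal is correct and follows the same approach as the paper, which simply says that all four items follow from \Cref{lem:functoriality-of-loc}, applicable by \Cref{lem:F-is-internal}. Your write-up is more explicit about the bookkeeping (the identification $L^R=i_{\loc}$ for item (d), and the separate invocation of \Cref{lem-properties-torsion-local}(b) for the smashing of $\hat L$), but the substance is identical.
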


\begin{proof}
The four items follow from \Cref{lem:functoriality-of-loc} which is applicable by \Cref{lem:F-is-internal}. 
\end{proof}

\begin{Rem}\label{rem-BC}
    The commutativity of the square in \Cref{cor-propertiesofF} is witnessed by a natural transformation $\alpha \colon F^{\loc} L \simeq \hat{L}F$. Passing to the total mate we get a natural transformation $\overline{\alpha}\colon F^R\hat{L}^R\simeq L^R(F^\loc)^R$ which has an associated left Beck-Chevalley map $FL^R \to \hat{L}^R F^\loc$. We claim that this latter map is also an equivalence. Indeed the natural transformation $\overline{\alpha}$ can be obtained by first considering the horizontal right Beck-Chevalley transformation of $\alpha$, and the passing to the vertical right Beck-Chevalley transformation. Using that passage to the left and right Beck-Chevalley transformations are inverse constructions, we deduce that the vertical left Beck-Chevally of $\overline{\alpha}$ is homotopic to the horizontal right Beck-Chevalley transformation of $\alpha$, which is an equivalence by \Cref{cor-propertiesofF}(d).
\end{Rem}

Our main theorem, then, is the following.

\begin{Thm}\label{thm-pullback}
    Let $\CC\in \CAlg(\Pr^L_{\mathrm{st}})$ be rigidly-compactly generated. Then the square (\ref{supersquare}) is a pullback diagram in $\CAlg(\Pr^L_{\mathrm{st}})$.
\end{Thm}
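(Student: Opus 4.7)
The plan is as follows. Since the forgetful functor $\CAlg(\Pr^L_{\mathrm{st}}) \to \Pr^L_{\mathrm{st}}$ preserves and creates limits, it suffices to prove that the square \eqref{supersquare} is a pullback in $\Pr^L_{\mathrm{st}}$; equivalently, that the canonical comparison functor
\[
\Phi \colon \CC \longrightarrow P := \CC^{\K\text{-}\loc} \times_{(\widehat{\CC}_{\K})^{F(\K)\text{-}\loc}} \widehat{\CC}_{\K}
\]
is an equivalence. I would construct a candidate right adjoint $\Psi \colon P \to \CC$ by the explicit formula
\[
\Psi(l, \hat{c}, \alpha) := i_\loc(l) \times_{F^R i_{\hat{\loc}} \hat{L}(\hat{c})} F^R(\hat{c}),
\]
where the map from $i_\loc(l)$ uses the $F \dashv F^R$-unit together with the Beck-Chevalley equivalence $F i_\loc \simeq i_{\hat\loc} F^\loc$ from \Cref{cor-propertiesofF} and the iso $\alpha$, and the map from $F^R(\hat{c})$ is $F^R$ applied to the completion unit $\hat{c} \to i_{\hat\loc}\hat{L}(\hat{c})$.

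Exactness of $F$, $F^R$ and $L$ implies that all squares appearing in the naturality analysis preserve the cofibre sequences $\Gamma \to \id_\CC \to i_\loc L$ and $\hat\Gamma \to \id_{\widehat\CC_\K} \to i_{\hat\loc}\hat{L}$ for the two recollements. A careful fibre-sequence computation then reduces both the assertion that the unit $\id_\CC \to \Psi\Phi$ is an equivalence and the assertion that the counit $\Phi\Psi \to \id_P$ is an equivalence to the single statement that the restricted functor
\[
F|_{\CC^{\K\text{-}\tors}} \colon \CC^{\K\text{-}\tors} \longrightarrow (\widehat\CC_\K)^{F(\K)\text{-}\tors}
\]
is an equivalence of $\infty$-categories. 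Specifically, the unit equivalence reduces to the assertion that the unit $\Gamma(x) \to F^R F \Gamma(x)$ is an equivalence (i.e.\ $F|_{\CC^{\K\text{-}\tors}}$ is fully faithful), and the counit equivalence reduces to the assertion that the counit $F F^R \hat\Gamma(\hat{c}) \to \hat\Gamma(\hat{c})$ is an equivalence (i.e.\ $F|_{\CC^{\K\text{-}\tors}}$ is essentially surjective).

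To establish this equivalence of torsion subcategories, I would pass to compact objects. Both sides are compactly generated, and $F|_{\CC^{\K\text{-}\tors}}$ preserves compact objects, being an internal left adjoint between $\CC$-modules by \Cref{lem:functoriality-of-loc}(a) and \Cref{lem:check-internal-adjoint}. By \Cref{megalemma}(a)--(c), the compact objects of $\CC^{\K\text{-}\tors}$ coincide with $\CC^\omega \cap \CC^{\K\text{-}\tors} = (\CC^{\K\text{-}\cpl})^\omega$, and these are already complete. The compact objects of $(\widehat\CC_\K)^{F(\K)\text{-}\tors}$ form the thick tensor ideal of $(\widehat\CC_\K)^\omega = (\CC^{\K\text{-}\cpl})^{\dual}$ generated by $F(\K) = \Lambda(\K)$; by \Cref{megalemma}(e) this ideal is contained in $(\CC^{\K\text{-}\cpl})^\omega$, and the reverse containment follows by writing any compact $\K$-torsion object as an iterated cofibre/retract of generators $c \otimes k$ with $c \in \CC^\omega$ and $k \in \K$ and noting that such $c \otimes k$ is already complete and hence equal to $\Lambda(c) \otimes \Lambda(k)$. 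Under these identifications the functor $F|_{\CC^{\K\text{-}\tors}}$ on compacts becomes the identity on $(\CC^{\K\text{-}\cpl})^\omega$ (since completion is trivial on already-complete objects), and Ind-completing yields the desired equivalence.

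The main technical obstacle is the fibre-sequence reduction in the second paragraph: one must track the Beck-Chevalley equivalences carefully through the formula for $\Psi$ and correctly identify the induced maps on fibres as units or counits of the adjunction $F \dashv F^R$. The conceptual heart of the argument, however, is \Cref{megalemma}(e), which forces the thick tensor ideal inside $(\CC^{\K\text{-}\cpl})^{\dual}$ generated by $\Lambda(\K)$ to coincide with $(\CC^{\K\text{-}\cpl})^\omega$; this is precisely the reason $\widehat\CC_\K$ is the correct monoidal replacement for $\CC^{\K\text{-}\cpl}$.
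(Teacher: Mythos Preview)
Your proposal is correct and follows essentially the same route as the paper: both construct the right adjoint to the comparison functor via the explicit pullback formula (the paper's Remark~\ref{rem:explicit_adjoint}), handle the unit by splitting into the torsion and local pieces of the recollement, and reduce the counit to the statement that $FF^R t \to t$ is an equivalence for all $F(\K)$-torsion $t$. The only difference is packaging: the paper records the two needed facts separately as \Cref{lem-torsunambig}(b) and \Cref{lem:control-fiber}(b) and proves them by direct mapping-space computations, whereas you bundle them into the single assertion that $F|_{\CC^{\K\text{-}\tors}}$ is an equivalence and verify it on compact objects; both arguments hinge on \Cref{megalemma}(e) in exactly the same way.
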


For the proof, we first review some facts about pullbacks.

\begin{Rem}\label{rem:explicit_adjoint}
Assume we have a commutative square in $\CAlg(\Pr^L_{\mathrm{st}})$ of the following shape:
\[
\begin{tikzcd}
    \CC_0 \arrow[r,"F_{01}"] \arrow[d,"F_{02}"'] & \CC_1 \arrow[d,"F_{13}"] \\
    \CC_2 \arrow[r,"F_{23}"] & \CC_3.
\end{tikzcd}
\]
By definition, the functors $F_{ij}$ admit right adjoints $F_{ij}^R$. The universal property of the pullback determines a functor $G \colon \CC_0 \to \CC_1 \times_{\CC_3} \CC_2$, which sends an object $X\in \CC_0$ to the triple $(F_{01}(X), F_{02}(X), \eta \colon F_{12}F_{01}(X)\simeq F_{23}F_{02}(X))$ where the equivalence $\eta$ is the one witnessing the commutativity of the above square. The functor $G$ admits a right adjoint $G^R$ which sends a triple $(X_1, X_2, \eta\colon F_{13}X_1 \simeq F_{23}X_2)\in \CC_1 \times_{\CC_3}\CC_2$ to the pullback in $\CC_0$ of the following diagram 
\[
\begin{tikzcd}
                & F_{01}^RX_1 \arrow[d] \\
 F_{02}^R X_2\arrow[r] & F_{02}^R F_{23}^R F_{23}X_2\simeq F_{01}^R F_{13}^RF_{13}X_1
\end{tikzcd}
\]
where the two arrows are induced by the unit maps of the adjunctions $(F_{23}, F_{23}^R)$ and $(F_{13}, F_{13}^R)$, and the bottom right equivalence is given by $F_{01}^RF_{13}^R(\eta)$ and the equivalence $F_{02}^R F_{23}^R\simeq F_{01}^R F_{13}^R$ (which follows from the commutativity of the square by passing to right adjoints). This description of $G^R$ is a special case of~\cite[Theorem 5.5]{HY2017}. By the cited result, we can also describe the unit and counit maps of the adjunction $(G,G^R)$  explicitly as follows. 
For any $X\in \CC_0$, the unit map $\eta_X \colon X \to G^R GX$ fits into a commutative diagram 
    \[
    \begin{tikzcd}
        X \arrow[dr,dotted, "\eta_X"]\arrow[rrd, bend left, "\mathrm{unit}"] \arrow[ddr, bend right, "\mathrm{unit}"']& & \\
                               & G^RG X \arrow[d] \arrow[r] & F_{01}^R F_{01}X \arrow[d]\\
                               & F_{02}^R F_{02}X \arrow[r] & (F_{23}F_{02})^R(F_{23}F_{02})X \simeq (F_{13}F_{01})^R(F_{13}F_{01})X
    \end{tikzcd}
    \]
    where the small square is a pullback. For any object $Z=(X_1,X_2, \eta \colon F_{13}X_1 \simeq F_{23}X_2)\in  \CC_1 \times_{\CC_3} \CC_2$, 
    the counit map $\epsilon_Z \colon GG^RZ \to Z$ has components given by the compositions
\[ f:F_{01}(G^RZ)\xrightarrow{F_{01}(\pi_1)} F_{01}F_{01}^R X_1 \to X_1 \mbox{ and }\]
\[ g:F_{02}(G^RZ) \xrightarrow{F_{02}(\pi_2)} F_{02}F_{02}^RX_2 \to X_2, \]
with the second maps being the counits.
\end{Rem}

The next two lemmas serve to sufficiently control the difference between $\CC$ and
$\widehat{\CC}$.

\begin{Lem}\label{lem-torsunambig}
    Let $x\in \CC^\omega\cap \CC^{\K\text{-}\tors}\subseteq(\CC^{\K\text{-}\cpl})^{\mathrm{dual}}\subseteq \CC^{\K\text{-}\cpl}\subseteq\CC$ (see \Cref{megalemma}(b)),  
    and consider the adjunctions 
    \[ 
    \Lambda \colon \CC\rightleftarrows \CC^{\K \text{-}\cpl}: i_{\cpl}\,\,\,\mbox{ and }\,\,\, F\colon \CC\rightleftarrows \widehat{\CC}_{\K}: F^R,
    \]
    as well as the restricted Yoneda embedding
    \[
    Y\colon (\CC^{\K \text{-}\cpl})^{\mathrm{dual}}\hookrightarrow \widehat{\CC}_{\K}=\mathrm{Ind}((\CC^{\K \text{-}\cpl})^{\mathrm{dual}}).
    \]
    Then the following unit and counit maps are equivalences:
    \begin{enumerate}
        \item $x\xrightarrow{\sim} i_{\mathrm{cpl}}\Lambda(x)$ and $\Lambda i_{\mathrm{cpl}}(x)\xrightarrow{\sim} x$.
        \item $x\xrightarrow{\sim} F^RF(x)$ and $FF^RY(x)\xrightarrow{\sim} Y(x)$.
    \end{enumerate}
    \end{Lem}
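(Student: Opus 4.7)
The plan is to reduce everything to the observation, already established in \Cref{megalemma}(b), that a compact torsion object $x$ is automatically complete, and then to exploit the explicit formulas for $F$ and $F^R$ recorded in \Cref{rem-formulas-FandFR}. So the real content is small; the difficulty is only bookkeeping to identify abstract unit/counit maps with concrete ones.

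For part (a), I would argue as follows. Since $x \in \cat C^\omega \cap \cat C^{\K\text{-}\tors}$, \Cref{megalemma}(b) places $x$ in the essential image of $i_{\cpl}\colon\cat C^{\K\text{-}\cpl}\hookrightarrow\cat C$. Because $i_{\cpl}$ is fully faithful (as the right adjoint of the Bousfield localization $\Lambda$), the counit $\Lambda i_{\cpl}\to\id_{\cat C^{\K\text{-}\cpl}}$ is an equivalence, and the unit $\id_{\cat C}\to i_{\cpl}\Lambda$ restricted to $\cat C^{\K\text{-}\cpl}$ is likewise an equivalence. Applied to $x$ this gives both statements of (a).

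For part (b), I would use the two squares in \Cref{rem-formulas-FandFR}. Since $x$ is compact, the square (\ref{square-F}) gives $F(x)\simeq Y\Lambda(x)$, and by (a) this is $Y(x)$. Similarly the square (\ref{square-FR}) gives $F^R Y(x)\simeq i_{\cpl}(x)\simeq x$ since $x$ is already complete. This already identifies both the source and target of the unit and counit maps with $x$ and $Y(x)$ respectively, so only the fact that the resulting equivalences agree with the (co)unit maps must be checked. The cleanest way, I think, is to test the unit on mapping spaces: for any $y\in\cat C$, write $y=\colim_\alpha y_\alpha$ with $y_\alpha\in\cat C^{\dual}=\cat C^\omega$, and compute
\[ \Map_{\cat C}(y,F^R F(x))\simeq\Map_{\widehat{\cat C}_{\K}}(Fy,Fx)\simeq \lim_\alpha \Map_{\cat C^{\K\text{-}\cpl}}(\Lambda y_\alpha,\Lambda x)\simeq \lim_\alpha\Map_{\cat C}(y_\alpha,i_{\cpl}\Lambda x), \]
and then invoke (a) to replace $i_{\cpl}\Lambda x$ by $x$; unwinding the definitions shows the resulting identification is induced by the unit of $(F,F^R)$. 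The counit statement is handled dually (or directly, since $Y$ is fully faithful so it suffices to check on mapping spaces out of objects in $(\cat C^{\K\text{-}\cpl})^{\dual}$).

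The main obstacle is purely notational: keeping track of the identifications \emph{natural in $x$} so that the comparison maps really are the (co)units rather than merely arbitrary equivalences. Once one writes out the triangles coming from the restricted Yoneda embedding $Y\colon(\cat C^{\K\text{-}\cpl})^{\dual}\hookrightarrow\widehat{\cat C}_{\K}$ together with the factorizations $F\simeq \Ind(\Lambda^{\dual})$ and $F^R|_{(\cat C^{\K\text{-}\cpl})^{\dual}}\simeq i_{\cpl}$, this is straightforward.
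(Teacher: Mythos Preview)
Your proof is correct and follows essentially the same route as the paper's. The only cosmetic difference is that the paper opens part~(b) with the general observation that, for an adjunction $f\dashv g$, the unit $a\to gf(a)$ is an equivalence if and only if $\Map_A(a',a)\to\Map_B(f(a'),f(a))$ is an equivalence for all $a'$ (and dually for the counit); this is exactly what dissolves your ``main obstacle'' about identifying the abstract (co)unit with the concrete map, and from there both arguments reduce to the same computation using the squares (\ref{square-F}) and (\ref{square-FR}) and the completeness of $x$.
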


\begin{proof}
Claim (a) is trivial because $(\Lambda,i_{\mathrm{cpl}})$ is a localization for which $x$ is a local object by assumption.
To prove (b), we first observe that generally, if $f\colon A\rightleftarrows B:g$ is an adjunction, then the unit map $a\to gfa$ is an equivalence if and only if for all $a'\in A$, the map $\Map_A(a',a)\to \Map_B(f(a'),f(a))$ induced by $f$ is an equivalence.\\ 

For the unit map in (b), we thus need to show that the map 
    \[ 
    \Map_{\CC}(y,x) \to \Map_{\widehat{\CC}_{\K}}(F(y),F(x))
    \]
    is an equivalence for all $x\in\cat C^\omega\cap \cat C^{\K \text{-}\tors}$ and $y\in\cat C$. Since $\cat C$ is compactly generated and $F$ preserves colimits, we may assume that $y\in \CC^{\omega}$. In this case, the map under consideration identifies by (\ref{square-F}) with the map
    \[ 
    \Map_{\CC}(y,x) \to \Map_{\widehat{\CC}_{\K}}(Y\Lambda(y), Y\Lambda(x))\simeq\Map_{\CC^{\K \text{-}\mathrm{cpl},\mathrm{dual}}}(\Lambda(y),\Lambda(x)),
    \]
    which is an equivalence because we assumed that $x\simeq\Lambda(x)$ is complete.
    To treat the counit in part (b), we check the dual version of our initial observation, namely that for all $x\in\cat C^\omega\cap \cat C^{\K \text{-}\tors}$, and $y\in\widehat{\cat C}_\K$, the map induced by $F^R$
    \begin{equation}\label{eq:test-map}
    \Map_{\widehat{\CC}_{\K}}(Y(x),y) \to \Map_{\CC}(F^R(Y(x)), F^R(y))
    \end{equation}
    is an equivalence.
    Since $F^R$ preserves colimits by \Cref{lem:F-is-internal}, we may assume without loss of generality that $y=Y(z)$ for some $z\in(\CC^{\K \text{-}\cpl})^{\dual}$. Using the commutative diagram (\ref{square-FR}), the map \Cref{eq:test-map} identifies with the map
    \[ 
    \Map_{\CC^{\K \text{-}\cpl}}(x,z) \simeq \Map_{\widehat{\CC}_\K}(Y(x),Y(z))\to \Map_{\CC}(i_{\mathrm{cpl}}(x),i_{\mathrm{cpl}}(z))
    \]
    induced by $i_\cpl$, and therefore it is an equivalence.
\end{proof}

\begin{Lem}\label{lem:control-fiber}
\leavevmode 
\begin{enumerate}
    \item For every $x\in\CC^\omega\cap\CC^{\K \text{-}\mathrm{tors}}$ and $z\in\widehat{\CC}_{\K}$, the map induced by the counit map 
\[
    F(x)\otimes FF^R(z) \to  F(x)\otimes z
\] 
is an equivalence.
\item For all $t\in (\widehat{\CC}_{\K})^{F({\mathcal K})\text{-}\mathrm{tors}}$, the counit map $FF^R(t) \to t$ is an equivalence.
\end{enumerate}
\end{Lem}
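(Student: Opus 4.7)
The plan is to deduce part (b) directly from part (a), and to reduce part (a), via a colimit argument and the projection formula, to the equivalence already established in \Cref{lem-torsunambig}(b).

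For part (a), I would first observe that both functors $z \mapsto F(x) \otimes FF^R(z)$ and $z \mapsto F(x) \otimes z$, viewed as endofunctors of $\widehat{\CC}_\K$, preserve colimits: $F^R$ does so by \Cref{lem:F-is-internal}, $F$ is a left adjoint, and tensoring in the presentably symmetric monoidal category $\widehat{\CC}_\K$ is colimit-preserving in each variable. Since $\widehat{\CC}_\K$ is generated under colimits by objects of the form $Y(w)$ with $w \in (\CC^{\K\text{-}\cpl})^{\dual}$, it therefore suffices to check that $F(x) \otimes \epsilon_{Y(w)}$ is an equivalence for such $w$. Combining the symmetric monoidality of $F$ with the projection formula for the internal left adjoint $F$ (see \Cref{def-proj-formula} and \Cref{lem:F-is-internal}), I obtain natural equivalences
\[
F(x) \otimes FF^R(z) \;\simeq\; F\bigl(x \otimes F^R(z)\bigr) \;\simeq\; FF^R\bigl(F(x) \otimes z\bigr).
\]
By \Cref{def-proj-formula}, the projection map is by construction the mate of $F(x) \otimes \epsilon_z$ under $F \dashv F^R$, so under the displayed equivalence the map $F(x) \otimes \epsilon_z$ corresponds to the counit $\epsilon_{F(x) \otimes z}$. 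It then suffices to show that $\epsilon_{F(x) \otimes Y(w)}$ is an equivalence. Symmetric monoidality of the Yoneda embedding gives
\[
F(x) \otimes Y(w) \;\simeq\; Y(x) \otimes Y(w) \;\simeq\; Y(\Lambda(x \otimes w)),
\]
and \Cref{megalemma}(e),(b) imply that $x \otimes w$ lies in $\CC^\omega \cap \CC^{\K\text{-}\tors}$ and is already complete, so $\Lambda(x \otimes w) = x \otimes w$. The conclusion then follows by applying \Cref{lem-torsunambig}(b) to $x \otimes w$.

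For part (b), let $\cat T \subseteq \widehat{\CC}_\K$ denote the full subcategory of objects on which the counit of $F \dashv F^R$ is an equivalence. Since $FF^R$ and $\id$ both preserve colimits, $\cat T$ is closed under colimits. Part (a), applied with $x = k \in \K \subseteq \CC^\omega \cap \CC^{\K\text{-}\tors}$, shows that $F(k) \otimes z \in \cat T$ for every $k \in \K$ and $z \in \widehat{\CC}_\K$. Since these objects generate the localizing tensor ideal $(\widehat{\CC}_\K)^{F(\K)\text{-}\tors}$ under colimits, one obtains $(\widehat{\CC}_\K)^{F(\K)\text{-}\tors} \subseteq \cat T$, as desired. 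The main technical subtlety lies in the mate-identification step of part (a): although this is a formal consequence of the definition of the projection map, writing out the naturality of all equivalences involved will require some care.
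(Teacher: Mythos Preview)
Your argument is correct and uses the same ingredients as the paper: the colimit reduction to representables, \Cref{megalemma}(e) to keep $x\otimes w$ in $\CC^\omega\cap\CC^{\K\text{-}\tors}$, and the projection formula for the $\CC$-linear internal left adjoint $F$. The only difference is organizational: the paper proves (a) by a direct computation of both sides---writing $F(x)\simeq Y(x)$, $F^RY(y)\simeq y$, and identifying the map with $F(x\otimes y)\to Y(x\otimes y)$---and then invokes the projection formula only in (b) to reduce the counit at $F(x)\otimes Y(y)$ to $F(x)\otimes\epsilon_{Y(y)}$. You instead front-load the projection formula into (a), converting $F(x)\otimes\epsilon_z$ into $\epsilon_{F(x)\otimes z}$ and then appealing directly to \Cref{lem-torsunambig}(b); your (b) then becomes a pure generation argument. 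One small presentational point: in your part (b) you write that part (a) shows $F(k)\otimes z\in\cat T$, but the \emph{statement} of (a) only gives that $F(k)\otimes\epsilon_z$ is an equivalence; what you are really using is the mate identification you established inside the proof of (a). It would be cleaner either to state that identification as a separate observation or to phrase (b) as following from the \emph{proof} of (a).
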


\begin{proof}
\leavevmode
\begin{enumerate}
\item Since $F$ and $F^R$ preserve colimits, we can assume that $z=Y(y)$ for some $y\in(\CC^{\K\text{-}\mathrm{cpl}})^{\dual}$. We then consider each side separately. 
We have
\[ 
F(x)=Y(\Lambda(x))\simeq Y(x),
\]
using (\ref{square-F}) and the fact that $x$ is complete. So the right hand side identifies with 
\[ 
Y(x)\otimes Y(y)\simeq Y(x\otimes y).
\]
For the left hand side, we recall from (\ref{square-FR}) that $F^R(Y(y))\simeq i_{\mathrm{cpl}}(y)\simeq y$, so using that $F$ is symmetric monoidal, the left hand side becomes equivalent to $F(x\otimes y)$. Now, the map we want to see is an equivalence is of the form
\[ 
F(x\otimes y)\to Y(x\otimes y).
\]
Since by \Cref{megalemma}(e) we have $x\otimes y\in\CC^\omega\cap\CC^{\K \text{-}\mathrm{tors}}$, the source of the map simplifies to $Y\Lambda(x \otimes y)$, and so the required map is an equivalence by the completeness of $x\otimes y$.
\item Since both $F$ and $F^R$ preserve colimits, we can assume that
\[ 
t = F(x)\otimes Y(y)
\]
for some $x\in\CC^\omega\cap\,\CC^{{\mathcal K}\text{-}\mathrm{tors}}$ and some $y\in(\CC^{\K \text{-}\mathrm{cpl}})^{\dual}$. We want to see that in this case, the counit map
\[ 
FF^R(F(x)\otimes Y(y))\to F(x)\otimes Y(y) 
\]
is an equivalence. Using that $F^R$ and $F$ are $\CC$-linear, this map is equivalent to the tensor product of $F(x)$ with the counit map at $Y(y)$, and hence an equivalence by part (a).
\end{enumerate}
\end{proof}

\begin{proof}[Proof of \Cref{thm-pullback}]
    Recall that we use the superscript $R$ to denote the right adjoint to a functor, so that $F^R$ is the right adjoint to $F$. 
    By Remark \ref{rem:explicit_adjoint}, there is an adjunction 
    \[
    G \colon \CC \rightleftarrows \widehat{\CC}_\K \times_{(\widehat{\CC}_\K)^{F(\K)\text{-}\loc}} \CC^{\K\text{-}\loc}: G^R
    \]
    whose effect on objects was recalled there. We claim that the unit and counit maps of this adjunction are equivalences. 

    We start off by checking that the unit map $\eta_x \colon x \to G^R Gx$ is an equivalence for all $x\in \CC$. We recorded in \Cref{rem:explicit_adjoint} that the unit map $\eta_x$ fits into a commutative diagram 
    \begin{equation}\label{unit}
    \begin{tikzcd}
        x \arrow[dr,dotted, "\eta_x"]\arrow[rrd, bend left, "\mathrm{unit}"] \arrow[ddr, bend right, "\mathrm{unit}"']& & \\
                               & G^RG x \arrow[d] \arrow[r] & L^R Lx \arrow[d]\\
                               & F^R Fx \arrow[r] & (\hat{L}F)^R(\hat{L}F)x \simeq (F^{\mathrm{loc}}L)^R(F^{\mathrm{loc}}L)x
    \end{tikzcd}
    \end{equation}
    where the small square is a pullback. Using the triangle $\Gamma x \to x \to Lx$ and the five lemma, it is enough to show that the unit map $\eta_x$ is an equivalence for $x$ torsion or local. We therefore distinguish two cases:
    \begin{itemize}
        \item If $x$ is torsion, then the top right and bottom right vertices of the diagram~(\ref{unit}) are zero as $Lx \simeq 0$. It follows that $\eta_x$ is an equivalence if and only if the unit map $x\to F^R Fx$ is so. Since $F^R$ preserves colimits, We can assume that $x$ compact in $\CC$ so that $x\in \CC^{\K \text{-}\tors}\cap \CC^\omega\subseteq (\CC^{\K\text{-}\cpl})^{\dual}$ by \Cref{megalemma}(b). In this case the unit map is an equivalence by \Cref{lem-torsunambig}(b). 
        \item If $x$ is local, then the unit map $x \to L^RL x$ is an equivalence. The 2-out-of-3 property for equivalences shows that $\eta_x$ is an equivalence if the lower horizontal map in (\ref{unit}), namely  
        \[ 
        \left( F^R Fx \to F^R(\hat{L}^R\hat{L}Fx) \right) = F^R\left(Fx\to \hat{L}^R\hat{L}Fx\right)
        \] 
        is an equivalence. Indeed, already the map $Fx\to \hat{L}^R\hat{L}Fx$ is an equivalence because $Fx$ is local by \Cref{cor-propertiesofF}(a).    
 \end{itemize}
Next, we check that the counit map $\epsilon_z \colon GG^Rz \to z$ is an equivalence for all objects $z=(x,y, \hat{L}x \simeq F^{\mathrm{loc}}y)\in \widehat{\CC}_\K  \times_{(\widehat{\CC}_\K)^{F(\K)\text{-}\loc}} \CC^{\K\text{-}\loc}$. Recall from Remark \ref{rem:explicit_adjoint} that we have a pullback diagram in $\cat C$
\begin{equation}\label{eq:GZ}
\begin{tikzcd}
    G^Rz \arrow[rrr,"\pi_L"] \arrow[d,"\pi_F"] & & & L^Ry  \arrow[d,"L^R(y\to (F^{\mathrm{loc}})^R F^{\mathrm{loc}}y)"] \\
    F^Rx \arrow[rrr,"F^R(x\to \hat{L}^R\hat{L}x)"] & & & F^R\hat{L}^R\hat{L}x\simeq L^R(F^{\mathrm{loc}})^R F^{\mathrm{loc}} y.
\end{tikzcd}
\end{equation}

Also, the two components of $\epsilon_z$ which we need to see are equivalences, are the compositions
\[ 
f\colon L(G^Rz)\xrightarrow{L(\pi_L)} LL^R y \to y \mbox{ and }
\]
\[ 
g\colon F(G^Rz) \xrightarrow{F(\pi_F)} FF^Rx \to x, 
\]
with the second maps being the counits.
To see that $f$ is an equivalence, note that the counit $LL^Ry\to y$ is an equivalence because $L$ is a localization, hence $f$ is an equivalence if and only if $L(\pi_L)$ is. Now $L(\pi_L)$ is a pullback of 
\[ 
LF^R(x\to \hat{L}^R\hat{L}x) \simeq (F^{\mathrm{loc}})^R(\hat{L}x \to \hat{L}\hat{L}^R\hat{L}x),
\]
using that $LF^R\simeq (F^{\mathrm{loc}})^R\hat{L}$ by \Cref{cor-propertiesofF}(c).
This latter map is an equivalence because $\hat{L}x\to \hat{L}\hat{L}^R\hat{L}x$ is, $\hat{L}$ being a localization. This shows that $f$ is an equivalence. Seeing that $g$ is an equivalence is a little more involved.
By applying $F$ to the rotation of \Cref{eq:GZ} and post composing with counits, we have $g$ appearing as the top horizontal composition in the following diagram:
\begin{equation}\label{diagramma}
\begin{tikzcd}
    g\colon  FG^Rz \arrow[rrr,"F(\pi_F)"] \arrow[d,"F(\pi_L)"'] & & & FF^Rx  \arrow[d,"FF^R(x\to \hat{L}^R\hat{L}x)"] \ar[rr] & & x\arrow[d]\\
    FL^Ry \arrow[rrr,"FL^R(y\to(F^{\mathrm{loc}})^RF^{\mathrm{loc}}y)"] & & & FF^R\hat{L}^R\hat{L}x \arrow[rr]& &  \hat{L}^R\hat{L}x.
\end{tikzcd}
\end{equation}
We start by claiming that the bottom horizontal composition is an equivalence. Unravelling the definitions, we see that the required map is given by the composite 
\[
FL^R y \xrightarrow{\mathrm{unit}} FL^R (F^\loc)^R F^\loc y\xrightarrow[\sim]{\mathrm{ident.}} FL^R (F^\loc)^R\hat{L}x \xrightarrow[\sim]{\mathrm{comm.}} FF^R \hat{L}^R \hat{L}x \xrightarrow{\mathrm{counit}} \hat{L}^R \hat{L}x
\]
where we used the identification $F^\loc y \simeq \hat{L}x$ part of the data of $z$, and the equivalence $L^R (F^\loc)^R \simeq F^R \hat{L}^R$ which is obtained by passing to right adjoints the natural equivalence witnessing the commutativity of the diagram involving $L$ and $F$. Since this latter equivalence is natural, we can exchange it with the identification map and obtain the composite
\[
FL^R y \xrightarrow{\mathrm{unit}} FL^R (F^\loc)^R F^\loc y\xrightarrow[\sim]{\mathrm{comm.}} FF^R \hat{L}^R (F^\loc)y \xrightarrow[\sim]{\mathrm{ident.}} FF^R \hat{L}^R \hat{L}x \xrightarrow{\mathrm{counit}} \hat{L}^R \hat{L}x.
\]
Next, using the naturality of the counit map we can exchange the last two maps and obtain
\[
FL^R y \xrightarrow{\mathrm{unit}} FL^R (F^\loc)^R F^\loc y\xrightarrow[\sim]{\mathrm{comm.}} FF^R \hat{L}^R (F^\loc)y \xrightarrow{\mathrm{counit}} \hat{L}^R F^\loc y \xrightarrow[\sim]{\mathrm{ident.}} \hat{L}^R \hat{L}x.
\]
The composite of the first three maps is precisely the BC map of $L^R (F^\loc)^R \simeq F^R \hat{L}^R$ which is an equivalence by \Cref{rem-BC}. This proves the claim as all the maps involved are equivalences. 

Going back at the diagram (\ref{diagramma}); we want to show that $g$ is an equivalence. Using the bottom composite in an equivalence and that the left square in the above diagram is a pull-back, an elementary argument which we leave to the reader shows that $g$ is an equivalence if and only if the right hand square, namely
\[\begin{tikzcd}
    FF^Rx  \arrow[d,"FF^R(x\to \hat{L}^R\hat{L}x)"'] \ar[rr] & & x\arrow[d]\\
    FF^R\hat{L}^R\hat{L}x \arrow[rr]& &  \hat{L}^R\hat{L}x
\end{tikzcd}
\]
is a pullback. To see this, passing to horizontal fibers reduces us to showing that the counit map
\[ 
FF^Rt\to t
\]
is an equivalence for every $t\in(\widehat{\cat C}_\K)^{F(\K)\text{-}\tors}$.
This was already estabished in \Cref{lem:control-fiber}(b) above.
\end{proof}

Recall that a $2$-ring is an essentially small, idempotent complete, stable and symmetric monoidal $\infty$-category. The collection of $2$-rings and symmetric monoidal exact functors between them can be organized into an $\infty$-category. This category admits limits and these are preserved by the forgetful functor to $\Cat_\infty$, see discussion in \cite[Subsection 2.1]{Mathew2016}.
\begin{Not}
For a $2$-ring $\BB$ and a set of objects $\K\subseteq \BB$, we write $\thickt{\K}$ for the thick tensor ideal generated by $\K$ in $\BB$.
Similarly, given $\CC\in \CAlg(\Pr_{\mathrm{st}}^L)$ and a set of objects $\K\subseteq\CC$, we write $\loct{\K}$ for the localizing ideal generated by $\K$ in $\CC$.
\end{Not}

\begin{Cor}\label{pullback-small}
    Let $\BB$ be a $2$-ring consisting of dualizable objects and $\K$ a set of objects of $\BB$. Then there is a pullback of $2$-rings
    \[
    \begin{tikzcd}
        \BB \arrow[r]\arrow[d,"\Lambda"'] & (\BB/\thickt{\K})^\natural\arrow[d]\\
        \Ind(\BB)^{\K\text{-}\mathrm {cpl},\mathrm{dual}}\arrow[r] & \left( \Ind(\BB)^{\K\text{-}\mathrm {cpl},\mathrm{dual}}/ \thickt{\Lambda\K}\right)^\natural.
    \end{tikzcd}
    \]
    where the horizontal arrows are the functors defining the Verdier quotients, the right vertical arrow is induced by $\Lambda$, and $(-)^\natural$ denotes the idempotent completion functor.
  \end{Cor}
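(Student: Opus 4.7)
The plan is to apply \Cref{thm-pullback} to $\CC := \Ind(\BB)$ and then pass to compact objects in all four corners. Since $\BB$ is a $2$-ring whose objects are all dualizable, $\Ind(\BB) \in \CAlg(\PrL)$ is rigidly-compactly generated with $\Ind(\BB)^\omega = \BB$. Applying \Cref{thm-pullback} with $\K \subseteq \BB = \CC^\omega$ produces a pullback in $\CAlg(\PrL)$
\[
\begin{tikzcd}
    \CC \arrow[d,"F"'] \arrow[r,"L"] & \CC^{\K\text{-}\loc} \arrow[d,"F^{\loc}"] \\
    \widehat{\CC}_{\K} \arrow[r,"\hat{L}"] & (\widehat{\CC}_\K)^{F(\K)\text{-}\loc}.
\end{tikzcd}
\]

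The second step is to verify that all four functors preserve compact objects. By construction $F = \Ind(\Lambda^{\dual})$ preserves compact objects, and both $L$ and $\hat{L}$ do so because they are smashing localizations at localizing ideals generated by compact objects ($\K$ and $F(\K)$ respectively). Compact preservation of $F^{\loc}$ then follows from \Cref{cor-propertiesofF}, which shows in particular that $F^{\loc}$ is an internal left adjoint.

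The third step is to identify the compact objects of the four corners. By definition $\CC^\omega = \BB$ and $(\widehat{\CC}_\K)^\omega = (\CC^{\K\text{-}\cpl})^{\dual}$. Since $L$ and $\hat{L}$ are Bousfield localizations whose kernels are compactly generated localizing ideals, a standard result of Neeman--Thomason type identifies the compact objects of the target with the idempotent completion of the Verdier quotient of the compact objects of the source. This gives $(\CC^{\K\text{-}\loc})^\omega \simeq (\BB/\thickt{\K})^\natural$ and, applied to the rigidly-compactly generated $\widehat{\CC}_\K$, also $((\widehat{\CC}_\K)^{F(\K)\text{-}\loc})^\omega \simeq ((\CC^{\K\text{-}\cpl})^{\dual}/\thickt{\Lambda\K})^\natural$.

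Finally, I would invoke the symmetric monoidal equivalence, given by $(-)^\omega$ with inverse $\Ind$, between $\CAlg(\Cat_\infty^{\mathrm{perf}})$ and the (non-full) subcategory of $\CAlg(\PrL)$ whose objects are compactly generated and whose morphisms preserve compact objects. This equivalence preserves limits, and the previous steps show that the pullback from \Cref{thm-pullback} lies in this subcategory. Applying $(-)^\omega$ therefore yields a pullback of $2$-rings whose corners are those identified above, matching exactly the diagram in the statement. The main (mild) obstacle is confirming that the pullback computed in all of $\CAlg(\PrL)$ coincides with the pullback in the compact-preserving subcategory; this is automatic here because the apex $\CC = \Ind(\BB)$ is already compactly generated and the two projections from it preserve compacts, so the universal property checked in $\CAlg(\PrL)$ restricts to one in the smaller subcategory.
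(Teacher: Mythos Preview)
Your approach is essentially the paper's: apply \Cref{thm-pullback} to $\CC=\Ind(\BB)$, identify the compact objects of each corner via Neeman--Thomason, and pass to compacts. The only real difference is in how you justify that passing to compacts preserves the pullback.

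The paper simply applies the functor $(-)^{\dual}$ to the square. Since dualizable objects in a limit of symmetric monoidal $\infty$-categories are computed pointwise, $(-)^{\dual}$ preserves limits in $\CAlg(\PrL)$, and because every corner is rigidly-compactly generated one has $(-)^{\dual}\simeq(-)^\omega$ on each of them. This is a one-line argument.

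Your route through the equivalence $\Ind\colon \CAlg(\Cat_\infty^{\mathrm{perf}})\simeq \CAlg(\PrL)^{\omega\text{-gen},\,\omega\text{-pres}}$ is fine in principle, but your last sentence contains a genuine gap: knowing that the apex and its two projections lie in the non-full subcategory does \emph{not} automatically imply that the pullback universal property restricts. Given a compactly generated test object $T$ with compact-preserving maps to $\widehat{\CC}_\K$ and $\CC^{\K\text{-}\loc}$, you must still check that the induced map $T\to\CC$ preserves compacts. This can be done (e.g.\ using that mapping spaces in the pullback are pullbacks of mapping spaces, together with the fact that filtered colimits commute with finite limits of spaces), but it is not ``automatic'' for the reason you give. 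Replacing this step with the observation that $(-)^{\dual}$ preserves limits, as the paper does, closes the gap cleanly.
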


\begin{proof}
    We apply~\Cref{thm-pullback} to $\Ind(\BB)$, which is rigidly-compactly generated by our assumption on $\BB$, and obtain a pullback square 
    \[
      \begin{tikzcd}
         \Ind(\BB) \arrow[r, "L"] \arrow[d,"F"'] & \Ind(\BB)^{\K\text{-}\loc} \arrow[d,"F^{\mathrm{loc}}"]\\
         \widehat{\Ind(\BB)}_\K \arrow[r,"\hat{L}"] & (\widehat{\Ind(\BB)}_\K)^{F(\K)\text{-}\loc}.
     \end{tikzcd}
    \]
    Note that all the above categories are rigidly-compactly generated so the functors $(-)^{\dual}$ and $(-)^\omega$ agree in this case. Applying the functor $(-)^{\dual}\simeq (-)^\omega$ to the above square we obtain another pullback square whose left most entries agree with the ones of the corollary. We only need to identify the two categories on the right hand entries of the square as in the corollary.  
    
    By definition of $\K$-local and $\K$-torsion objects, there is a fibre sequence
    \[
    \loct{\K} \to \Ind(\BB) \xrightarrow{L} \Ind(\BB)^{\K\text{-}\loc}.
    \]
    In other words, we can identify the category of local objects as a Verdier quotient of $\Ind(\BB)$ by the subcategory of torsion objects. By the Neeman-Thomason localization theorem \cite[Theorem 2.1]{Neeman92}, we can described the compact objects in the local category as
    \[
    (\Ind(\BB)^{\K\text{-}\loc})^\omega\simeq (\BB /\thickt{\K})^\natural
    \]
    and under this identification the localization functor $L$ agrees with the canonical functor to the Verdier quotient. This identifies the top right entry of the square and the top horizontal arrow as in the corollary. Applying the same argument to $(\widehat{\Ind(\BB)}_\K)^{F(\K)\text{-}\loc}$ let us identify the bottom right entry as in the corollary.   
\end{proof}

\section{Equivariant preliminaries}\label{section:EQ1}
In this section we record some useful results on categories with an action of a finite group. In the next section we will apply these results and the main result of this paper in the setting of equivariant homotopy theory to obtain a variant of the isotropy separation sequence. We start off with the following:

\begin{Def}
    Give a group $G$ and $\cat C \in \Fun(BG, \Cat_\infty)$, we can form 
    \[
    \cat C^{hG} := \lim_{BG}\cat C \in\Cat_\infty.
    \]
\end{Def}

We now record the main example for this paper.

\begin{Exa}\label{def-hG-modA}
Let $G$ be a group and consider $A\in \Fun(BG, \CAlg)$. The category of $A$-modules inherits a canonical $G$-action, yielding a functor
\[
\Mod(A)\colon BG \to \PrL.
\]
We can then form $\Mod(A)^{hG}$
which we think of as the category of $A$-modules endowed with a semi-linear $G$-action. 
We note that the functor $\Mod(A)$ naturally lifts to $\CAlg(\PrL)$ and so canonically $\Mod(A)^{hG}\in \CAlg(\PrL)$. We write $\Perf(A)^{hG}$ for the full subcategory of dualizable objects in $\Mod(A)^{hG}$.
\end{Exa}

We next show how adjunctions are preserved under passage to invariants under a group action. This is a special case of the following more general exposition.\\
Given a simplicial set $S$, recall from \cite[Definition 4.7.4.16]{HA}, the \emph{non-full} subcategory
\begin{equation}\label{eq:left-adjointables}
\Fun^{\mathrm{LAd}}(S,\Cat_\infty)\subseteq\Fun(S,\Cat_\infty)
\end{equation}
with objects those functors whose transition maps admit left adjoints, and morphisms those natural transformations which yield left adjointable squares. There is an analogous definition of 
$\Fun^{\mathrm{RAd}}(S, \Cat_\infty)$ using right adjointability.

\begin{Lem}\label{lem:groupoid-core}
Let $S$ be a simplicial set. Applying the groupoid cores to (\ref{eq:left-adjointables}), we obtain a fully faithful inclusion
\[ 
\Fun^{\mathrm{LAd}}(S,\Cat_\infty)^{\simeq}\subseteq\Fun(S,\Cat_\infty)^{\simeq}.
\]
The analogous statement holds for $\Fun^{\mathrm{RAd}}(S, \Cat_\infty)$ as well.
\end{Lem}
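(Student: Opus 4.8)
The plan is to unwind the definitions of the (non-full) subcategories $\Fun^{\mathrm{LAd}}(S,\Cat_\infty)$ and $\Fun^{\mathrm{RAd}}(S,\Cat_\infty)$ and reduce the claim to a statement about the space-level mapping spaces, since ``fully faithful after passing to groupoid cores'' means precisely that for any two functors $F,G\in\Fun^{\mathrm{LAd}}(S,\Cat_\infty)$ the inclusion induces an equivalence on the spaces of equivalences $\mathrm{Map}^{\simeq}(F,G)$. First I would recall that by \cite[Definition 4.7.4.16]{HA} (and the surrounding discussion) a morphism in $\Fun^{\mathrm{LAd}}(S,\Cat_\infty)$ is a natural transformation $\eta\colon F\to G$ all of whose naturality squares are left adjointable, i.e. the associated Beck--Chevalley map is an equivalence; so the non-full inclusion becomes fully faithful after taking cores as soon as we check that, given functors $F,G$ with values in categories where all transition maps admit left adjoints, \emph{every} natural \emph{equivalence} $\eta\colon F\xrightarrow{\sim} G$ automatically has all its squares left adjointable. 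This is the crux of the argument.

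The key observation making this work is that if $\eta$ is an equivalence, then in each naturality square both horizontal (or vertical) legs are equivalences, and the Beck--Chevalley transformation of a square in which one pair of parallel edges consists of equivalences is automatically an equivalence --- this is a standard fact about mates, which I would cite or prove in one line: passing to left adjoints of the edges that are equivalences just inverts them, and the mate of an invertible 2-cell against invertible 1-cells is invertible. Concretely, for a square
\[
\begin{tikzcd}
A \arrow[r,"u"] \arrow[d,"f"'] & B \arrow[d,"g"] \\
A' \arrow[r,"v"'] & B'
\end{tikzcd}
\]
with $f,g$ equivalences, the Beck--Chevalley map $u f^{L}\to g^{L} v$ (here $f^{L}\simeq f^{-1}$, $g^{L}\simeq g^{-1}$) is identified with the given invertible 2-cell $v f\simeq g u$ transported across these inverses, hence invertible. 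Therefore the space of morphisms in $\Fun^{\mathrm{LAd}}(S,\Cat_\infty)^{\simeq}$ from $F$ to $G$ is the full space of natural equivalences $F\to G$, which is exactly $\mathrm{Map}^{\simeq}_{\Fun(S,\Cat_\infty)}(F,G)$, giving full faithfulness on cores. The statement for $\Fun^{\mathrm{RAd}}$ follows by the dual argument, or formally by applying the already-proved statement to $S^{\mathrm{op}}$ and the functor $(-)^{\mathrm{op}}$, using that a functor admits a left adjoint iff its opposite admits a right adjoint.

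I would also remark that one must be slightly careful that $\Fun^{\mathrm{LAd}}(S,\Cat_\infty)$ really is a \emph{subcategory} (as asserted in \cite{HA}), so that ``fully faithful on cores'' is the right notion of the inclusion being as close to an equality as possible; no further size or smallness issues arise since $S$ is an arbitrary simplicial set and $\Cat_\infty$ is (very) large but fixed. The only genuine mathematical content, and the step I expect to require the most care to phrase correctly, is the mates/Beck--Chevalley computation showing invertibility of the adjointability square when two parallel edges are equivalences; everything else is bookkeeping about what the objects and morphisms of these decorated functor categories are.
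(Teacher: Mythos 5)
Your overall plan is the same as the paper's: unwind the definitions, reduce to showing that for a natural \emph{equivalence} $\alpha\colon F\xrightarrow{\sim}G$ between objects of $\Fun^{\mathrm{LAd}}(S,\Cat_\infty)$, each naturality square is automatically left adjointable, and observe that this is a statement about mates. That reduction is correct. The gap is in the ``concrete'' justification of the mates statement, where you compute the \emph{wrong} mate.

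In the naturality square with horizontal edges $u=F(\gamma)$, $v=G(\gamma)$ (the transition maps, which have left adjoints $u^L,v^L$) and vertical edges $f=\alpha(s)$, $g=\alpha(s')$ (the components of the natural equivalence), left adjointability of the square means that the Beck--Chevalley map $v^L g\to f u^L$ --- i.e.\ the mate taken with respect to the \emph{transition maps} $u,v$ --- is an equivalence; the paper writes this as $G^L(\gamma)\alpha(s')\Rightarrow\alpha(s)F^L(\gamma)$. What you compute instead is the mate $u f^L\to g^L v$ with respect to the \emph{vertical} maps $f,g$, i.e.\ you pass to the inverses of the $\alpha$'s and transport the 2-cell across them. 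That mate is indeed trivially an equivalence, but it is not the one whose invertibility defines membership in $\Fun^{\mathrm{LAd}}$; in fact, up to relabeling it is exactly the auxiliary 2-cell $\mu_\gamma$ the paper constructs as a \emph{first step}. To get from there to the relevant mate $v^L g\to f u^L$, one needs the pasting argument: vertically paste the square $[\nu_\gamma]$ with $[\mu_\gamma^{-1}]$ in both orders to obtain identity 2-cells, and then use that mates with respect to the horizontal edges compose under vertical pasting, so $\mathrm{BC}(\nu_\gamma)$ acquires a two-sided inverse. Your statement ``the Beck--Chevalley transformation of a square in which one pair of parallel edges consists of equivalences is automatically an equivalence'' is the correct lemma, but the one-line proof you offer establishes a different (and much easier) fact about a different mate, so as written it does not close the argument. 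Your reduction of the $\mathrm{RAd}$ case to the $\mathrm{LAd}$ case via $(-)^{\mathrm{op}}$ is a nice streamlining over the paper's ``left to the reader,'' though note the equivalence $\Fun^{\mathrm{RAd}}(S,\Cat_\infty)\simeq\Fun^{\mathrm{LAd}}(S,\Cat_\infty)^{\mathrm{op}}$ is given by postcomposing with $(-)^{\mathrm{op}}\colon\Cat_\infty\to\Cat_\infty$ (no need to pass to $S^{\mathrm{op}}$, since opposites reverse the direction of natural transformations, not the shape of the indexing diagram).
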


\begin{proof}
We treat the case of left adjointable squares and leave the analogous case to the reader. The inclusion is clearly faithful, so the claim is that it is full, i.e.
given $F,G\in\Fun^{\mathrm{LAd}}(S, \Cat_\infty)$ and an equivalence $\alpha\colon F\xrightarrow{\sim}
G$ in $\Fun(S, \Cat_\infty)$, we claim that we have $\alpha\in\Fun^{\mathrm{LAd}}(S, \Cat_\infty)$.
By definition, this means that for every edge $\gamma\colon s\to s'$ in $S$, the following square
 \[
    \begin{tikzcd}
        {F(s)} &  {F(s')} \\
        {G(s)} & {G(s').}
        \arrow["\alpha(s')",from=1-2, to=2-2]
        \arrow["F(\gamma)", from=1-1, to=1-2]
        \arrow["G(\gamma)"', from=2-1, to=2-2]
        \arrow["\alpha(s)"', from=1-1, to=2-1]
        \arrow["\nu_\gamma", shorten <=7pt, shorten >=7pt, Rightarrow, from=1-2, to=2-1]
    \end{tikzcd}
    \]
    is (horizontally) left adjointable. By assumption, $F(\gamma)$ and $G(\gamma)$ admit left adjoints $F^L(\gamma)$ and $G^L(\gamma)$, and there is an equivalence $\nu_\gamma\colon\alpha(s')F(\gamma)\Rightarrow G(\gamma)\alpha(s)$, and we need to see that its Beck-Chevalley transform
$\mathrm{BC}(\nu_\gamma)\colon G^L(\gamma)\alpha(s')\Rightarrow\alpha(s) F^L(\gamma)$ is an equivalence. For this, we consider the whiskering $\mu_\gamma:=\alpha(s')^{-1}\circ\nu_{\gamma}\cdot\alpha(s)^{-1}$, whose inverse is an equivalence 
\[ \mu^{-1}_\gamma\colon \alpha(s')^{-1}G(\gamma)\Rightarrow F(\gamma)\alpha(s)^{-1}.
\]
By construction we have a commutative diagram
\[
    \begin{tikzcd}[column sep =1.2 cm, row sep =1.2 cm]
        {G(s)} &  {G(s')} \\
        {F(s)} &  {F(s')} \\
        {G(s)} & {G(s')} \\
        {F(s)} & {F(s')}
        \arrow["\alpha(s')^{-1}",from=1-2, to=2-2]
        \arrow["\alpha(s)^{-1}"', from=1-1, to=2-1]
        \arrow["F(\gamma)", from=2-1, to=2-2]
        \arrow["G(\gamma)", from=1-1, to=1-2]
        \arrow["\mu_\gamma^{-1}", shorten <=12pt, shorten >=12pt, Rightarrow, from=1-2, to=2-1]
        \arrow["\alpha(s)"', from=2-1, to=3-1]
        \arrow["\alpha(s)^{-1}"', from=3-1, to=4-1]
        \arrow["G(\gamma)", from=3-1, to=3-2]
        \arrow["F(\gamma)", from=4-1, to=4-2]
        \arrow["\alpha(s')", from=2-2, to=3-2]
         \arrow["\alpha(s')^{-1}", from=3-2, to=4-2]
           \arrow["\nu_\gamma", shorten <=12pt, shorten >=12pt, Rightarrow, from=2-2, to=3-1]
            \arrow["\mu^{-1}_\gamma", shorten <=12pt, shorten >=12pt, Rightarrow, from=3-2, to=4-1]
    \end{tikzcd}
    \]
    such that if we paste the top two squares we obtain $\mathrm{id} \colon G(\gamma) \Rightarrow G(\gamma)$, and if we paste the bottom two squares we obtain $\mathrm{id}\colon F(\gamma) \Rightarrow F(\gamma)$. Passing to the associated diagram of BC-transformations and using the composition law for BC-maps, we see that $BC(\nu_\gamma)$ admits both a left and a right inverse, and so it is an equivalence. 
\end{proof}

We will use this result through its following corollary.

\begin{Cor}\label{cor:adjoints-of-fixed-points}
Let $G$ be a group and $f\colon \cat C\to \cat D$ a map in $\Fun(BG,\Cat_\infty)$ whose image in $\Cat_\infty$ admits a left (resp. right) adjoint. Then this adjoint is canonically $G$-equivariant, and passage to homotopy fixed-points yields a commutative diagram
\[
\begin{tikzcd}
    \cat C^{hG} \arrow[r,"f^{hG}"] \arrow[d, "\mathrm{fgt}"'] & \cat D^{hG} \arrow[d,"\mathrm{fgt}"]\\
    \cat C \arrow[r, "f"] & \cat D
\end{tikzcd}
\]
which is (horizontally) left (resp. right) adjointable.
\end{Cor}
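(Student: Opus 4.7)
The plan is to deduce both the canonical $G$-equivariance of the adjoint and the adjointability of the fixed-point square directly from \Cref{lem:groupoid-core} applied to $S=\Delta^1$.

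First, I would unstraighten the morphism $f$ in $\Fun(BG,\Cat_\infty)$ to a functor $\widetilde f\colon BG\to\Fun(\Delta^1,\Cat_\infty)$. Since $BG$ is a Kan complex, $\widetilde f$ factors through the groupoid core $\Fun(\Delta^1,\Cat_\infty)^{\simeq}$. The hypothesis that the underlying functor of $f$ admits a left adjoint means that the unique vertex of $\widetilde f$ lands in the subgroupoid $\Fun^{\mathrm{LAd}}(\Delta^1,\Cat_\infty)^{\simeq}$. Since that inclusion is full by \Cref{lem:groupoid-core}, the whole functor $\widetilde f$ factors through $\Fun^{\mathrm{LAd}}(\Delta^1,\Cat_\infty)^{\simeq}$; equivalently, for every $g\in G$ the naturality square of the $g$-action on $f$ is left adjointable. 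By standard mate calculus (cf.\ \cite[Corollary 4.7.4.18]{HA}), this is the same data as a lift of $f$ to an adjunction inside $\Fun(BG,\Cat_\infty)$: a $G$-equivariant left adjoint $f^L$, together with $G$-equivariant unit and counit satisfying the triangle identities. The contractibility of the space of such lifts (which comes from the fully faithful statement of the lemma) is what justifies the word ``canonically''.

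Second, I would apply $(-)^{hG}=\lim_{BG}\colon\Fun(BG,\Cat_\infty)\to\Cat_\infty$ to this adjunction. Since $\lim_{BG}$ preserves limits, it sends unit, counit, and triangle identity data to data of the same sort, producing an adjunction $(f^L)^{hG}\dashv f^{hG}$ in $\Cat_\infty$. This identifies $(f^L)^{hG}$ as the left adjoint of $f^{hG}$.

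Third, for horizontal left adjointability of the displayed square, I would use that the forgetful map $\mathrm{fgt}\colon\cat C^{hG}\to\cat C$ is the component, at the unique object of $BG$, of the universal cone of $\lim_{BG}\cat C$, and is therefore natural with respect to $G$-equivariant functors. Applied to the $G$-equivariant functor $f^L$, this naturality yields a canonical commutative square $f^L\circ\mathrm{fgt}\simeq\mathrm{fgt}\circ (f^L)^{hG}$, which is precisely the square of horizontal left adjoints of the square in the statement. By mate calculus, such a commuting square of left adjoints is equivalent to the Beck--Chevalley transformation of the original square being an equivalence, i.e.\ to horizontal left adjointability. The right-adjoint case follows in the same way, using the half of \Cref{lem:groupoid-core} for $\Fun^{\mathrm{RAd}}$ in place of $\Fun^{\mathrm{LAd}}$ (or equivalently by passing to opposite categories, per \Cref{rem-localization-op}-style reasoning).

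The main step, and the only one that genuinely uses the assumption that $G$ is a group, is the first: only because $BG$ is a Kan complex can one upgrade the mere pointwise existence of a left adjoint to a fully structured $G$-equivariant adjunction via the full faithfulness of \Cref{lem:groupoid-core}. The rest is formal, relying only on $\lim_{BG}$ preserving adjunction data and on the naturality of the limit cone furnishing the required Beck--Chevalley equivalence.
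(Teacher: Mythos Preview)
Your first step is exactly the paper's: encode $f$ as $BG\to\Fun(\Delta^1,\Cat_\infty)^{\simeq}$ and use \Cref{lem:groupoid-core} to factor through $\Fun^{\mathrm{LAd}}(\Delta^1,\Cat_\infty)^{\simeq}$. The difference lies in how you finish. The paper concludes in one line by invoking \cite[Corollary~4.7.4.18]{HA}: since $\Fun^{\mathrm{LAd}}(\Delta^1,\Cat_\infty)\subseteq\Fun(\Delta^1,\Cat_\infty)$ is closed under limits, the $BG$-limit of the diagram lands there, which simultaneously says that $f^{hG}$ admits a left adjoint (object condition) and that the cone square with the forgetful functors is left adjointable (morphism condition). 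You instead unpack this into three steps: produce a $G$-equivariant adjunction, apply $\lim_{BG}$ to it, and then argue adjointability via naturality of the forgetful cone.

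Your unpacking is essentially correct, but there is a small gap in the last step. Having \emph{some} equivalence $f^L\circ\mathrm{fgt}\simeq\mathrm{fgt}\circ(f^L)^{hG}$ does not by itself force the Beck--Chevalley map of the original square to be an equivalence; you must check that this particular equivalence (coming from naturality of the limit cone) is the mate of the original commutativity datum. Concretely, this requires that the unit and counit you built for $(f^L)^{hG}\dashv f^{hG}$, namely $\lim_{BG}$ applied to the equivariant unit and counit, are intertwined by the forgetful maps with the underlying unit and counit. This is true and not hard, but it is exactly the content that \cite[Corollary~4.7.4.18]{HA} packages for you, so the paper's direct citation is both shorter and avoids the need to verify this compatibility by hand. (Compare \Cref{lem-equivalence-BC}, where the paper is careful about precisely this distinction between an abstract equivalence and the Beck--Chevalley map.)
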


\begin{proof}
We prove the claims involving left adjoints and leave the analogous proof involving right adjoints to the reader.
The given data can be encoded by a map $BG\to\Fun(\Delta^1,\Cat_\infty)$ which automatically factors through the groupoid core. In fact, it even factors through 
$\Fun^{\mathrm{LAd}}(\Delta^1, \Cat_\infty)^\simeq$, since by \Cref{lem:groupoid-core} this can be checked on the unique object, where it amounts to our assumption on $f$. The claim now follows from the fact that $\Fun^{\mathrm{LAd}}(\Delta^1, \Cat_\infty)\subseteq\Fun(\Delta^1, \Cat_\infty))$ is closed under limits by \cite[Corollary 4.7.4.18]{HA}. 
\end{proof}

\begin{Cons}
    For any subgroup $H \subseteq G$, we have a diagram of adjunctions
\[ \begin{tikzcd}
\Fun(BH,\Cat_\infty) \ar[rr, shift right=0.5 ex, "\mathrm{CoInd}_H^G" '] & & \Fun(BG,\Cat_\infty) \ar[ll, shift right = 0.5 ex, "\mathrm{Res}^G_H" '] \ar[rr, shift right = 0.5 ex, "(-)^{hG}" '] & &  \Cat_\infty \ar[ll, shift right = 0.5 ex, "\mathrm{const}" ' ]
\end{tikzcd}
\]
with left adjoints displayed on top. We note that $(-)^{hG}\circ\mathrm{Coind}^G_H\simeq (-)^{hH}$ which can be deduced from the equivalence $\mathrm{Res}^G_H\circ \mathrm{const}\simeq \mathrm{const}$ by passing to right adjoints. For any $\cat C\in \Fun(BG, \Cat_\infty)$, the unit map of the restriction-coinduction adjunction induces a restriction map
\[
\res_H^G \colon \CC^{hG} \to \mathrm{Coind}^G_H(\mathrm{Res}^G_H(\cat C))^{hG}\simeq \CC^{hH}
\]
\end{Cons}

The main result of this section is the following result. 

\begin{Prop}\label{lem:leftright-adjoint}
Let $G$ be a group and $\cat C\in \Fun(BG, \Cat_\infty)$ whose underlying $\infty$-category is additive. For every subgroup $H \subseteq G$ of finite index, the functor
\[ 
\res_H^G\colon \cat C^{hG}\to \cat C^{hH}
\]
admits a left and a right adjoint, and these two functors are equivalent. Furthermore these adjunctions preserves dualizable objects.
\end{Prop}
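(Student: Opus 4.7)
The plan is to apply Corollary \ref{cor:adjoints-of-fixed-points} to the unit map of the restriction-coinduction adjunction. Using the identification $(-)^{hG}\circ \mathrm{CoInd}_H^G \simeq (-)^{hH}$, the functor $\res_H^G$ is obtained by applying $(-)^{hG}$ to the unit $\eta\colon \cat C \to \mathrm{CoInd}_H^G \mathrm{Res}_H^G \cat C$ in $\Fun(BG,\Cat_\infty)$. Since $[G:H]$ is finite, choosing coset representatives identifies the underlying $\infty$-category of $\mathrm{CoInd}_H^G \mathrm{Res}_H^G \cat C$ with the finite product $\prod_{G/H}\cat C$ and presents the underlying functor of $\eta$ as the diagonal $\Delta$ composed with an autoequivalence of $\prod_{G/H}\cat C$ coming from the $G$-action.

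Additivity of $\cat C$ and finiteness of $G/H$ ensure that $\Delta$, and hence $\eta$, admits both a left and a right adjoint in $\Cat_\infty$, both canonically equivalent to the biproduct functor $\bigoplus_{G/H}$. By Corollary \ref{cor:adjoints-of-fixed-points}, these adjoints lift canonically to $\Fun(BG,\Cat_\infty)$, and passing to $G$-fixed points yields left and right adjoints of $\res_H^G$. Since the underlying adjoints already agree, the fixed-point versions agree too, yielding the ambidexterity claim.

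For the preservation of dualizability, the recipe above identifies the underlying object in $\cat C$ of $\Ind_H^G Y$ (for $Y \in \cat C^{hH}$) as the biproduct $\bigoplus_{G/H} Y|_{\cat C}$. Since the forgetful functor $\cat C^{hH}\to\cat C$ is symmetric monoidal, $Y$ dualizable in $\cat C^{hH}$ implies $Y|_{\cat C}$ is dualizable in $\cat C$, hence so is the finite biproduct $\bigoplus_{G/H} Y|_{\cat C}$. To lift this to dualizability in $\cat C^{hG}$, we use that the forgetful functor $\cat C^{hG}\to\cat C$ detects dualizable objects: any equivariant object $X$ whose underlying object is dualizable has a dual $X^\vee\in\cat C$ that carries a canonical $G$-action induced by the contravariant functoriality of duality, and evaluation/coevaluation maps that are equivariant by naturality. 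The same argument applies to $\mathrm{CoInd}_H^G\simeq\Ind_H^G$.

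The main obstacle is the explicit identification of the underlying functor of $\eta$ as a twisted diagonal, which amounts to a concrete description of the right Kan extension of $\mathrm{Res}_H^G \cat C$ along $BH\hookrightarrow BG$ in $\Cat_\infty$. Once this is in hand, the rest is a clean application of Corollary \ref{cor:adjoints-of-fixed-points} combined with the ambidexterity of finite biproducts in additive $\infty$-categories.
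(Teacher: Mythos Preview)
Your proof is correct and follows essentially the same route as the paper's: apply \Cref{cor:adjoints-of-fixed-points} to the unit $\eta\colon \cat C\to \CoInd_H^G\mathrm{Res}_H^G\cat C$, identify its underlying functor with the diagonal into $\prod_{G/H}\cat C$ up to an autoequivalence, use additivity to see that the left and right adjoints of this diagonal coincide with the biproduct functor, and then invoke detection of dualizability under the conservative symmetric monoidal forgetful functor $\cat C^{hG}\to\cat C$ together with closure of dualizables under finite sums. The paper phrases the dualizability step as ``dualizability is detected on the vertices of a limit diagram''; your more explicit description via functoriality of duals amounts to the same thing.
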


\begin{proof}
We apply \Cref{cor:adjoints-of-fixed-points} to the unit map
\[ 
f\colon \cat C\to\mathrm{Coind}^G_H(\mathrm{Res}^G_H(\cat C)).
\]
We observe that $\mathrm{Res}^G_e\mathrm{Coind}^G_H(\mathrm{Res}^G_H(\cat C)) \simeq \prod_{G/H}\cat C$, and that under this identification the map $\mathrm{Res}^G_e(f)$ becomes informally given by $x \mapsto (gx)_{g \in G/H}$ for some choice of representatives of $G/H$ \footnote{Denote by $*_G\in BG$ the unique object and by $i\colon H\subseteq G$ the given inclusion. To see the claim use the formula for the right Kan extension which describe $\mathrm{Res}^G_e\mathrm{Coind}^G_H(X)$ as a limit over the slice category $BH_{*_G/Bi}$. We note that there is an equivalence of discrete categories $BH_{*_G/Bi}\simeq G/H$ under which the canonical map $BH_{*_G/Bi}\to BH$ identifies with the constant map.}.
We also observe that $\mathrm{Res}^G_e(f)$ differs from the diagonal $\Delta \colon \CC \to \prod_{G/H} \CC$ by composition with a non-equivariant equivalence $\prod_{G/H} \CC \simeq \prod_{G/H}\CC$, $(x)_{g\in G/H} \mapsto (g^{-1}x)_{g \in G/H}$ for the same choice of representatives for $G/H$.
Therefore $\mathrm{Res}^G_e(f)$ admits a left (resp. right) adjoint if $\cat C$ admits finite coproducts (resp. finite products), and these are equivalent if $\cat C$ is additive. It then follows that the map 
\[
\res_H^G \colon \cat C^{hG}\xrightarrow{f^{hG}}\mathrm{Coind}^G_H(\mathrm{Res}^G_H(\cat C))^{hG}\simeq \cat C^{hH}
\]
admits a left and right adjoint and that they agree. The last claim about dualizability follows from the fact that finite sums of dualizables are dualizable and that dualizability is detected on the vertices of a limit diagram.
\end{proof}

\begin{Not}\label{not-induced}
    Let $G$ be a group and $A\in \Fun(BG, \CAlg)$. For every subgroup $H \subseteq G$ of finite index, we denote the left and right adjoint of the restriction functor
\[ 
\res_H^G\colon \Mod(A)^{hG}\to \Mod(A)^{hH}
\]
respectively by $G \circledast_H (-)$ and $(-)^{\circledast_H G}$. In the special case where $H=e$, we will simply write $G \circledast (-)$ and $(-)^{\circledast G}$.
\end{Not}
We finish this section with the following result. 

\begin{Lem} 
Let $G$ be a group, $\cat C$ and $\cat D$ be $\infty$-categories with finite products and $f\colon \cat C\to \cat D$ a map in $\Fun(BG,\Cat_\infty)$ whose underlying functor preserves finite products. For each subgroup of finite index $H\subseteq G$, the induces commutative square
\[\begin{tikzcd}
	{\cat C^{hG}} & {\cat D^{hG}} \\
	{\cat C^{hH}} & {\cat D^{hH}}
	\arrow["f^{hG}", from=1-1, to=1-2]
	\arrow["\res^G_H"', from=1-1, to=2-1]
	\arrow["\res_H^G", from=1-2, to=2-2]
	\arrow["f^{hH}",from=2-1, to=2-2]
\end{tikzcd}\]
is vertically right adjointable. 
\end{Lem}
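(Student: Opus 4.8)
The plan is to reduce the statement to the general adjointability result \Cref{cor:adjoints-of-fixed-points}, exactly as in the proof of \Cref{lem:leftright-adjoint}. The square in question is obtained by applying the functor $(-)^{hG}$ to the diagram of $G$-equivariant categories encoding $f\colon \cat C\to\cat D$, and then also applying $(-)^{h?}$ after restricting along $H\subseteq G$; both vertical maps $\res^G_H$ arise, as recalled in the preceding \texttt{Cons}, from the unit of the $\mathrm{Res}\text{-}\mathrm{CoInd}$ adjunction. So I would first re-express the whole square as the image under $(-)^{hG}$ of the naturality square of $f$ against that unit map, living in $\Fun(BG,\Cat_\infty)$, and then invoke the fact that $(-)^{hG}$ carries (vertically) right adjointable squares to right adjointable squares.

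Concretely, here is the sequence of steps. First, form the commutative square in $\Fun(BG,\Cat_\infty)$
\[
\begin{tikzcd}
	\cat C \arrow[r,"f"] \arrow[d,"\eta_{\cat C}"'] & \cat D \arrow[d,"\eta_{\cat D}"] \\
	\mathrm{CoInd}^G_H\mathrm{Res}^G_H(\cat C) \arrow[r,"\mathrm{CoInd}^G_H\mathrm{Res}^G_H(f)"] & \mathrm{CoInd}^G_H\mathrm{Res}^G_H(\cat D)
\end{tikzcd}
\]
given by naturality of the unit $\eta$ of $\mathrm{Res}^G_H\dashv\mathrm{CoInd}^G_H$. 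Second, observe that under $(-)^{hG}$ this becomes precisely the square in the statement, using the identification $(-)^{hG}\circ\mathrm{CoInd}^G_H\simeq(-)^{hH}$ recalled in the construction above, and that $(-)^{hG}$ applied to $f$ is $f^{hG}$ while $(-)^{hG}$ applied to $\mathrm{CoInd}^G_H\mathrm{Res}^G_H(f)$ is $f^{hH}$. Third, check that the displayed square in $\Fun(BG,\Cat_\infty)$ is vertically right adjointable: the vertical maps $\eta_{\cat C},\eta_{\cat D}$ admit right adjoints after applying $(-)^{hG}$ because, as in the proof of \Cref{lem:leftright-adjoint}, restricting to the trivial subgroup turns $\eta_{\cat C}$ into (a twist of) the diagonal $\cat C\to\prod_{G/H}\cat C$, which admits a right adjoint since $\cat C$ has finite products and $f$ preserves them; and the Beck--Chevalley transformation of the square is computed componentwise as the projection-formula map for finite products of a product-preserving functor, hence an equivalence. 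Fourth, apply \Cref{cor:adjoints-of-fixed-points} (or rather its method) to conclude that passing to $(-)^{hG}$ preserves this right adjointability, giving the desired conclusion for the square in the statement.

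The one point needing slight care — and the main (mild) obstacle — is step three: verifying that the Beck--Chevalley map of the $\Fun(BG,\Cat_\infty)$-square is an equivalence \emph{before} taking fixed points, i.e. that the square $f\colon\cat C\to\cat D$ against $\eta$ is vertically right adjointable in $\Fun(BG,\Cat_\infty)$. Since right adjointability of a square of functors of $G$-categories can be checked after restricting to the underlying categories (a morphism in $\Fun^{\mathrm{RAd}}(S,\Cat_\infty)$ is detected objectwise, by the analogue of \Cref{lem:groupoid-core}), this reduces to the underlying square, which up to the non-equivariant twist $\prod_{G/H}\cat C\simeq\prod_{G/H}\cat C$ is the naturality square of $f$ against the diagonal $\Delta\colon(-)\to\prod_{G/H}(-)$. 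The Beck--Chevalley transformation is then the canonical comparison $f(\prod_{G/H} c)\to\prod_{G/H} f(c)$ for finite products, an equivalence precisely because $f$ preserves finite products. Once this is in place, the rest is bookkeeping: \Cref{cor:adjoints-of-fixed-points} (applied with the right-adjoint variant) upgrades the right adjointability through $(-)^{hG}$, and the identifications of the corners with $\cat C^{hH}$ and $\cat D^{hH}$ finish the argument.
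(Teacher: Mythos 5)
Your proof is correct and follows essentially the same route as the paper: both identify the square as $(-)^{hG}$ applied to the naturality square of $f$ against the $\mathrm{Res}$--$\mathrm{CoInd}$ unit, check right adjointability on underlying categories (where the Beck--Chevalley map is the comparison $f(\prod_{G/H} c)\to\prod_{G/H} f(c)$, an equivalence because $f$ preserves finite products), and invoke \cite[Corollary 4.7.4.18]{HA} to pass through the $BG$-limit. Your step three, using the analogue of \Cref{lem:groupoid-core} to verify adjointability objectwise in $\Fun(BG,\Cat_\infty)$, simply makes explicit a detail the paper leaves implicit.
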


\begin{proof}
Recall that the above square is equivalently given by taking $G$-fixed points on the naturality square for the restriction-coinduction unit map: 
\[\begin{tikzcd}
	\cat C & \cat D \\
	{\CoInd_H^G\res_H^G \cat C} & {\CoInd_H^G\res_H^G \cat D.}
	\arrow["f", from=1-1, to=1-2]
	\arrow["\mathrm{counit}"',from=1-1, to=2-1]
	\arrow["\mathrm{counit}", from=1-2, to=2-2]
	\arrow[from=2-1, to=2-2]
\end{tikzcd}\]
It follows from \cite[Corollary 4.7.4.18]{HA} that if this square is underlying vertically right adjointable, then it remains vertically right adjointable after taking homotopy $G$-fixed points. However, vertical right adjointability of this square is equivalent to the statement that $f$ preserves $G/H$-indexed products. 
\end{proof}

\section{Isotropy separation with coefficients}\label{sec-G-spectra}
In this section we use Theorem \ref{thm-pullback} to establish a symmetric monoidal variant of isotropy separation in equivariant homotopy theory, see \Cref{thm:isotropy_separation}. We point out that our \Cref{cor:isotropy_separation-compact} is a generalization of \cite[Theorem 3.11]{Krause20} to any commutative algebra in $G$-spectra. 

\begin{Not}
Fix a finite group $G$ and a possibly empty family $\cat F$ of subgroups of $G$. We denote by $\Sp_{G}/\cat F$ the $\infty$-category of genuine $G$-spectra with geometric isotropy outside $\cat F$: this is the $\infty$-category of local objects in $\Sp_G$ with respect to the collection 
\[
\K_{\cat F}=\{G/H_+ \mid H \in \cat F\}\subseteq \Sp_G^\omega
\]
where we omitted the suspension spectrum from the notation. Equivalently this is the smashing localization of $\Sp_G$ corresponding to the idempotent commutative algebra $\widetilde{E}\cat F$ so that 
\[
\Sp_{G}/\cat F \simeq \Mod_{\Sp_G}(\widetilde{E}\cat F)
\]
and the functor 
\[
\widetilde{E}\cat F \otimes - \colon \Sp_G \to \Mod_{\Sp_G}(\widetilde{E}\cat F)\simeq \Sp_G/\cat F
\]
is $\K_{\cat F}$-localization. 
\end{Not}

\begin{Not}
Fix coefficients $R\in\CAlg(\Sp_G)$ and consider the base-change functor $R\otimes - \colon \Sp_G \to \Mod_{\Sp_G}(R)$ which is an internal left adjoint in $\Mod_{\Sp_G}(\PrL)$; this follows from \cite[Theorem 1.3]{BalmerAmbrogioSander2016} as we are in the rigidly-compactly generated setting. We set
\[
\Mod_{\Sp_G}(R)/\cat F:= \Mod_{\Sp_G}(R) \otimes_{\Sp_G} \Sp_G/\cat F.
\] 
\end{Not}

Let us record two important remarks about this Lurie tensor product. 

\begin{Rem}\label{rem-base-change}
     We claim that there are equivalences 
     \[
     \Mod_{\Sp_G}(R)/\cat F\simeq  \Mod_{\Sp_G/\cat F}(R\otimes\widetilde E\cat F)\simeq \Mod_{\Sp_G}(R\otimes\widetilde E\cat F)
     \]
     giving other descriptions of the Lurie tensor product. The first equivalence uses that formation of module categories commutes with base-change (apply \cite[Theorem 4.8.4.6]{HA} with $\mathcal{C}:=\Sp_G$, $\mathcal{M}:=\Sp_{G}/\cat F$ and $A:=R$), and the $\Sp_G$-module structure on $\Sp_G/\cat F$. The last equivalence uses that any module $M$ over $R \otimes \widetilde{E} \cat F$ in $\Sp_G$ is automatically $\cat F$-local (being a retract of $M \otimes_R (R \otimes \widetilde{E}\cat F) \simeq M \otimes \widetilde{E}\cat F$), and so lives in $\Sp_G/\cat F$. 
\end{Rem}

\begin{Rem}\label{rem-base-change-dualizable}
   The $\infty$-category $\Mod_{\Sp_G}(R)/\cat F$ is dualizable in $\Mod_{\Sp_G}(\PrL)$ and so the base-change functor $R \otimes -\colon \Sp_G \to \Mod_{\Sp_G}(R)/\cat F$ preserves limits and colimits of $\Sp_G$-modules. This follows from \cite[Corollary D.7.7.4]{SAG}.  Indeed $\Sp_G$ is a locally rigid stable monoidal category in the sense of \cite[Definition D.7.4.1]{SAG} and $\Mod_{\Sp_G}(R)/\cat F$ is compactly assembled as it is compactly generated, see~\cite[Example 21.1.2.3]{SAG}.
\end{Rem}

\begin{Not}\label{not-situation}
    Fix a finite group $G$, a family $\cat F$ of subgroups of $G$, and a subgroup $K\subseteq G$ such that $K\notin\cat F$, but for all proper subgroups $K'\subsetneq K$ we have $K'\in\cat F$. By a slight abuse of notation, we write $\cat F\cup K$ for the smallest family of subgroups of $G$ containing $\cat F$ and $K$. Fix $R \in \CAlg(\Sp_G)$ and set
\[
\K_R:= \{R \otimes G/K_+\otimes \widetilde{E}\cat F\}\subseteq (\Mod_{\Sp_G}(R)/\cat F)^\omega.
\]
\end{Not}

After this preparation we can start to connect to our main result. 

\begin{Lem}\label{lem-local-object}
     Suppose we are in the setting of \Cref{not-situation}. Then base-changing along the map $R\otimes\widetilde E\cat F \to R\otimes\widetilde E(\cat F\cup K)$ in $\CAlg(\Sp_G)$ yields a functor 
    \[
    L_{\cat F\cup K}\colon \Mod_{\Sp_G}(R)/\cat F \to \Mod_{\Sp_G}(R)/\cat F\cup K
    \]
    which is a $\K_R$-localization functor.
\end{Lem}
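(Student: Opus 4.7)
The plan is to deduce the statement from \Cref{prop-base-change-torsion-local}. I would set $\K_{\cat F}:=\{G/K_+\otimes \widetilde{E}\cat F\}\subseteq (\Sp_G/\cat F)^\omega$ and consider the symmetric monoidal base-change functor $f:=R\otimes (-)\colon \Sp_G/\cat F\to \Mod_{\Sp_G}(R)/\cat F$, for which $f(\K_{\cat F})=\K_R$ by construction. Since $\Sp_G/\cat F$ is rigidly-compactly generated (being a smashing localization of $\Sp_G$), the results of \cite{BalmerAmbrogioSander2016} ensure that the right adjoint of $f$ preserves colimits, so the hypotheses of \Cref{prop-base-change-torsion-local} are met.

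Next I would identify the $\K_{\cat F}$-localization of $\Sp_G/\cat F$ with the canonical projection $\Sp_G/\cat F\to\Sp_G/\cat F\cup K$. For this, note that $G$-orbits are self-dual in $\Sp_G$, so an object $M\in \Sp_G/\cat F$ is $\K_{\cat F}$-local iff $G/K_+\otimes M\simeq 0$, which together with $\cat F$-locality means exactly $M\in \Sp_G/\cat F\cup K$. Moreover $L_{\K_{\cat F}}$ is smashing and given by $\widetilde{E}(\cat F\cup K)\otimes(-)$, using the identity $\widetilde{E}(\cat F\cup K)\otimes\widetilde E\cat F\simeq \widetilde{E}(\cat F\cup K)$, which follows since the geometric fixed points of $\widetilde{E}(\cat F\cup K)$ vanish on $\cat F\subseteq \cat F\cup K$.

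Applying \Cref{prop-base-change-torsion-local} now yields that the base-change of $L_{\K_{\cat F}}$ along $f$ is the $\K_R$-localization of $\Mod_{\Sp_G}(R)/\cat F$. Using the behaviour of the Lurie tensor product together with \Cref{rem-base-change}, the target identifies as
\[
\Mod_{\Sp_G}(R)/\cat F\otimes_{\Sp_G/\cat F}\Sp_G/(\cat F\cup K)\simeq \Mod_{\Sp_G}(R\otimes\widetilde{E}(\cat F\cup K))\simeq \Mod_{\Sp_G}(R)/(\cat F\cup K),
\]
and the base-changed functor is precisely the one induced by base-change along $R\otimes\widetilde E\cat F\to R\otimes\widetilde E(\cat F\cup K)$, namely $L_{\cat F\cup K}$, concluding the proof. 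The main technical step is the preliminary identification of $\K_{\cat F}$-locals with $\Sp_G/\cat F\cup K$ together with the smashing formula $L_{\K_{\cat F}}\simeq \widetilde E(\cat F\cup K)\otimes(-)$; these rely on standard equivariant manipulations, after which the genericity of \Cref{prop-base-change-torsion-local} does all the heavy lifting.
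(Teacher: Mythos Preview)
Your proposal is correct and follows essentially the same strategy as the paper: establish the case $R=\mathbb S_G$ first and then apply \Cref{prop-base-change-torsion-local} along the base-change $\Sp_G/\cat F\to\Mod_{\Sp_G}(R)/\cat F$. The only cosmetic difference is that you identify the $\K_{\cat F}$-localization by characterizing the local objects directly via self-duality of $G/K_+$, whereas the paper instead verifies the equality of torsion ideals $\loct{\widetilde E\cat F\otimes E(\cat F\cup K)_+}=\loct{G/K_+\otimes\widetilde E\cat F}$; both arguments hinge on the same observation that $(\cat F\cup K)\setminus\cat F$ consists precisely of the $G$-conjugates of $K$.
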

\begin{proof}
We first claim that the functor 
\[
\Sp_G/\cat F \simeq \Mod_{\Sp_G}(\widetilde{E}\cat F)\xrightarrow{\widetilde{E}(\cat F \cup K) \otimes_{\widetilde{E}\cat F}}- \Mod_{\Sp_G}(\widetilde{E}(\cat F \cup K))\simeq\Sp_G/\cat F \cup K
\]
annihilates $\loct{G/K_+ \otimes \widetilde{E}\cat F}$, and so it is a $\K_{\mathbb{S}_G}$-localization. The kernel of this functor is $
\loct{E(\cat F\cup K)_+\otimes\widetilde E\cat F}$, namely those $G$-spectra with geometric isotropy contained in $\cat F\cup K$. To prove the claim we need to verify the equality
\[
\loct{\widetilde E\cat F\otimes E(\cat F\cup K)_+}=\loct{ G/K_+\otimes\widetilde E\cat F}.
\]
The containment from right to left follows from the observation that the map $\widetilde E\cat F\otimes E(\cat F\cup K)_+\otimes G/K_+ \to\widetilde E\cat F\otimes G/K_+$ is an equivalence, as one can verify using geometric fixed points. The other containment follows from the observation that 
\[
E(\cat F\cup K)_+ \in \loct{G/H_+ \mid H \in \cat F\cup K } 
\]
by noting that $G/H_+ \otimes\widetilde{E}\cat F\simeq 0$ unless $H$ is $G$-conjugate to $K$. This prove the claim and so the lemma in the case $R=\mathbb{S}_G$, the sphere $G$-spectrum. 

To get the general claim, we base-change the functor in the previous paragraph along $R \otimes - \colon \Sp_G\to\Mod_{\Sp_G}(R)$ and apply \Cref{prop-base-change-torsion-local} to deduce that
\[
\begin{tikzcd}
\Mod_{\Sp_G}(R)/\cat F \ar[r, "L_{\cat F\cup K}"] & \Mod_{\Sp_G}(R)/\cat F\cup K.  
\end{tikzcd}
\]
is a $\K_{R}$-localization functor. 
\end{proof}

Let $\Phi^K\colon\Sp_G\to \Sp$ denote the symmetric monoidal functor of geometric $K$-fixed points, and let 
\[ \Phi_R^K \colon \Mod_{\Sp_G}(R) \to \Mod(\Phi^K R)\]
be its $R$-linear extension. 
We will recall below how the $\mathbb E_\infty$-ring $\Phi^K(R)$ is acted on by the Weyl group $W_G(K)$, allowing us to form 
\[
\Mod(\Phi^K(R))^{hW_G(K)}\in \CAlg(\mathrm{Pr}^L_{\mathrm{st}})
\]
as in \Cref{def-hG-modA}. For the next result recall \Cref{not-induced}.

\begin{Prop}\label{prop-completion}
   Suppose we are in the setting of \Cref{not-situation}. Then $\Phi_R^K$ factors through some refined variant $\widetilde\Phi_R^K$ which remembers the Weyl-group action, as follows:
\[ 
\begin{tikzcd}[column sep=large]
 \Phi_R^K\colon  \Mod_{\Sp_G}(R) \ar[r, "\widetilde \Phi_R^K"]  & \Mod(\Phi^K(R))^{hW_G(K)} \ar[r, "\res_e^{W_G(K)}"] &  \Mod(\Phi^K(R)).
\end{tikzcd}
\]
Furthermore, the restriction of the refined $R$-linear $K$-geometric fixed points functor
   \[
   \widetilde{\Phi}_R^K \colon \Mod_{\Sp_G}(R)/\cat F \to\Mod(\Phi^K R)^{h W_G(K)}
   \]
   is a $\K_R$-completion functor and satisfies 
   \[ 
   \widetilde{\Phi}_R^K(R\otimes G/K_+\otimes\widetilde E\cat F)\simeq W_G(K) \circledast \Phi^K(R).
   \]
\end{Prop}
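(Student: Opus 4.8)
The plan is to first upgrade the geometric fixed points functor $\Phi^K\colon \Sp_G\to\Sp$ to the Weyl-equivariant functor $\widetilde\Phi^K_R$ by recalling the construction of the $W_G(K)$-action on $\Phi^K(R)$: the functor $\Phi^K$ factors through $\Sp_{N_G(K)}$ and the residual $W_G(K) = N_G(K)/K$-action on $\Phi^K$ of any $N_G(K)$-spectrum, in particular on $\Phi^K(R)$, gives $\Phi^K(R)\in\Fun(BW_G(K),\CAlg)$; then $R$-linearity produces $\widetilde\Phi^K_R\colon \Mod_{\Sp_G}(R)\to\Mod(\Phi^K R)^{hW_G(K)}$, and composing with $\res^{W_G(K)}_e$ recovers $\Phi^K_R$ by construction. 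The next step is to restrict $\widetilde\Phi^K_R$ along the localization $\Mod_{\Sp_G}(R)/\cat F = \Mod_{\Sp_G}(R\otimes\widetilde E\cat F)$; this makes sense because $\Phi^K$ of any object with geometric isotropy outside $\cat F$ is unchanged after smashing with $\widetilde E\cat F$ (since $K\notin\cat F$, so $\Phi^K(\widetilde E\cat F)\simeq\unit$), and the $W_G(K)$-action is compatible.

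The heart of the proof is to identify $\widetilde\Phi^K_R$ on $\Mod_{\Sp_G}(R)/\cat F$ with the $\K_R$-completion functor $\Lambda_{\K_R}$. By \Cref{lem-properties-torsion-local}, $\Lambda_{\K_R}$ is the unique symmetric monoidal localization whose kernel is the category of $\K_R$-local objects, equivalently (after identifying complete with torsion via \Cref{lem:cpltors}) is characterized by being symmetric monoidal, sending the generator $\K_R$ to a dualizable object generating the target, and having the right kernel. Concretely I would proceed by: (i) checking $\widetilde\Phi^K_R$ is symmetric monoidal and colimit-preserving (it is, being built from $\Phi^K$ and $\Ind$/base-change); (ii) computing $\widetilde\Phi^K_R$ on the generator and showing $\widetilde\Phi^K_R(R\otimes G/K_+\otimes\widetilde E\cat F)\simeq W_G(K)\circledast\Phi^K(R)$ — here one uses that $\Phi^K(G/K_+)\simeq W_G(K)_+$ as a $W_G(K)$-spectrum (the geometric fixed points of the $G$-orbit $G/K$ is the Weyl-group-many points $N_G(K)/K$), and that the induction functor $W_G(K)\circledast(-)$ from \Cref{not-induced} is exactly the left adjoint to $\res^{W_G(K)}_e$, so this reads off from the non-equivariant computation $\Phi^K(R\otimes G/K_+)\simeq \Phi^K(R)\otimes W_G(K)_+$ together with the identification of the $W_G(K)$-semilinear structure; (iii) identifying the kernel of $\widetilde\Phi^K_R$ with $\K_R$-local objects, i.e. $\ker(\widetilde\Phi^K_R) = (\Mod_{\Sp_G}(R)/\cat F)^{\K_R\text{-}\loc} = \Mod_{\Sp_G}(R)/(\cat F\cup K)$; this is where \Cref{lem-local-object} feeds in, since the kernel of $\Phi^K$ on $\cat F$-local $G$-spectra is precisely the $(\cat F\cup K)$-local ones (those with geometric isotropy outside $\cat F\cup K$), and base-change along $R\otimes-$ preserves this by \Cref{prop-base-change-torsion-local}.

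Having (i)–(iii), the universal property of the completion functor in \Cref{cpl-stable-hom-theory} (any symmetric monoidal colimit-preserving functor killing $\K_R$-torsion — equivalently inverting the $\K_R$-completion equivalences — factors uniquely through $\Lambda_{\K_R}$) together with the dual universal property will force $\widetilde\Phi^K_R\simeq\Lambda_{\K_R}$ up to canonical equivalence; more carefully, one shows $\widetilde\Phi^K_R$ exhibits its target as the Verdier quotient (in $\CAlg(\PrL)$) of $\Mod_{\Sp_G}(R)/\cat F$ by $\Mod_{\Sp_G}(R)/(\cat F\cup K)$, which by \Cref{lem-properties-torsion-local}(c) is the complete category. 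The main obstacle I anticipate is step (ii)/(iii): correctly setting up the $W_G(K)$-semilinear structure on $\Phi^K(R\otimes G/K_+\otimes\widetilde E\cat F)$ and verifying it matches $W_G(K)\circledast\Phi^K(R)$ as an object of $\Mod(\Phi^K R)^{hW_G(K)}$ — this requires unwinding the equivalence $\Phi^K(G/K_+)\simeq W_G(K)_+$ equivariantly and checking the induced $A$-semilinear structure agrees with the abstract adjunction $\res^{W_G(K)}_e\dashv (-)^{\circledast W_G(K)}$, left adjoint $W_G(K)\circledast(-)$, from \Cref{not-induced}, together with confirming that geometric fixed points genuinely detects $(\cat F\cup K)$-locality after passing to $R$-modules, for which \Cref{prop-base-change-torsion-local} and \Cref{lem-local-object} do the bookkeeping.
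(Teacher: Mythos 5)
Your high-level strategy is different from the paper's, and the difference matters: you try to construct $\widetilde\Phi^K_R$ by hand (via the factorization of $\Phi^K$ through $\Sp_{N_G(K)}$) and then \emph{verify} that it is the $\K_R$-completion; the paper instead \emph{starts} with a functor already known to be a completion, namely the limit
\[
\Sp_G\longrightarrow\lim_{G/H\in\mathcal O_{\cat F\cup K}(G)}\Sp_H,
\]
which is the $\K_{\cat F\cup K}$-completion by \cite[Theorem 6.27]{MNN}, base-changes it along $\Sp_G\to\Mod_{\Sp_G}(R)/\cat F$ (invoking \Cref{prop-base-change-torsion-local} to stay a completion), and only \emph{then} identifies the target with $\Mod(\Phi^K R)^{hW_G(K)}$ by observing that the diagram is right Kan extended from the full subcategory $BW_G(K)\subseteq\mathcal O_{\cat F\cup K}(G)$. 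This direction of reasoning sidesteps the hardest step of your argument.

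The gap in your proposal is precisely that step: having checked (i) symmetric monoidality, (ii) the image of the generator, and (iii) the kernel, you conclude that $\widetilde\Phi^K_R\simeq\Lambda_{\K_R}$. But (i)–(iii) are necessary, not sufficient. What you actually get from (iii) is a factorization $\widetilde\Phi^K_R\simeq g\circ\Lambda_{\K_R}$ for a unique colimit-preserving symmetric monoidal $g\colon(\Mod_{\Sp_G}(R)/\cat F)^{\K_R\text{-}\cpl}\to\Mod(\Phi^K R)^{hW_G(K)}$, and you still must show $g$ is an equivalence. Knowing that $g$ carries a compact generator to a compact generator gives essential surjectivity, but full faithfulness requires that $g$ induces an equivalence on the endomorphism algebra of that generator (a Morita-type argument), and you never verify this. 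You say ``one shows $\widetilde\Phi^K_R$ exhibits its target as the Verdier quotient'' but give no mechanism. Also, your statement of the universal property — that a functor ``killing $\K_R$-torsion, equivalently inverting the $\K_R$-completion equivalences, factors uniquely through $\Lambda_{\K_R}$'' — conflates torsion with local: killing torsion objects characterizes factoring through $L$, while it is killing \emph{local} objects that characterizes factoring through $\Lambda$. Finally, your identification of the generator's image via the pointwise equivalence $\Phi^K(G/K_+)\simeq W_G(K)_+$ genuinely does require the semilinear bookkeeping you flag as an obstacle; the paper avoids it by computing with the right adjoint $C_K^G$ and the projection formula, which produces the identification $R\otimes G/K_+\otimes\widetilde E\cat F\simeq C_K^G(\Phi^K R)$ directly and then reads off $\widetilde\Phi^K_R(C_K^G(\Phi^K R))\simeq W_G(K)\circledast\Phi^K R$ from the adjunction triangle. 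I would encourage you to follow the paper's direction: establish the completion property first via MNN and base-change, and postpone the identification of the target.
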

\begin{proof}
We start by reviewing some material from \cite[Subsection 6.5]{MNN}, referring the reader there for more details. Write $\mathcal{O}(G)$ for the category of non-empty and transitive $G$-sets and $\mathcal{O}_{\cat F\cup K}(G)\subseteq{\mathcal O}(G)$ for the full subcategory of objects with isotropy in $\cat F \cup K$. There is a functor
\begin{equation}\label{eq:diagram}
\mathcal{O}(G)^{\mathrm{op}}\to \CAlg(\PrL), \quad  G/H\mapsto\Mod_{\Sp_G}(\mathbb D(G/H_+))\simeq\Sp_H
\end{equation}
with transition maps given by (twisted) restrictions.  
Since $*\in\mathcal{O}(G)$ is final, (\ref{eq:diagram})
determines a symmetric monoidal left adjoint
\begin{equation}\label{eq:completion_map}
\Sp_G\to\lim\limits_{G/H\in\mathcal{O}_{\cat F\cup K}(G)}\Sp_H
\end{equation}
which is a $\K_{\cat F\cup K}$-completion functor for $\K_{\cat F \cup K}=\{G/H_+ \mid H \in\cat F \cup K\}\subseteq \Sp_G^\omega$, see~\cite[Theorem 6.27]{MNN}.  Base-changing along $R \otimes \widetilde{E}\cat F \otimes - \colon \Sp_G\to\Mod_{\Sp_G}(R)/{\cat F}$ we obtain a functor 
\begin{equation}\label{completion}
 \Mod_{\Sp_G}(R)/\cat F  \to \Mod_{\Sp_G}(R)/\cat F\otimes_{\Sp_G}\left(\lim\limits_{G/H\in\mathcal{O}_{\cat F\cup K}(G)}\Sp_H\right)
\end{equation}
which, by \Cref{prop-base-change-torsion-local}, is a completion functor with respect to the collection
\[
\{R \otimes \widetilde{E} \cat F \otimes G/H_+ \mid H \in \cat F \cup K \}.
\]
However note that if $H \in \cat F \cup K$ and $H$ is not $G$-conjugate to $K$, then $\widetilde{E}\cat F \otimes G/H_+ \simeq 0$, so the above collection collapses to 
\[
\K_{R}=\{R \otimes \widetilde{E}\cat F \otimes G/K_+\}.
\]
In other words, we are only left to identify the functor (\ref{completion}) with $\widetilde{\Phi}^K_R$. To this end, we have the following string of equivalences:
\begin{align*}
    \Mod_{\Sp_G}(R)/\cat F \otimes_{\Sp_G} (\lim\limits_{G/H\in\mathcal{O}_{\cat F\cup K}(G)}\Sp_H) &\simeq \lim\limits_{G/H \in \mathcal{O}_{\cat F\cup K}} (\Mod_{\Sp_G}(R)/\cat F\otimes_{\Sp_G}\Sp_H) \\
    & \simeq \lim\limits_{G/H \in \mathcal{O}_{\cat F\cup K}} (\Mod_{\Sp_G}(R\otimes \widetilde{E}\cat F) \otimes_{\Sp_G}\Sp_H) \\
    & \simeq \lim\limits_{G/H\in\mathcal{O}_{\cat F\cup K}(G)} \Mod_{\Sp_H}(\mathrm{Res}^G_H(R\otimes\widetilde E{\cat F})).
\end{align*}
using dualizability of $\Mod_{\Sp_G}(R)/\cat F$ for the first equivalence (\Cref{rem-base-change-dualizable}), \Cref{rem-base-change} for the second equivalence, and finally the fact that module categories behaves well under base-changed \cite[Theorem 4.8.4.6]{HA} for the last one. 
We observe that $\mathrm{Res}^G_H(R\otimes\widetilde E{\cat F})$ is contractible for $H\in\cat F$ and equivalent to $\mathrm{Res}^G_H(R)\otimes\widetilde E{\cat P}_K$ for $H$ conjugate to $K$ (here ${\cat P}_K$ denotes the family of proper subgroups of $K$). This easily implies that the above limit is over a functor which is right Kan extended from the full subcategory $BW_G(K)\subseteq{\mathcal O}_{{\cat F}\cup K}(G)$ spanned by $G/K$. Hence the target further simplifies as 
\[ 
\lim\limits_{G/H\in\mathcal{O}_{\cat F\cup K}(G)} \Mod_{\Sp_H}(\mathrm{Res}^G_H(R\otimes\widetilde E{\cat F}))\simeq\lim\limits_{BW_G(K)} \Mod_{\Sp_K}(\mathrm{Res}^G_K(R)\otimes \widetilde E{\mathcal P}_K).
\]
Since the categorical $K$-fixed points yields an equivalence
\[ 
(-)^K\colon\Mod_{\Sp_K}(\mathrm{Res}^G_K(R)\otimes \widetilde E{\mathcal P}_K)\simeq\Mod(\Phi^K (R))
\]
and the $K$-geometric fixed points on $G$ are by definition given as
\[ \Phi^K:=(-)^K\circ(-\otimes\widetilde E\mathcal{P}_K)\circ\mathrm{Res}^G_K,\]
we conclude that $\Phi_R^K$ refines to a functor 
\[
\widetilde{\Phi}^K_R \colon \Mod_{\Sp_G}(R)/\cat F \to \Mod(\Phi^K (R))^{hW_G(K)}
\]
which is a $\K_{R}$-completion functor. For the final claim, we consider the following diagram of adjunctions
\[
\begin{tikzcd}
  \Phi^K_R \colon  \Mod_{\Sp_G}(R)/\cat F \ar[rr, shift left = 0.5 ex, "\widetilde\Phi^K_R"] & & \ar[ll, shift left = 0.5 ex,  hook, "a" ] \Mod(\Phi^K (R))^{hW_G(K)} \ar[rr, shift left = 0.5 ex, "\res_e^{W_G(K)}"] & & \ar[ll, shift left = 0.5 ex, "W_G(K) \circledast (-)"] \Mod(\Phi^K( R)): C_K^G
\end{tikzcd}
\]
where $a$ is fully faithful as $\widetilde{\Phi}_R^K$ is a completion functor. This easily implies that $\widetilde\Phi^K_R\circ C^G_K\simeq W_G(K)\circledast(-)$. We claim that $R \otimes G/K_+ \otimes \widetilde{E}\cat F \simeq C_K^G(\Phi^K (R))$ which will conclude the proof of the proposition since then 
\begin{align*}
\widetilde\Phi^K_R(R\otimes G/K_+\otimes \widetilde E{\cat F}) & \simeq \widetilde\Phi^K_R(C^G_K(\Phi^K (R)))\\
& \simeq W_G(K)\circledast \Phi^K(R).\\
\end{align*}
To prove the claim we look at the following commutative diagram of adjunctions
\[
\begin{tikzcd}[column sep=1.7 cm]
  \Mod_{\Sp_G}(R)/\cat F \arrow[d,"\sim"] \arrow[drr, bend left, "\Phi^K"] & & \\
  \Mod_{\Sp_G}(R \otimes \widetilde{E}\cat F) \arrow[r, shift left=1.2, "\res^G_K"] & \Mod_{\Sp_K}(\res^G_K(R) \otimes \widetilde{E}\cat P_K) \arrow[l, shift left=1.2, "\CoInd_K^G"] \arrow[r, shift left=1.2, "(-)^K","\sim"'] & \Mod(\Phi^K (R)) \arrow[l, shift left=1.2]\arrow[dll, bend left, "C_K^G"]  \\
  \Mod_{\Sp_G}(R)/\cat F \arrow[u,"\sim"] & & 
\end{tikzcd}
\] 
Since $(\res^G_K(R) \otimes \widetilde{E}\cat P_K)^K \simeq \Phi^K (R)$, the commutativity of the above diagram and the projection formula implies that 
\begin{align*}
C_K^G(\Phi^K(R))& \simeq \CoInd_K^G(\res_K^G(R) \otimes \widetilde{E}\cat P_K)\\
 & \simeq R \otimes \widetilde{E}\cat F \otimes \mathbb{D}(G/K_+) 
 \end{align*}
proving the remaining claim.
\end{proof}
We are finally ready to state and prove our symmetric monoidal variant of isotropy separation. 
\begin{Thm}\label{thm:isotropy_separation}
Suppose we are in the setting of \Cref{not-situation}. Then we have a pullback in $\CAlg(\Pr^L_{\mathrm{st}})$
\[
\begin{tikzcd}
   \Mod_{\Sp_G}(R)/\cat F \arrow[dd,"\mathrm{Ind}((\widetilde{\Phi}_R^K)^\omega)"'] \ar[r," L_{\cat F\cup K}"] 
   &   \Mod_{\Sp_G}(R)/\cat F\cup K \arrow[dd]\\
    & \\
    \Ind(\Perf(\Phi^K(R))^{hW_G(K)})\arrow[r, two heads]& \frac{\Ind(\Perf(\Phi^K(R))^{hW_G(K)})}{\loct{ W_G(K)\circledast \Phi^K(R)}}.
\end{tikzcd}
\]
where the bottom functor is the quotient functor and the right vertical functor is induced by $\mathrm{Ind}((\widetilde{\Phi}_R^K)^\omega)$ and the universal property of $L_{\cat F \cup K}$.
\end{Thm}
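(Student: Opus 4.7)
The plan is to apply \Cref{thm-pullback} to $\CC := \Mod_{\Sp_G}(R)/\cat F$ with $\K := \K_R$, and then identify the four corners of the pullback square (\ref{supersquare}) with those in the statement. The substantive content has been packaged into \Cref{lem-local-object} and \Cref{prop-completion}; what remains is verifying the rigidity hypothesis on $\CC$ and matching up the identifications.

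First, I would verify that $\CC$ is rigidly-compactly generated. Since $\Sp_G$ is rigidly-compactly generated, so is $\Mod_{\Sp_G}(R)$: the unit $R$ is compact, the generators $R \otimes G/H_+$ are $R$-dualizable as base-changes of $G$-dualizables, and conversely any $R$-dualizable $M$ is compact since passage to the $R$-linear dual together with compactness of $R$ shows that $\Map_R(M,-)$ preserves filtered colimits. Applying the same reasoning with $R$ replaced by $R \otimes \widetilde{E}\cat F$, together with the identification $\Mod_{\Sp_G}(R)/\cat F \simeq \Mod_{\Sp_G}(R \otimes \widetilde{E}\cat F)$ from \Cref{rem-base-change}, then yields rigid-compact generation of $\CC$.

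For the corners of (\ref{supersquare}): \Cref{lem-local-object} identifies $L$ with $L_{\cat F \cup K}$ and $\CC^{\K_R\text{-}\loc}$ with $\Mod_{\Sp_G}(R)/\cat F \cup K$. \Cref{prop-completion} identifies $\Lambda$ with $\widetilde{\Phi}^K_R$ and, through its fully faithful right adjoint, $\CC^{\K_R\text{-}\cpl}$ with $\Mod(\Phi^K(R))^{hW_G(K)}$ symmetric monoidally. Taking dualizable objects gives $(\CC^{\K_R\text{-}\cpl})^{\dual} \simeq \Perf(\Phi^K(R))^{hW_G(K)}$, so $\widehat{\CC}_{\K_R} \simeq \Ind(\Perf(\Phi^K(R))^{hW_G(K)})$ and $F \simeq \Ind((\widetilde{\Phi}^K_R)^\omega)$. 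The same proposition yields $F(\K_R) = W_G(K) \circledast \Phi^K(R)$, so the bottom-right vertex of (\ref{supersquare}) is the Verdier quotient of $\Ind(\Perf(\Phi^K(R))^{hW_G(K)})$ by $\loct{W_G(K) \circledast \Phi^K(R)}$, with $\hat L$ the canonical quotient functor. Feeding these identifications into \Cref{thm-pullback} produces the diagram in the statement. No step is a real obstacle: the heavy machinery is already in place in \Cref{thm-pullback}, \Cref{lem-local-object}, and \Cref{prop-completion}, and what remains is essentially bookkeeping.
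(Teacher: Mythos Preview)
Your proposal is correct and follows essentially the same approach as the paper: apply \Cref{thm-pullback} to $\CC=\Mod_{\Sp_G}(R)/\cat F$ with $\K=\K_R$, then identify the corners via \Cref{lem-local-object} and \Cref{prop-completion}. You even supply a bit more detail than the paper's proof (the verification of rigid-compact generation and the explicit tracking of $F$ and $F(\K_R)$), but the argument is the same.
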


\begin{proof}
    We apply \Cref{thm-pullback} to $\CC:=\Mod_{\Sp_G}(R)/\cat F$ and $\K:=\K_R$. The top horizontal arrow in the above diagram agrees with the $\K_R$-localization functor by \Cref{lem-local-object}. Also we can identify the left vertical arrow in the above square using \Cref{prop-completion}. Finally, we can identify the bottom right vertex of the square using the computational statement in \Cref{prop-completion} and the fact that the category of local objects identifies with the Verdier quotient of $ \Ind(\Perf(\Phi^K(R))^{hW_G(K)})$ by the torsion objects.
\end{proof}
We also deduce two ``small'' versions of the previous pullback square. 

\begin{Cor}\label{cor:isotropy_separation-compact}
Suppose we are in the setting of \Cref{not-situation}. Then we have a pullback in $2$-rings:
\[
\begin{tikzcd}
    (\Mod_{\Sp_G}(R)/\cat F)^\omega \arrow[d,"(\widetilde{\Phi}_R^K)^\omega"'] \ar[r,"L^\omega_{\cat F\cup K}"] 
   &   (\Mod_{\Sp_G}(R)/\cat F\cup K )^\omega\arrow[d]\\
    \Perf(\Phi^K(R))^{hW_G(K)}\arrow[r]& (\frac{\Perf(\Phi^K(R))^{hW_G(K)}}{\thickt{W_G(K)\circledast \Phi^K(R)}})^{\natural}
\end{tikzcd}
\]
where unlabeled functors are obtained from those of \Cref{thm:isotropy_separation} by passing to compact objects. 
\end{Cor}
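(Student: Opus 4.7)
My plan is to deduce this corollary from the pullback of \Cref{thm:isotropy_separation} by applying the compact-objects functor $(-)^\omega$, along the lines of the proof of \Cref{pullback-small}. The underlying principle is that $\Ind$ embeds the $\infty$-category of $2$-rings of dualizable objects fully faithfully into $\CAlg(\PrL)$ with essential image the rigidly-compactly generated categories and compact-preserving morphisms; its inverse $(-)^\omega$ consequently preserves pullbacks.

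Thus the proof reduces to two verifications. First, all four categories in the pullback of \Cref{thm:isotropy_separation} should be rigidly-compactly generated. For the top row, $\Mod_{\Sp_G}(R)/\cat F\simeq\Mod_{\Sp_G}(R\otimes\widetilde{E}\cat F)$ (\Cref{rem-base-change}), and analogously for $\cat F\cup K$, is rigidly-compactly generated because its unit is compact (as $\unit\in\Sp_G$ is), and its compacts are generated as a thick tensor ideal by objects $R\otimes\widetilde{E}\cat F\otimes G/H_+$, which are dualizable as tensor products of the unit with dualizables from $\Sp_G$. The bottom-left is rigidly-compactly generated by construction, and the bottom-right inherits this as a smashing localization (\Cref{rem-local}). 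Second, all four functors should preserve compact objects: the horizontals are smashing localizations, the left vertical is $\Ind$-extended from a functor between small categories by definition, and the right vertical is induced by the universal property of the top localization and hence inherits compact preservation from the composite of the other three.

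With these two facts, $(-)^\omega$ applied to the pullback yields a pullback of $2$-rings. The only remaining identification is the bottom-right corner as the idempotent-completed Verdier quotient $(\Perf(\Phi^K(R))^{hW_G(K)}/\thickt{W_G(K)\circledast \Phi^K(R)})^{\natural}$, which follows from the Neeman--Thomason localization theorem applied to the cofibre sequence relating torsion, local and ambient categories, exactly as in the argument of \Cref{pullback-small}. The main technical obstacle is the verification of rigid-compact generation for $\Mod_{\Sp_G}(R)/\cat F$ when $R$ is not assumed dualizable in $\Sp_G$; everything else is an assembly of already-established ingredients.
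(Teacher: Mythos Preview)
Your proposal is correct and takes essentially the same approach as the paper, which simply says: ``This follows from \Cref{thm:isotropy_separation} by passing to compact objects (which in this case agree with the dualizable objects).'' Your write-up supplies the justifications the paper leaves implicit, in particular that all four corners are rigidly-compactly generated and all functors preserve compacts, and invokes Neeman--Thomason for the bottom-right corner exactly as in \Cref{pullback-small}.

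One remark on the ``obstacle'' you flag at the end: it is not a genuine issue. For any $R\in\CAlg(\Sp_G)$, the category $\Mod_{\Sp_G}(R)$ is rigidly-compactly generated regardless of whether $R$ is dualizable in $\Sp_G$. The unit of $\Mod_{\Sp_G}(R)$ is $R$ itself, and $\Map_R(R,-)\simeq\Map_{\Sp_G}(\unit_G,U(-))$ where $U$ is the colimit-preserving forgetful functor; since $\unit_G$ is compact this makes $R$ compact. The compact generators $R\otimes G/H_+$ are dualizable because the symmetric monoidal free functor sends dualizables to dualizables. Your own argument (``tensor products of the unit with dualizables from $\Sp_G$'') is already correct; you need not worry further. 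The same reasoning applies verbatim to the localization $\Mod_{\Sp_G}(R)/\cat F\simeq\Mod_{\Sp_G}(R\otimes\widetilde E\cat F)$.
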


\begin{proof}
    This follows from \Cref{thm:isotropy_separation} by passing to compact objects (which is this case they agree with the dualizable objects).
\end{proof}

\begin{Cor}\label{cor:isotropy_separation-compact}
Suppose we are in the setting of \Cref{not-situation}. Then we have a pullback of symmetric monoidal stable $\infty$-categories:
\[
\begin{tikzcd}
    (\Mod_{\Sp_G}(R)/\cat F)^\omega \arrow[d,"(\widetilde{\Phi}_R^K)^\omega"'] \ar[r,"L^\omega_{\cat F\cup K}"] 
   &   (\Mod_{\Sp_G}(R)/\cat F\cup K )^\omega\arrow[d]\\
    \Perf(\Phi^K(R))^{hW_G(K)}\arrow[r]& \frac{\Perf(\Phi^K(R))^{hW_G(K)}}{\thickt{W_G(K)\circledast \Phi^K(R)}}.
\end{tikzcd}
\]
\end{Cor}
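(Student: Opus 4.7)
The plan is to derive this corollary from Theorem~\ref{thm:isotropy_separation} by passing to compact objects throughout the pullback square in $\CAlg(\Pr^L_{\mathrm{st}})$, exactly as in the previous Corollary. The compact objects of the top corners are as displayed by construction. The compact objects of $\Ind(\Perf(\Phi^K(R))^{hW_G(K)})$ coincide with $\Perf(\Phi^K(R))^{hW_G(K)}$ itself, since the latter is already idempotent complete (being a limit of idempotent complete categories). By the Neeman--Thomason localization theorem, the compact objects of the Verdier quotient in the bottom right are the idempotent completion of $\Perf(\Phi^K(R))^{hW_G(K)}/\thickt{W_G(K) \circledast \Phi^K(R)}$.

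This yields the preceding Corollary in the $\infty$-category of $2$-rings. To obtain the present refinement, where the bottom right is the non-idempotent-completed Verdier quotient $\BB$ rather than its idempotent completion $\BB^\natural$, I would first observe that the inclusion $\BB \hookrightarrow \BB^\natural$ is fully faithful for any small stable $\infty$-category $\BB$, so the pullback with $\BB$ in the lower right embeds fully faithfully into the pullback with $\BB^\natural$. The two pullbacks then coincide provided the remaining two maps into the bottom right---the Verdier quotient functor from $\Perf(\Phi^K(R))^{hW_G(K)}$, and the functor from the top right induced by the universal property of $L^\omega_{\cat F\cup K}$---both factor through $\BB$. For the bottom-left-to-bottom-right map this is tautological; for the top-right-to-bottom-right map, the factorization holds on the dense subcategory of $(\Mod_{\Sp_G}(R)/\cat F\cup K)^\omega$ coming from $L^\omega_{\cat F\cup K}$ by construction, and can then be extended to all of the top right by exploiting compatibility with the lower-left path, which always lands in $\BB$.

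The main obstacle is precisely verifying the factorization through $\BB$ on objects of the top right that arise as retracts of objects in the image of $L^\omega_{\cat F\cup K}$ but are not themselves in that image. The key idea is to use the explicit pullback description of $(\Mod_{\Sp_G}(R)/\cat F)^\omega$ provided by Theorem~\ref{thm:isotropy_separation}: any such retract in the top right is matched with a compatible retract in the bottom-left corner $\Perf(\Phi^K(R))^{hW_G(K)}$, so the image of the retract in $\BB^\natural$ is forced to already lie in the image of $\BB$ via the Verdier quotient map. Taken together, these steps identify the pullback of the square as $(\Mod_{\Sp_G}(R)/\cat F)^\omega$.
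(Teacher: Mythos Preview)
Your overall strategy matches the paper's: start from the previous corollary (pullback with the idempotent-completed quotient $\BB^\natural$ in the bottom-right), observe that the inclusion $\BB\hookrightarrow\BB^\natural$ is fully faithful, and conclude that the square with $\BB$ has the same pullback. The paper does exactly this, drawing the diagram in which both legs to $\BB^\natural$ factor through $\BB$ and then invoking full faithfulness of the tilted inclusion.

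Where you diverge from the paper is in justifying why the right vertical map $(\Mod_{\Sp_G}(R)/\cat F\cup K)^\omega\to\BB^\natural$ factors through $\BB$. The paper's argument goes through the kernel: one has $\ker(L^\omega_{\cat F\cup K})=\thickt{R\otimes G/K_+\otimes\widetilde E\cat F}$, and \Cref{prop-completion} shows that $(\widetilde\Phi_R^K)^\omega$ carries this generator to $W_G(K)\circledast\Phi^K(R)$. Thus the composite to $\BB$ annihilates $\ker(L^\omega_{\cat F\cup K})$ and the universal property of the Verdier quotient produces the desired factorization through $(\Mod_{\Sp_G}(R)/\cat F\cup K)^\omega$.

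Your alternative route via ``matching retracts from the pullback'' is circular. The pullback description from the first corollary tells you that an object $y$ of the top-right lifts to $(\Mod_{\Sp_G}(R)/\cat F)^\omega$ \emph{if and only if} it can be paired with a compatible object of $\Perf(\Phi^K(R))^{hW_G(K)}$ over $\BB^\natural$. It does not tell you that every $y$ admits such a pairing; that would amount to essential surjectivity of $L^\omega_{\cat F\cup K}$, which is precisely what is at issue and which you have not established. So your last paragraph does not close the gap it identifies.
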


\begin{proof}
    Recall that $\ker(L_{\cat F\cup K}^\omega)=\thickt{R \otimes G/K_+ \otimes  \widetilde{E}\cat F}$, and so by \Cref{prop-completion} we get a factorization 
       \[
    \begin{tikzcd}
        (\Mod_{\Sp_G}(R)/\cat F)^\omega \arrow[d,"(\widetilde{\Phi}_R^K)^\omega"'] \ar[r,"L^\omega_{\cat F\cup K}"] 
   &   (\Mod_{\Sp_G}(R)/\cat F\cup K )^\omega\arrow[d] \arrow[ddr, bend left]& \\
    \Perf(\Phi^K(R))^{hW_G(K)}\arrow[r]\arrow[drr, bend right]& \frac{\Perf(\Phi^K(R))^{hW_G(K)}}{\thickt{W_G(K)\circledast \Phi^K(R)}} \arrow[dr, hook]& \\
     & & (\frac{\Perf(\Phi^K(R))^{hW_G(K)}}{\thickt{W_G(K)\circledast \Phi^K(R)}})^{\natural}
    \end{tikzcd}
    \]
  where the outer square is a pullback. Since the tilted arrow is fully faithful, we see that the pullback of the small square agrees with the pullback of the outer square.
\end{proof}

\bibliographystyle{alpha}
\bibliography{reference}

\end{document}